\DeclareSymbolFont{cyrletters}{OT2}{wncyr}{m}{n}
\DeclareMathSymbol{\Sha}{\mathalpha}{cyrletters}{"58}
\newtheorem{theorem}{Theorem}[section]
\newtheorem{lemma}[theorem]{Lemma}
\newtheorem{proposition}[theorem]{Proposition}
\newtheorem{corollary}[theorem]{Corollary}
\newtheorem{fact}[theorem]{Fact}
\newtheorem{question}[theorem]{Question}
\newtheorem{problem}[theorem]{Problem}
\theoremstyle{remark}
\newtheorem{remark}[theorem]{Remark}
\theoremstyle{definition}
\theoremstyle{conjecture}
\newtheorem{conjecture}[theorem]{Conjecture}
\theoremstyle{definition}
\newtheorem{example}[theorem]{Example}
\numberwithin{equation}{section}
\def\F{\mathbb{ F}}
\def\Q{\mathbb{ Q}}
\def\C{\mathbb{ C}}
\def\Z{\mathbb{ Z}}
\def\P{\mathbb{ P}}
\def\CC{\mathcal{C}}
\def\Gal{\textup{\mbox{Gal}}}
\def\Nm{\mbox{Nm}}
\def\Hom{\mbox{Hom}}
\def\mp{\mathfrak{p}}
\def\Jac{\textup{\mbox{Jac}}}
\def\End{\textup{\mbox{End}}}
\def\GL{\textup{\mbox{GL}}}
\def\SL{\textup{\mbox{SL}}}
\def\PGL{\textup{\mbox{PGL}}}
\def\PSL{\textup{\mbox{PSL}}}
\DeclareMathOperator{\Xsplit}{\emph{X}_{\text{s}}}
\DeclareMathOperator{\Xnonsplit}{\emph{X}_{\text{ns}}}
\DeclareMathOperator{\Jsplit}{\emph{J}_{\text{s}}}
\DeclareMathOperator{\Jnonsplit}{\emph{J}_{\text{ns}}}
\DeclareMathOperator{\Cs}{C_{\text{s}}}
\DeclareMathOperator{\Csplus}{C_{\text{s}}^+}
\DeclareMathOperator{\Cns}{C_{\text{ns}}}
\DeclareMathOperator{\Cnsplus}{C_{\text{ns}}^+}
\def\Magma{\textsf{MAGMA}}
\def\Sage{\textsf{Sage}}
\begin{document}

\title[Tetrahedral Elliptic Curves]{Tetrahedral Elliptic Curves and the local-global principle for Isogenies}

\author{Barinder Singh Banwait}
\address{Institut de Mathe\'ematiques de Bordeaux, Universit\'e de
  Bordeaux I, 33405 Talence, France}
\email{Barinder.Banwait@math.u-bordeaux1.fr}
\thanks{The first author was supported by an EPSRC Doctoral Training
  Award at the University of Warwick}

\author{John Cremona}
\address{Mathematics Institute, University of Warwick, Coventry CV4 7AL, UK}
\email{J.E.Cremona@warwick.ac.uk}
\urladdr{http://homepages.warwick.ac.uk/staff/J.E.Cremona/}

\begin{abstract}
We study the failure of a local-global principle for the existence of
$l$-isogenies for elliptic curves over number fields~$K$.  Sutherland
has shown that over $\Q$ there is just one failure, which occurs for
$l=7$ and a unique $j$-invariant, and has given a classification of
such failures when $K$ does not contain the quadratic subfield of the
$l$'th cyclotomic field.  In this paper we provide a classification of
failures for number fields which do contain this quadratic field, and
we find a new `exceptional' source of such failures arising from the
exceptional subgroups of $\PGL_2(\F_l)$. By constructing models of two
modular curves, $\Xsplit(5)$ and $X_{S_4}(13)$, we find two new
families of elliptic curves for which the principle fails, and we show
that, for quadratic fields, there can be no other exceptional
failures.
\end{abstract}

\maketitle

\section{Introduction}\label{sec:intro}
Let $E$ be an elliptic curve defined over a number field~$K$, and $l$
a prime. It is easy to show that, if $E$ possesses a $K$-rational
$l$-isogeny, then the reduction $\tilde{E}_\mp/\F_\mp$, for all
primes~$\mp$ of~$K$ of good reduction and not dividing $l$, likewise
possesses an $\F_\mp$-rational $l$-isogeny.

In \cite{Drew}, Andrew Sutherland asked a converse question: if
$\tilde{E}_\mp/\F_\mp$ admits an $\F_\mp$-rational $l$-isogeny for a
density one set of primes $\mp$, then does $E/K$ admit a $K$-rational
$l$-isogeny?  Sutherland showed that, while the answer to this
question is usually ``yes'', there nevertheless exist pairs $(E/K,l)$ for which the
answer is ``no''.

Whether an elliptic curve over a field possesses a rational
$l$-isogeny or not depends only on its $j$-invariant, provided that
the $j$-invariant is neither~$0$ nor~$1728$; thus, if the answer is
``no'' for one elliptic curve~$E/K$ for the prime $l$, it is also
``no'' for every elliptic curve over $K$ with the same $j$-invariant
$j(E)$ (with the same exceptions). Following Sutherland, we thus
define a pair $(l,j_0)$, consisting of a prime~$l$ and an
element~$j_0\not=0,1728$ of a number field $K$, to be
\textbf{exceptional for $K$} if there exists an elliptic curve $E$
over $K$, with $j(E) = j_0$, such that the answer to the above
question at $l$ is ``no''. We will refer to the prime in the
exceptional pair as an \textbf{exceptional prime for $K$}, and any
elliptic curve $E$ over $K$ with $j(E) = j_0$ as a \textbf{Hasse at $l$ curve over $K$}.

Sutherland gives a necessary condition for the existence of an
exceptional pair, under a certain assumption. To state Sutherland's
result, recall that the absolute Galois group $G_K := \Gal(\bar{K}/K)$
acts on the $l$-torsion subgroup $E(\bar{K})[l]$, yielding the
{mod-$l$ representation} \[ \bar{\rho}_{E,l} : G_K \to \GL_2(\F_l),\]
whose image $G_{E,l} := \textup{Im }\bar{\rho}_{E,l}$ is well-defined
up to conjugacy; we refer to $G_{E,l}$ as \emph{the mod-$l$ image of
  $E$}. We let $H_{E,l} := G_{E,l}$ modulo scalars, and observe that
$H_{E,l}$ depends only upon $j(E)$, provided that $j(E) \neq 0$ or 1728;
we refer to $H_{E,l}$ as \emph{the projective mod-$l$ image of $E$}.

It is easy to show that $l=2$ is not an exceptional prime for any number field, so henceforth we assume that $l$ is odd. We now define $l^\ast := \pm l$, where the plus sign is taken if $l \equiv 1\pmod 4$, and the minus sign otherwise. 

Sutherland's result may now be stated as follows; by $D_{2n}$ we mean the dihedral group of order $2n$.

\begin{proposition}[Sutherland]
\label{Suther}
Assume $\sqrt{l^\ast} \notin K$. If $(l,j_0)$ is exceptional for $K$,
then for all elliptic curves~$E/K$ with $j(E)=j_0$,
\begin{enumerate}
\item
The projective mod-$l$ image of $E$ is isomorphic to $D_{2n}$, where
$n > 1$ is an odd divisor of $\frac{l-1}{2}$;
\item
$l \equiv 3 \pmod 4$;
\item
The mod-$l$ image of $E$ is contained in the normaliser of a split Cartan subgroup of $\GL_2(\F_l)$;
\item
$E$ obtains a rational $l$-isogeny over $K(\sqrt{l^\ast})$.
\end{enumerate}
\end{proposition}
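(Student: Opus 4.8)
The plan is to translate exceptionality into a statement about the projective image $H_{E,l}$ via the Chebotarev density theorem, classify the possibilities for $H_{E,l}$ using Dickson's list of subgroups of $\PGL_2(\F_l)$, and then use the hypothesis $\sqrt{l^\ast}\notin K$ both to eliminate all but the dihedral case and to extract the arithmetic. For a prime $\mp$ of good reduction with $\mp\nmid l$, the reduction $\tilde{E}_\mp$ has an $\F_\mp$-rational $l$-isogeny exactly when $\bar\rho_{E,l}(\mathrm{Frob}_\mp)$ has an eigenvalue in $\F_l$, i.e.\ when its image in $\PGL_2(\F_l)$ fixes a point of $\P^1(\F_l)$; and the elements of $\PGL_2(\F_l)$ \emph{without} a fixed point on $\P^1(\F_l)$ are precisely the non-split semisimple ones (those of order dividing $l+1$ but not $l-1$). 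Since the classes $\bar\rho_{E,l}(\mathrm{Frob}_\mp)$ equidistribute in $G_{E,l}$, the local condition at a density-one set of primes is equivalent to: every element of $H_{E,l}$ fixes a point of $\P^1(\F_l)$; dually, $E/K$ admits no $K$-rational $l$-isogeny precisely when $H_{E,l}$ fixes no point, equivalently when $G_{E,l}$ lies in no Borel subgroup. Since $j_0\neq 0,1728$, all of this depends only on $j_0$, so it suffices to prove (1)--(4) for a single $E$ with $j(E)=j_0$, and we may pass freely between $G_{E,l}$, $H_{E,l}$ and their restrictions to extensions of $K$. Finally, the hypothesis $\sqrt{l^\ast}\notin K$ is equivalent to nontriviality of the quadratic character $\psi\colon G_K\xrightarrow{\bar\rho_{E,l}}\GL_2(\F_l)\xrightarrow{\det}\F_l^{\times}\to\F_l^{\times}/(\F_l^{\times})^2$, that is, to $\det G_{E,l}\not\subseteq(\F_l^{\times})^2$, equivalently $H_{E,l}\not\subseteq\PSL_2(\F_l)$.

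Now run through Dickson's classification of $H_{E,l}\le\PGL_2(\F_l)$: cyclic; dihedral; one of $A_4$, $S_4$, $A_5$; contained in a Borel subgroup; or $\PSL_2(\F_l)$ or $\PGL_2(\F_l)$ (there are no proper subfield subgroups, $l$ being prime). If $H_{E,l}$ fixes a point of $\P^1(\F_l)$ then $E/K$ has a rational $l$-isogeny, contradicting exceptionality; this rules out the Borel case and every cyclic group generated by a split semisimple or a unipotent element. A cyclic group generated by a non-split semisimple element, as well as each of $\PSL_2(\F_l)$, $\PGL_2(\F_l)$, and any dihedral subgroup of the normalizer of a non-split Cartan meeting that Cartan nontrivially, contains an element with no fixed point on $\P^1(\F_l)$ and so violates the local condition; the remaining dihedral subgroups of such a normalizer have order $\le 2$. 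Hence a surviving dihedral group $D_{2n}$ with $n>1$ lies in the normalizer of a \emph{split} Cartan. The groups $A_4$ and $A_5$ have no subgroup of index $2$, so would lie in $\PSL_2(\F_l)$, contradicting the hypothesis; for $S_4$, either $S_4\subseteq\PSL_2(\F_l)$ (the same contradiction), or $S_4\cap\PSL_2(\F_l)=A_4$, and then a short case-check on $l\bmod 4$ — using that an involution of $\PGL_2(\F_l)$ is split precisely when the square class of its determinant is that of $-1$, and that an element of order $4$ can be split only if $4\mid l-1$ — produces an element of $S_4$ with no fixed point on $\P^1(\F_l)$. Thus $H_{E,l}\cong D_{2n}$ for some $n>1$, lying in the normalizer of a split Cartan; since the preimage in $\GL_2(\F_l)$ of the normalizer of a split Cartan of $\PGL_2(\F_l)$ is the normalizer $N$ of a split Cartan of $\GL_2(\F_l)$, we get $G_{E,l}\subseteq N$, which is assertion (3).

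It remains to extract the arithmetic and prove (4). The character $\psi$ is trivial on scalars, hence factors through $H_{E,l}=D_{2n}$; as it is nontrivial and $D_{2n}$ is generated by its reflections, $\psi$ is nontrivial on some reflection. But every reflection, being a non-identity element of $H_{E,l}$ with a fixed point on $\P^1(\F_l)$, is a split involution, so its $\psi$-value is the Legendre symbol of $-1$, namely $(-1)^{(l-1)/2}$; hence $(-1)^{(l-1)/2}=-1$, i.e.\ $l\equiv 3\pmod 4$, which is assertion (2), and in particular $\tfrac{l-1}{2}$ is odd. Next, since $l\equiv 3\pmod 4$ the split involutions of $\PGL_2(\F_l)$ are exactly those of non-square determinant, so the requirement that every reflection of $D_{2n}$ be split forces the determinant of a generator $\bar r$ of the rotation subgroup $H_{E,l}\cap(\text{split Cartan})$ to be a square; as $\bar r$ has order $n$, this means $n\mid\tfrac{l-1}{2}$, and in particular $n$ is odd, which together with $n>1$ is assertion (1). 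Finally, put $L=K(\sqrt{l^\ast})$, so $G_L=\ker\psi$; then $\bar\rho_{E,l}(G_L)$ is the subgroup of $G_{E,l}$ on which the induced character $\psi$ is trivial, and since $\psi$ is trivial on $D_{2n}$ precisely on the rotation subgroup, the image of $\bar\rho_{E,l}(G_L)$ in $\PGL_2(\F_l)$ is exactly $\langle\bar r\rangle$, which lies in the split Cartan. Taking preimages once more, $\bar\rho_{E,l}(G_L)$ lies in the split Cartan of $\GL_2(\F_l)$, so $E(\bar K)[l]$ is diagonalizable as a $G_L$-module and $E/L$ acquires a rational $l$-isogeny, which is assertion (4). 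The main obstacle is the classification step of the second paragraph — making Dickson's case analysis watertight, and in particular excluding $A_4$, $S_4$, $A_5$ — precisely because, as the rest of the paper shows, these exceptional subgroups do yield exceptional pairs once the assumption $\sqrt{l^\ast}\notin K$ is relaxed.
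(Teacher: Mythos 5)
Your proof is correct, and it supplies an argument that the paper itself does not give: Proposition~\ref{Suther} is quoted from \cite{Drew} without proof, the paper's nearest analogue being the proof of Proposition~\ref{prop:ell-congruence} in Section~\ref{sec:pf-gp} for the complementary case $\sqrt{l^\ast}\in K$. Your framework is the same as that section's: reduce to the ``Hasse subgroup'' condition on $H_{E,l}$ (you re-derive Sutherland's Proposition~\ref{Hasse} via Chebotarev rather than citing it), run through Dickson's classification, and decide which elements of $\PGL_2(\F_l)$ fix points of $\P^1(\F_l)$. The main difference is in how the congruence and divisibility conditions are extracted: the paper's Lemma~\ref{divides} uses the orbit-counting lemma together with $\mathrm{sign}(h)=\det h$ as a permutation of $\P^1(\F_l)$, whereas you use the equivalent criterion that an involution of $\PGL_2(\F_l)$ is split exactly when its determinant lies in the square class of $-1$, plus the fact that the split Cartan modulo scalars is cyclic of order $l-1$; your route is arguably more transparent for deducing (1), (2) and (4) simultaneously from the nontriviality of the character $\psi$ on $D_{2n}$. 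Your treatment of the $S_4$ case (splitting on $l\bmod 4$ and finding a fixed-point-free transposition or $4$-cycle) is exactly the point where the hypothesis $\sqrt{l^\ast}\notin K$ is essential, as you note. One minor slip: your parenthetical characterisation of the fixed-point-free elements as ``those of order dividing $l+1$ but not $l-1$'' is wrong for involutions (whose order divides both), but you never use it --- the eigenvalue/determinant criteria you actually invoke are correct.
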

(In fact, the converse is also true, as may be shown by applying the proof of the converse part of Proposition~\ref{P3} below; see Section~\ref{sec:pf-gp}.)

Sutherland used this result for $K=\Q$ to determine the exceptional
pairs for $\Q$ (where the assumption~$\sqrt{l^\ast} \notin\Q$ is
trivially satisfied for all $l$). If $(l,j(E))$ is exceptional for
$\Q$, then (3) above says that $E$ corresponds to a $\Q$-point on the
modular curve $\Xsplit(l)$. By the recent work of Bilu, Parent and
Rebolledo \cite{BPR}, it follows that $l$ must be 2,3,5,7 or 13. Of
these, only 3 and $7$ are $3\pmod{4}$, and $3$ can easily be ruled out
as a possible exceptional prime (for all number fields). Thus, $7$ is
the only possible exceptional prime for $\Q$, and (1) above tells us
that the projective mod-$7$ image of a Hasse at $7$ curve over $\Q$
must be isomorphic to $D_6$, the dihedral group of order~$6$. The
modular curve parametrising elliptic curves with this specific level-7
structure turns out to be the rational elliptic curve with label 49a3
in \cite{Cre}, which has precisely two non-cuspidal rational
points. Evaluating $j$ at these points yields the same value, and
hence gives Sutherland's second result.

\begin{theorem}[Sutherland, Theorem 2 in \cite{Drew}]
\label{Suth2}
$(7,\frac{2268945}{128})$ is the only exceptional pair for $\Q$. 
\end{theorem}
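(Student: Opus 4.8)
The plan is to combine Proposition~\ref{Suther}, applied with $K=\Q$ (where the hypothesis $\sqrt{l^\ast}\notin\Q$ holds for every $l$), with the known determination of the rational points on the curves $\Xsplit(l)$. Suppose $(l,j_0)$ is exceptional for $\Q$ and choose an elliptic curve $E/\Q$ with $j(E)=j_0$; by definition $j_0\neq 0,1728$. Part~(3) of Proposition~\ref{Suther} says that $E$ gives rise to a non-cuspidal rational point on $\Xsplit(l)$. By the theorem of Bilu--Parent--Rebolledo~\cite{BPR}, when $l\notin\{2,3,5,7,13\}$ the only rational points of $\Xsplit(l)$ are cusps and CM points; and a short case analysis (according to whether $l$ ramifies in, splits in, or is inert in the relevant CM field) shows that a CM point with $j\neq 0,1728$ corresponds either to a curve with a rational $l$-isogeny or to a curve that is not Hasse at $l$, hence in neither case to an exceptional pair. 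Therefore $l\in\{2,3,5,7,13\}$. Part~(2) of Proposition~\ref{Suther} forces $l\equiv 3\pmod 4$, which eliminates $l=2,5,13$, and $l=3$ is impossible by part~(1), since $\tfrac{3-1}{2}=1$ has no odd divisor $n>1$. Hence $l=7$.

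For $l=7$, part~(1) of Proposition~\ref{Suther} identifies the projective mod-$7$ image of $E$ as $D_6\cong S_3$, the only $D_{2n}$ with $n>1$ an odd divisor of $\tfrac{7-1}{2}=3$. Consequently $j_0$ is the $j$-invariant of a non-cuspidal rational point on the modular curve $X$ parametrising elliptic curves whose projective mod-$7$ image lies in a fixed conjugate of $D_6$ in $\PGL_2(\F_7)$; this is compatible with part~(3), since $D_6$ sits inside the normaliser $\Csplus$ of a split Cartan, whose image in $\PGL_2(\F_7)$ is dihedral of order $12$. The heart of the proof is to get explicit control of $X$: I would show first that $X$ has genus $1$, then produce an explicit equation for it --- by analysing the level-$7$ structure cut out by $D_6$, or by realising $X$ as a double cover of a model of $\Xsplit(7)$ --- and, after exhibiting a rational point, identify $X$ with the rank-zero elliptic curve $49a3$ of \cite{Cre}. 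This construction and identification is where essentially all of the difficulty lies; the remainder is a finite computation.

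Finally, since $49a3$ has rank~$0$, its rational points are finite in number and can be listed, and one finds that exactly two of them are non-cuspidal and that the $j$-map sends both to the single value $\tfrac{2268945}{128}$, which is neither $0$ nor $1728$. It remains to check that $(7,\tfrac{2268945}{128})$ is genuinely exceptional. For this I would verify that an elliptic curve $E/\Q$ with $j(E)=\tfrac{2268945}{128}$ admits no rational $7$-isogeny --- equivalently, that $\tfrac{2268945}{128}$ is not in the image of the $j$-map on $X_0(7)(\Q)$ --- which, because every proper subgroup of $D_6$ is contained in a Borel, forces the projective mod-$7$ image of $E$ to be exactly $D_6$; then the converse direction of Proposition~\ref{Suther}, obtained by running the proof of the converse part of Proposition~\ref{P3}, furnishes a Hasse at $7$ curve over $\Q$ with this $j$-invariant. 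Putting the three steps together, $7$ is the unique exceptional prime for $\Q$ and it contributes exactly the pair $(7,\tfrac{2268945}{128})$.
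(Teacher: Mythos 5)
Your proposal is correct and follows essentially the same route as the paper (which itself only sketches Sutherland's argument): Proposition~\ref{Suther} combined with Bilu--Parent--Rebolledo and the congruence conditions to force $l=7$ with projective image $D_6$, then identification of the corresponding modular curve with the rank-zero curve 49a3 and evaluation of the $j$-map at its two non-cuspidal rational points. The extra details you supply (disposing of CM points and verifying the converse via the argument of Proposition~\ref{P3}) are exactly the steps the paper leaves implicit.
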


\noindent In this paper we would like to investigate what happens in the case where $\sqrt{l^\ast} \in K$. In Section~\ref{sec:pf-gp} we will prove the following using Sutherland's methods.

\begin{proposition}
\label{P3}
Assume $\sqrt{l^\ast} \in K$. Then $(l,j_0)$ is exceptional for $K$ if
and only if one of the following holds for elliptic curves~$E/K$ with
$j(E)=j_0$:
\begin{itemize}
\item
$H_{E,l} \cong A_4$ and $l \equiv 1\pmod{12}$;
\item
$H_{E,l} \cong S_4$ and $l \equiv 1\pmod{24}$;
\item
$H_{E,l} \cong A_5$ and $l \equiv 1\pmod{60}$;
\item
$H_{E,l} \cong D_{2n}$ and $l \equiv 1\pmod{4}$, where $n > 1$ is a divisor of $\frac{l-1}{2}$, and $G_{E,l}$ lies in the normaliser of a split Cartan subgroup. 
\end{itemize}
\end{proposition}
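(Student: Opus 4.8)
The plan is to translate the local--global condition into a statement about the single group $H := H_{E,l} \le \PGL_2(\F_l)$, and then to run through Dickson's classification of its subgroups. First I would recall the mechanism behind Proposition~\ref{Suther}: for a prime $\mp$ of good reduction with $\mp\nmid l$, the reduction $\tilde E_\mp$ admits an $\F_\mp$-rational $l$-isogeny if and only if the Frobenius class $\mathrm{Frob}_\mp\subseteq G_{E,l}$ consists of elements stabilising a line in $E[l]$; by Chebotarev this holds for a density-one set of $\mp$ precisely when \emph{every} element of $G_{E,l}$ stabilises a line, and since line-stabilisation is preserved under scaling this is equivalent to the condition that every element of $H$ fix a point of $\P^1(\F_l)$. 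On the other hand $E$ has a $K$-rational $l$-isogeny if and only if $H$ itself fixes a point of $\P^1(\F_l)$. As $H$ depends only on $j_0$, we get: $(l,j_0)$ is exceptional for $K$ if and only if \textbf{(a)} every element of $H$ fixes a point of $\P^1(\F_l)$, but \textbf{(b)} $H$ fixes no point of $\P^1(\F_l)$. (In the dihedral case, whether $G_{E,l}$ lies in the normaliser of a split or a non-split Cartan is likewise a function of $j_0$, read off from the torus supporting the preimage of the index-$2$ cyclic part of $H$.) The hypothesis $\sqrt{l^\ast}\in K$ enters through the determinant: since $\det\circ\bar\rho_{E,l}$ is the mod-$l$ cyclotomic character, $\det(G_{E,l}) = \Gal(K(\zeta_l)/K)$, which lies in $(\F_l^\times)^2$ exactly when $K$ contains the quadratic subfield $\Q(\sqrt{l^\ast})$ of $\Q(\zeta_l)$; equivalently, $\sqrt{l^\ast}\in K$ if and only if $H\subseteq\PSL_2(\F_l)$.

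So $H$ is a subgroup of $\PSL_2(\F_l)$; by Dickson it is cyclic, dihedral, isomorphic to $A_4$, to $S_4$ (which occurs only if $l\equiv\pm1\pmod8$), to $A_5$ (only if $l\equiv\pm1\pmod5$ or $l=5$), of Borel type (inside $\Z/l\rtimes\Z/\tfrac{l-1}{2}$), or equal to $\PSL_2(\F_l)$. Condition (b) discards the Borel-type subgroups and the cyclic groups with split or unipotent generator. For condition (a) the key facts are: a nontrivial element of $\PGL_2(\F_l)$ fixes a point of $\P^1(\F_l)$ iff it is unipotent or split semisimple; an element of order $m>2$ is split semisimple iff $m\mid l-1$ (otherwise $m\mid l+1$ and it fixes no $\F_l$-point); and the involutions of $\PSL_2(\F_l)$ — one conjugacy class — lift to order-$4$ elements of $\SL_2(\F_l)$ with eigenvalues $\pm\sqrt{-1}$, hence are split iff $l\equiv1\pmod4$. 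Thus $\PSL_2(\F_l)$, which contains a non-split element of order $\tfrac{l+1}{2}>2$ when $l\ge5$, fails (a); a cyclic group with non-split generator, or a dihedral group whose index-$2$ cyclic part sits in a non-split torus, fails (a). So the surviving possibilities are precisely the four of the Proposition, with $G_{E,l}$ forced into the normaliser of a split Cartan in the dihedral case.

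Extracting the congruences is the heart of the argument and proceeds case by case on the element orders. For $H\cong A_4$ the order-$3$ elements must be split ($3\mid l-1$) and the involutions must be split ($l\equiv1\pmod4$), giving $l\equiv1\pmod{12}$. For $H\cong A_5$ the additional order-$5$ elements require $5\mid l-1$, giving $l\equiv1\pmod{60}$. For $H\cong S_4$ the existence of $S_4$ in $\PSL_2(\F_l)$ gives $l\equiv\pm1\pmod8$, the order-$4$ elements being split forces $4\mid l-1$ and hence $l\equiv1\pmod8$, the order-$3$ elements force $3\mid l-1$, and the involutions are then automatically split; together $l\equiv1\pmod{24}$. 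For $H\cong D_{2n}$ with $G_{E,l}$ in the normaliser of a split Cartan the index-$2$ cyclic part lies in the split torus of $\PSL_2(\F_l)$, of order $\tfrac{l-1}{2}$, so $n\mid\tfrac{l-1}{2}$; the reflections are involutions of $\PSL_2(\F_l)$, so must be split, whence $l\equiv1\pmod4$; and $n>1$ because $H$ fixes no point. Conversely, in each case the stated congruence together with $\sqrt{l^\ast}\in K$ (and, dihedrally, the split-Cartan hypothesis) makes every element of $H$ fix a point while $H$ fixes none, so $(l,j_0)$ is exceptional; this is the converse argument alluded to after Proposition~\ref{Suther}.

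The main obstacle I anticipate is the split-versus-non-split bookkeeping for elements of prescribed order inside each group, in particular identifying exactly which congruence makes the involutions split and, for $S_4$, reconciling that with the order-$4$ condition. The reformulation $\sqrt{l^\ast}\in K \Leftrightarrow H\subseteq\PSL_2(\F_l)$ is what keeps this manageable, and it is also what sharpens $n\mid l-1$ to $n\mid\tfrac{l-1}{2}$ in the dihedral case.
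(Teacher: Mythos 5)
Your reduction to the purely group-theoretic statement (every element of $H:=H_{E,l}$ fixes a point of $\P^1(\F_l)$ but $H$ itself does not, with $\sqrt{l^\ast}\in K$ forcing $H\subseteq\PSL_2(\F_l)$) and the subsequent appeal to Dickson's classification match the paper exactly; the paper simply quotes its Proposition~\ref{Hasse} and Lemma~\ref{lemma} for the reduction rather than re-deriving it via Chebotarev. Where you genuinely diverge is in how the congruences are extracted. The paper proves one uniform lemma: if $H$ is Hasse then the order of every $h\in H$ divides $\frac{l-1}{2}$, established by orbit-counting on $\P^1(\F_l)$ together with Sutherland's observation that the sign of $h$ as a permutation of $\P^1(\F_l)$ coincides with $\det h$ modulo squares; all four congruences then drop out at once from the list of element orders, and the same sign argument applied to the orbit count $(l+1)/r$ for a fixed-point-free element yields the first half of the converse. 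You instead analyse split versus non-split semisimple elements directly, treating involutions separately via their lifts to order-$4$ elements of $\SL_2(\F_l)$ with eigenvalues $\pm\sqrt{-1}$. This is correct, but it costs you extra bookkeeping in the $S_4$ case: ``order $4$ and split'' only gives $4\mid l-1$ a priori, and you must import the classical criterion $S_4\le\PSL_2(\F_l)$ iff $l\equiv\pm1\pmod 8$ to recover $8\mid l-1$, whereas the paper's lemma gives $4\mid\frac{l-1}{2}$ in one stroke. Your route is more transparent about which torus each element lives in (and is what sharpens $n\mid l-1$ to $n\mid\frac{l-1}{2}$, as you note); the paper's is more economical.

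One step you assert without argument: in the converse, that none of $A_4$, $S_4$, $A_5$, $D_{2n}$ ($n>1$) fixes a point of $\P^1(\F_l)$. The paper handles this via a Mackey double-coset decomposition of $\P^1(\F_l)$ as an $H$-set, reducing it to the fact that $H$ is not contained in any conjugate of the Borel subgroup. You should supply at least the one-line version: the stabiliser of a point of $\P^1(\F_l)$ is a Borel subgroup, whose subgroups of order prime to $l$ are cyclic, and none of your four groups is cyclic. This is a small, easily filled gap and does not affect the soundness of the overall argument.
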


Thus, in the case $\sqrt{l^\ast} \in K$, there are two sorts of exceptional pairs; the \textbf{dihedral ones}, and the \textbf{non-dihedral ones}. 

Let us now consider each of these two cases over $K = \Q(\sqrt{l^\ast})$, the smallest field containing $\sqrt{l^\ast}$. Regarding the dihedral pairs, we may ask the following question.

\begin{question}
\label{Q3}
For which $l \equiv 1\pmod 4$ is there an elliptic curve $E$ over
$\Q(\sqrt{l})$ such that $H_{E,l} \cong D_{2n}$, for $n > 1$ a divisor
of $\frac{l-1}{2}$?
\end{question}

A positive answer to the Serre Uniformity Problem for number fields
would imply that there should be only finitely many such $l$, but we
are unable to prove this. Instead, we show that the set of $l$ asked
for by the above question is not empty; $l=5$ gives a positive
answer.

\begin{theorem}
\label{Thm1}
An elliptic curve $E$ over $\Q(\sqrt{5})$ has $H_{E,5} \cong D_4$ if
and only if its $j$-invariant is given by the formula
\begin{equation}\label{eqn:jmapV4}
  j(E) = \frac{((s+5)(s^2 - 5)(s^2 + 5s+10))^3}{(s^2 +  5s + 5)^5}
\end{equation}
for some $s \in \Q(\sqrt{5})$, together with the condition that $s^2 -
20$ is not a square in $\Q(\sqrt{5})$ for all $s \in \Q(\sqrt{5})$
satisfying \eqref{eqn:jmapV4}.

Thus, the exceptional pairs at 5 over $\Q(\sqrt{5})$ are given by $(5,j(E))$ for $j(E)$ as above, and in particular, there are infinitely many exceptional pairs at 5 over $\Q(\sqrt{5})$.
\end{theorem}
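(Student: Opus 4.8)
The plan is to realise the $j$-invariants described in the theorem as the values of the forgetful $j$-map on a genus-zero modular curve, to compute that map explicitly (obtaining~\eqref{eqn:jmapV4}), and then to cut out the sub-locus on which the mod-$5$ image drops below $D_4$.

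First I would record the group theory over $K=\Q(\sqrt5)$. Since $K\subset\Q(\zeta_5)$ with $[\Q(\zeta_5):K]=2$, the mod-$5$ cyclotomic character on $G_K$ has image $(\F_5^\times)^2$, so $\det G_{E,5}\subseteq(\F_5^\times)^2$ and hence $H_{E,5}\subseteq\PSL_2(\F_5)\cong A_5$. Every subgroup of $A_5$ isomorphic to $D_4$ (the Klein four group) is a Sylow $2$-subgroup, so all such are conjugate, and one of them is the reduction modulo scalars of $\Csplus\cap(\SL_2(\F_5)\cdot Z)$, where $Z$ denotes the scalars. Hence, for $E/K$, the dihedral clause ``$H_{E,5}\cong D_4$ and $G_{E,5}$ lies in a split Cartan normaliser'' of Proposition~\ref{P3} is \emph{equivalent} to simply ``$H_{E,5}\cong D_4$'', and the curves we seek are those corresponding to (non-cuspidal) $K$-points of the modular curve $\Xsplit(5)$. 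This curve has genus $0$ with a $\Q$-rational point, so $\Xsplit(5)\cong\P^1_\Q$; fix a coordinate $s$.

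Next I would compute the map $j\colon\Xsplit(5)\to X(1)$ in this coordinate: either from the classical genus-zero model of $X(5)$ and the $\GL_2(\F_5)/Z$-action on its function field, or by working with the split Cartan curve (without the normaliser), which maps to $X_0(5)$ by forgetting either of its two cyclic $5$-subgroups, and pushing down to $\Xsplit(5)$. The answer is then confirmed by matching ramification over $j=0,1728,\infty$ and testing a few distinguished points, giving~\eqref{eqn:jmapV4}. \textbf{I expect this to be the main obstacle:} it is a concrete but delicate computation inside the function field of $X(5)$, and the coordinate $s$ must be chosen carefully so that the degeneracy locus below takes its clean form.

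Finally I would analyse degeneracy. If $j(E)$ has the form~\eqref{eqn:jmapV4} then $H_{E,5}$ lies in a conjugate of $D_4$, so it is trivial, cyclic of order $2$, or all of $D_4$; triviality is impossible since $\zeta_5\notin K$. A cyclic order-$2$ image is conjugate into a split Cartan, hence yields a $K$-rational $5$-isogeny, while $D_4$ is conjugate into no Borel; thus $H_{E,5}\cong D_4$ exactly when $E$ has no $K$-rational $5$-isogeny. I would then show that the double cover of $\Xsplit(5)$ that also records one of the two conjugate cyclic $5$-subgroups has, in the chosen coordinate, the affine model $y^2=s^2-20$, and that $E_s$ acquires a rational $5$-isogeny over $K$ precisely when $s$ lifts to a $K$-point of this cover, i.e.\ when $s^2-20\in(K^\times)^2$, with a finite check disposing of the fibres over $j=0,1728,\infty$ and of points with extra automorphisms. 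Because $H_{E,5}$ depends only on $j(E)$ while~\eqref{eqn:jmapV4} need not be injective, ``$H_{E,5}\cong D_4$'' forces $s^2-20\notin(K^\times)^2$ for \emph{every} preimage $s$ of $j(E)$ — the universally quantified clause of the statement. For infinitude: \eqref{eqn:jmapV4} is non-constant, and the excluded set $\{s:s^2-20\in(K^\times)^2\}$ is the image of the $\Q(\sqrt5)$-points of the conic $y^2=s^2-20$, hence thin, so its complement is a non-thin (in particular infinite) set of $s$ that maps, with finite fibres, to infinitely many values of $j$; by Proposition~\ref{P3} these give exactly the exceptional pairs at $5$ over $\Q(\sqrt5)$, and there are infinitely many.
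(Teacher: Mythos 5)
Your overall strategy matches the paper's: identify the locus with $\Xsplit(5)\cong X_0^+(25)$, express the $j$-map in a Hauptmodul $s$, and detect degeneration of the mod-$5$ image via the double cover $X_0(25)\to X_0^+(25)$, whose fibre over $s$ is cut out by $x^2-sx+5$ of discriminant $s^2-20$. Note that the computation you flag as the main obstacle and leave undone is done in the paper by assembling known formulae: Klein's expression for $j(5\tau)$ in the Hauptmodul $t_5$ of $X_0(5)$, Maier's expression of $t_5$ in $t_{25}$, and the Fricke involution $t_{25}\mapsto 5/t_{25}$, with $s=t_{25}+5/t_{25}$.

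The genuine gap is in your degeneracy step. The claim that ``$E_s$ acquires a rational $5$-isogeny over $K$ precisely when $s^2-20\in(K^\times)^2$'' cannot be correct as stated for a fixed $s$: possession of a $K$-rational $5$-isogeny depends only on $j(E)$, whereas squareness of $s^2-20$ does not (the paper's example: $s=(3\sqrt5-80)/41$ and $s=3\sqrt5+2$ give the same $j$, with $s^2-20$ a nonsquare for the first and a square for the second). What $s^2-20\in(K^\times)^2$ actually detects is whether the two isogenies in the particular unordered pair recorded by the point $s$ are individually $K$-rational, i.e.\ whether $G_{E,5}$ lies in the split Cartan attached to $s$. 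But if $H_{E,5}$ is one of the other two order-$2$ subgroups of $D_4$, then $E$ has $K$-rational $5$-isogenies belonging to a \emph{different} Cartan, visible only at one of the other two $K$-rational preimages of $j(E)$ under \eqref{eqn:jmapV4}; these preimages are permuted by the order-$3$ automorphism group of $K(s)/K(j)$, generated by $s\mapsto((\sqrt{5}-5)s-20)/(2s+5+\sqrt{5})$. The correct equivalence is therefore: $H_{E,5}\subsetneq D_4$ if and only if $s'^2-20$ is a square for \emph{some} $K$-rational preimage $s'$ of $j(E)$ --- which is exactly why the theorem quantifies over all such $s$. Your closing sentence asserts this quantified condition, but the justification offered (``because $H_{E,5}$ depends only on $j(E)$'') does not derive it from your pointwise claim; indeed the two are mutually inconsistent. (A minor further slip: triviality of $H_{E,5}$ is not excluded by $\zeta_5\notin K$, since a scalar image can still have determinant image $(\F_5^\times)^2$; but this is harmless, as the trivial group is cyclic and is excluded by the same condition.)
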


The proof of this theorem considers the modular curve $\Xsplit(5)$ corresponding to the normaliser of a split Cartan subgroup, whose
$\Q(\sqrt{5})$ points (as we will see) correspond to elliptic curves
$E$ over $\Q(\sqrt{5})$ with $H_{E,5} \subseteq D_4$. This curve is
defined over $\Q$ and has genus~$0$; writing the $j$-map \[\Xsplit(5)
\overset{j}\longrightarrow X(1)\] as a rational function yields the
parametrisation \eqref{eqn:jmapV4}; the further condition stated in the theorem is needed to force the
corresponding elliptic curve to have $H_{E,5} \cong D_4$ (and not merely
a subgroup of~$D_4$); see Section~\ref{sec:pf-thm} for the full proof.

Regarding the non-dihedral pairs, we prove the following in
Section~\ref{sec:pf-prop}. 
\begin{proposition}
\label{P4}
The only non-dihedral exceptional prime $l$ over any quadratic field
is~$13$ over~$\Q(\sqrt{13})$, where the projective mod-$13$ image is
isomorphic to $A_4$.
\end{proposition}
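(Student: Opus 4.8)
The plan is to reduce the claim, via Proposition~\ref{P3}, to a question about rational points on three explicit families of modular curves, to bound the prime~$l$ in each family, and then to settle the finitely many remaining $l$ by explicit computation -- the case $l=13$ being the one for which a model of $X_{S_4}(13)$ must be constructed and analysed in detail.

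\emph{Reduction.} Suppose $(l,j_0)$ is a non-dihedral exceptional pair over a quadratic field~$K$. By Proposition~\ref{P3} we have $\sqrt{l^\ast}\in K$; as $K$ is quadratic and $l^\ast$ is not a square, $K=\Q(\sqrt{l^\ast})$, and since each of $l\equiv 1\pmod{12}$, $l\equiv 1\pmod{24}$, $l\equiv 1\pmod{60}$ forces $l\equiv 1\pmod4$, we get $l^\ast=l$ and $K=\Q(\sqrt l)$. So it suffices to prove: there is no $E/\Q(\sqrt l)$ with $H_{E,l}\cong S_4$ (for $l\equiv 1\pmod{24}$), none with $H_{E,l}\cong A_5$ (for $l\equiv 1\pmod{60}$), and the only $l\equiv 1\pmod{12}$ admitting an $E/\Q(\sqrt l)$ with $H_{E,l}\cong A_4$ is $l=13$. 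Since the projective mod-$l$ image depends only on $j(E)$ when $j(E)\neq 0,1728$, each assertion concerns the non-cuspidal points with $j\neq 0,1728$ on the modular curve $X_G(l)$, $G\in\{A_4,S_4,A_5\}$: the quotient of $X(l)$ by the preimage $\widetilde G$ in $\GL_2(\F_l)$ of $G\subset\PGL_2(\F_l)$. This is a curve over~$\Q$ which -- since under the stated congruences $G\subset\PSL_2(\F_l)$, so that $\det\widetilde G=(\F_l^\times)^2$ -- becomes a disjoint union of two $\Q$-conjugate curves over $\Q(\sqrt l)$; this is exactly why such points appear over $\Q(\sqrt l)\subset\Q(\mu_l)$ and not over~$\Q$, in keeping with Proposition~\ref{Suther}. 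For non-existence it is enough to show $X_G(l)$ has no non-cuspidal, non-CM $\Q(\sqrt l)$-point, CM $j$-invariants being irrelevant since a CM curve has mod-$l$ image inside a Cartan normaliser, hence $H_{E,l}$ contained in a dihedral group rather than in $A_4$, $S_4$ or $A_5$.

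\emph{Genus and a bound on $l$.} I would next compute $g(X_G(l))$ by Riemann--Hurwitz for $X_G(l)\to X(1)$: the degree is $[\GL_2(\F_l):\widetilde G]$ and the ramification over $j=\infty,0,1728$ is governed by the orbits of~$G$ on $\P^1(\F_l)$ and by the cycle types of an order-$3$ and an order-$2$ element of~$G$. This shows $g(X_G(l))\to\infty$, so $g(X_G(l))\ge 2$ outside a small explicit set of~$l$ which one lists. But this alone yields only Faltings finiteness of the $\Q(\sqrt l)$-points, not their absence, and a further input is needed to cut down to finitely many~$l$; \emph{this is the main obstacle}. I would obtain the bound either from an effective uniformity statement -- that a non-CM elliptic curve over a quadratic field has exceptional projective mod-$l$ image only for $l$ below an explicit constant -- or from a Mazur--Merel-type winding-quotient argument showing that a suitable quotient of $\Jac(X_G(l))$ has Mordell--Weil rank~$0$ over $\Q(\sqrt l)$ uniformly in~$l$, so that only cusps and CM points can occur. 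Either way one is left, in each of the three cases, with a short explicit list of primes.

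\emph{The finite list.} For each remaining $l$ I would write down a model of $X_G(l)$ over~$\Q$ (from a model of $X(l)$ and the action of $\widetilde G$) together with its $j$-map, and find its $\Q(\sqrt l)$-points. The only positive case is $l=13$, $G=A_4$: here one constructs $X_{S_4}(13)$, of small genus, together with its double cover $X_{A_4}(13)\to X_{S_4}(13)$, determines the $\Q(\sqrt{13})$-points, and checks that the resulting $j$-invariants give elliptic curves with projective mod-$13$ image exactly~$A_4$ -- this is the affirmative case $l=13$ of the proposition. For every other $l$ on the list ($l=37,61,73,\dots$ for $A_4$; $l=73,97,\dots$ for $S_4$; $l=61,181,\dots$ for $A_5$) one must show $X_G(l)$ has no non-cuspidal $\Q(\sqrt l)$-point, by whatever is cheapest: a local obstruction $X_G(l)(\Q_p)=\emptyset$ when one exists, and otherwise a Mordell--Weil computation over $\Q(\sqrt l)$ showing rank~$0$ with all torsion cuspidal or CM, or an explicit Chabauty-type argument. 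Assembling these leaves $l=13$ with image $A_4$ as the only possibility. Besides the uniform bound on~$l$, the most delicate points are the Mordell--Weil and Chabauty computations for the remaining $l$ of larger genus.
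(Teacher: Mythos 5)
Your reduction via Proposition~\ref{P3} to the three cases $H_{E,l}\cong A_4, S_4, A_5$ over $K=\Q(\sqrt l)$ with the congruences $l\equiv1\pmod{12},\pmod{24},\pmod{60}$ is correct and matches the paper. But the heart of the proof --- the step you yourself flag as ``the main obstacle'', namely cutting the infinitely many candidate primes down to a finite list --- is left unproven. You offer two possible sources for the bound: an effective uniformity statement for projective mod-$l$ images over quadratic fields (this is essentially the Serre Uniformity Problem, which the paper explicitly treats as open even in the dihedral setting of Question~\ref{Q3}), or a Mazur--Merel-type winding-quotient argument giving rank~$0$ over $\Q(\sqrt l)$ uniformly in $l$ (not available; note that the base field varies with $l$, which already puts this outside the known Merel-type results). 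Neither is established, so the proposal does not close. Worse, the fallback program you describe --- determining $\Q(\sqrt l)$-points on $X_G(l)$ for $l=37,61,73,\dots$ by Mordell--Weil and Chabauty computations --- is exactly the kind of problem the paper states is out of reach even for the single genus-$3$ curve $X_{S_4}(13)$, where Chabauty fails because the expected Mordell--Weil rank equals the genus and the $\Q(\sqrt{13})$-points have \emph{not} been provably determined. (For the existence half at $l=13$ you only need to exhibit one non-cuspidal, non-CM point with projective image exactly $A_4$, which the paper does in Corollary~\ref{C2}; full determination of the points is neither needed nor currently possible.)

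The missing ingredient is a purely local bound, not a global Diophantine one: David's result (\cite[Lemme 2.4]{David}, quoted in Section~\ref{sec:pf-prop}) that for $E$ over a number field $F$ the projective mod-$p$ image contains an element of order at least $\frac{p-1}{4e_p}$, where $e_p$ is the maximal ramification index of a prime of $F$ above $p$. Since $l$ ramifies in $\Q(\sqrt l)$ we have $e_l=2$, so the projective mod-$l$ image contains an element of order at least $\frac{l-1}{8}$; as the maximal element orders in $A_4$, $S_4$, $A_5$ are $3$, $4$, $5$, this forces $l\le25$, $l\le33$, $l\le41$ respectively. Intersecting with your congruences eliminates $S_4$ and $A_5$ entirely and leaves only $l=13$ for $A_4$. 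No modular curve beyond $X_{S_4}(13)$ (needed only for the existence statement) enters the argument. With David's lemma inserted in place of your speculative bounding step, your outline collapses to the paper's proof; without it, the proposal has a genuine gap.
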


This leads to the following question. 

\begin{question}
\label{Q2}
Find all elliptic curves $E$ over $\Q(\sqrt{13})$ such that $H_{E,13}
\cong A_4$.
\end{question}

By Proposition~\ref{P4}, such elliptic curves are the only
non-dihedral Hasse curves over quadratic fields.

We take a similar approach to this question as we did for Theorem~\ref{Thm1}, by studying the relevant modular curve $X_{S_4}(13)$; this is the modular curve over $\Q$ corresponding to the pullback to $\GL_2(\F_{13})$ of $S_4 \subset \PGL_2(\F_{13})$; the earliest reference to this curve we are aware of is in Mazur's article \cite{MazurRat}. This modular curve is geometrically connected, and over the complex numbers has the description $\Gamma_{A_4}(13) \backslash \mathcal{H}^\ast$, where $\Gamma_{A_4}(13)$ is the pullback to $\PSL_2(\Z)$ of $A_4 \subset \PSL_2(\F_{13})$. A $\Q$-point on $X_{S_4}(13)$ corresponds to an elliptic curve $E/\Q$ such that $H_{E,13} \subseteq S_4$. A $\Q(\sqrt{13})$-point corresponds to an elliptic curve $E/\Q(\sqrt{13})$ such that $H_{E,13} \subseteq A_4$. Thus, the elliptic curves we seek in Question~\ref{Q2} correspond to certain $\Q(\sqrt{13})$-points on the modular curve $X_{S_4}(13)$. 

\begin{theorem}
\label{Thm2}
The modular curve $X_{S_4}(13)$ is a genus~$3$ curve, whose canonical embedding in $\P^2_\Q$ has the following model:
\begin{align*}
\CC: 4X^3Y - 3X^2Y^2 + 3XY^3 - X^3Z + 16X^2YZ - 11XY^2Z + \\ 5Y^3Z + 3X^2Z^2 + 9XYZ^2 + Y^2Z^2 + XZ^3 + 2YZ^3 = 0.
\end{align*}
On this model, the $j$-map $X_{S_4}(13) \overset{j}\longrightarrow X(1)$ is given by \[j(X,Y,Z) = \frac{n(X,Y,Z)}{d(X,Y,Z)^{13}},\] where 
\begin{align*}
d(X,Y,Z) &= 5 X^{3} - 19 X^{2} Y - 6 X Y^{2} + 9 Y^{3} + X^{2} Z \\ 
& - 23 X Y Z - 16 Y^{2} Z + 8 X Z^{2} - 22 Y Z^{2} + 3 Z^{3}
\end{align*}
and $n(X,Y,Z)$ is an explicit degree $39$ polynomial.
\end{theorem}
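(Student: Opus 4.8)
The plan is to compute the modular curve $X_{S_4}(13)$ explicitly from its complex-analytic description $\Gamma_{A_4}(13)\backslash\mathcal{H}^\ast$ and then descend the resulting model and $j$-map to $\Q$. First I would determine the genus: the subgroup $\Gamma_{A_4}(13)\subset\PSL_2(\Z)$ has index $[\PSL_2(\F_{13}):A_4] = 1092/12 = 91$ in $\PSL_2(\Z)$, and one computes the number of elliptic points and cusps by analysing the action of $A_4$ on $\PSL_2(\F_{13})/(\text{cyclic})$ — equivalently, counting fixed points of elements of orders $2$ and $3$ and the orbits of a unipotent element. Plugging these into the Riemann--Hurwitz formula for $X_{S_4}(13)\to X(1)$ should yield genus~$3$. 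Since genus $3$ and (as one checks) the curve is non-hyperelliptic, the canonical map embeds $X_{S_4}(13)$ as a smooth plane quartic in $\P^2$; this is what $\CC$ will be.

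Next I would produce the equation. The most direct route is to compute a basis of weight-$2$ cusp forms for $\Gamma_{A_4}(13)$ — a $3$-dimensional space — as $q$-expansions, using the fact that these forms are the $A_4$-invariant combinations of weight-$2$ forms of level~$13$ (or by pushing forward from a congruence cover such as $X(13)$ or $X_1(13)$ where explicit bases are available). From the three basis forms $f_0,f_1,f_2$ one obtains, via their $q$-expansions, the canonical embedding $\tau\mapsto (f_0:f_1:f_2)$ and searches for a degree-$4$ relation among $f_0,f_1,f_2$ by linear algebra on sufficiently many $q$-coefficients; this gives the quartic $\CC$ (up to the choice of coordinates fixing the displayed integral model). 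The $j$-map is then recovered by expressing the pullback of $j$ — a modular function for $\Gamma_{A_4}(13)$ with known $q$-expansion $q^{-1}+744+\dots$ — as a ratio of homogeneous forms in $f_0,f_1,f_2$; matching degrees forces numerator of degree $39$ and denominator of degree $13$ (so that $j$ has degree $0$ and the correct pole order $13$ along the unique cusp orbit, consistent with the ramification index $13$ of the cusp over $X(1)$), and the coefficients are again pinned down by comparing enough terms of the $q$-expansion.

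Finally I would address rationality: a priori the construction takes place over a cyclotomic field, but $X_{S_4}(13)$ is defined over~$\Q$ because $S_4$ is normalised appropriately in $\GL_2(\F_{13})$ (this is the content of the remarks preceding the theorem, following Mazur). Concretely, one exhibits the Galois action on the chosen basis of cusp forms and checks it is trivial (after a rational change of coordinates), or equivalently verifies directly that the obtained quartic and the rational function $n/d^{13}$ have coefficients in~$\Q$ — which is simply read off from the displayed model. One should also confirm smoothness of $\CC$ (a Gröbner-basis or resultant check that the partial derivatives have no common zero) to justify the claim that this is the canonical embedding, and sanity-check the $j$-map by verifying that $j - 1728$ factors with the expected square/cube pattern reflecting the elliptic points.

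The main obstacle is the explicit determination of the space of weight-$2$ forms for the non-congruence-looking group $\Gamma_{A_4}(13)$ and, above all, the computation of the degree-$39$ numerator $n(X,Y,Z)$: this is a large polynomial whose $\binom{41}{2}=820$ coefficients must be resolved, so one needs many accurate $q$-expansion coefficients and a well-chosen coordinate system to keep the linear algebra (and the final integral model) manageable. Everything else — the genus count, the shape of the canonical model, and the descent to $\Q$ — is comparatively routine once the modular forms are in hand.
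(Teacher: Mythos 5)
Your overall strategy is the same as the paper's: compute $q$-expansions of a basis of the $3$-dimensional space $S_2(\Gamma_{A_4}(13))$, obtain the plane quartic by linear algebra on degree-$4$ monomials, and pin down the $j$-map by comparing $q$-expansions. However, you have glossed over the two steps that carry essentially all of the difficulty, and in one place your set-up is wrong.

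First, the computation of the cusp forms. Your phrase ``the $A_4$-invariant combinations of weight-$2$ forms of level~$13$'' does not work as stated: $S_2(\Gamma_0(13))=0$ and $S_2(\Gamma_1(13))$ is $2$-dimensional; one must work inside the $50$-dimensional space $S_2(\Gamma(13))$ and extract the $A_4$-invariants of the $\PSL_2(\F_{13})$-action. The hard part is making that action explicit on $q$-expansions: the action of $T$ is easy (diagonal on isotypical components), but the action of $S$ is, after conjugating to level $169$, the Fricke involution $w_{169}$, and computing it on the relevant $12$-dimensional irreducible pieces (spanned by a level-$169$ newform $g$ and its twists by characters of conductor $13$) requires determining the Atkin--Lehner pseudoeigenvalues of the twists $g\otimes\chi^j$ via the relations of Atkin--Li. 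This occupies the bulk of the paper's argument and is not ``comparatively routine''; your proposal gives no method for it. (The paper also notes there is \emph{no} basis of $S_2(\Gamma_{A_4}(13))$ with rational $q$-expansions --- the coefficients live in a cubic field --- so your suggestion of trivialising the Galois action on the basis cannot succeed; one instead simply observes that the quartic relation found has rational coefficients.)

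Second, the denominator of the $j$-map. There is not ``a unique cusp orbit'': the $7$ cusps (degree $91=7\times 13$, each cusp of width $13$) fall into two Galois orbits, of sizes $3$ and $4$, defined over a cubic and a quartic subfield of $\Q(\zeta_{13})$ respectively. Also, a ratio of homogeneous forms of degrees $39$ and $13$ is not a well-defined function on $\P^2$; the denominator must be $d^{13}$ with $d$ a cubic, hence itself of degree $39$. To write the ansatz $j=n/d^{13}$ one must first locate all $7$ cusps as algebraic points on the model $\CC$ (the paper computes them via $[a_1(f|\gamma):a_1(g|\gamma):a_1(h|\gamma)]$ for suitable $\gamma\in\PSL_2(\Z)$) and then find a cubic through them; only then does comparing $q$-expansions become a linear problem for the $154$ independent coefficients of $n$. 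Your proposal omits the cusp computation entirely, so the linear algebra for $n$ cannot be set up as described.
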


The proof of this theorem will occupy Sections~\ref{sec:model}
and~~\ref{sec:jmap} of the paper. 

We have not been able to provably determine the $\Q(\sqrt{13})$-points on the curve. The method of Chabauty does not apply in this case, and this is likely to be a difficult problem; see Section~\ref{sec:jacobians} for more about the Jacobian of $\CC$ and the difficulty of determining the $\Q$ and $\Q(\sqrt{13})$-rational points. 

We have, however, the following six points\footnote{These are all
  the points in $\CC(\Q(\sqrt{13}))$ of logarithmic height less
  than~$5.24$, according to \cite{CharliesThesis}.}
in~$\CC(\Q(\sqrt{13}))$, four of which are in~$\CC(\Q)$:

\[ \left\{  \left(1 : 3 : -2\right), \left(0 : 0 : 1\right), \left(0 : 1 : 0\right),\left(1 : 0 : 0\right), \left(3 \pm \sqrt{13} : 0 : 2\right) \right\}. \]

By evaluating the $j$-map at these points, we obtain the $j$-invariants of elliptic curves over $\Q(\sqrt{13})$ whose projective mod-$13$ image is contained in $A_4$; in fact, apart from $\left(0 : 0 : 1\right)$, whose corresponding $j$-invariant is 0, these points have projective mod-$13$ image isomorphic to $A_4$.

\begin{corollary}
\label{C2}
Elliptic curves over $\Q$ with $j$-invariant
\begin{align*}
\frac{11225615440}{1594323} &= \frac{2^{4} \cdot 5 \cdot 13^{4} \cdot 17^{3}}{3^{13}},
\\
-\frac{160855552000}{1594323} &= -\frac{2^{12} \cdot 5^{3} \cdot 11 \cdot 13^{4}}{3^{13}},
\\
\frac{90616364985637924505590372621162077487104}{197650497353702094308570556640625}
&= \frac{2^{18} \cdot 3^{3} \cdot 13^{4} \cdot 127^{3} \cdot 139^{3} \cdot
157^{3} \cdot 283^{3} \cdot 929}{5^{13} \cdot 61^{13}}
\end{align*}
have projective mod-$13$ image isomorphic to $S_4$.  Elliptic curves
over $\Q(\sqrt{13})$ with these $j$-invariants have projective mod-$13$
image isomorphic to $A_4$, as do elliptic curves over $\Q(\sqrt{13})$
with $j$-invariant \[ j =
\frac{4096000}{1594323}(15996230\pm4436419\sqrt{13}).
\]

Thus, elliptic curves over $\Q(\sqrt{13})$ with these $j$-invariants are Hasse at 13 curves over $\Q(\sqrt{13})$.
\end{corollary}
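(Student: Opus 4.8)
The plan is to deduce Corollary~\ref{C2} from Theorem~\ref{Thm2}, the modular interpretation of $X_{S_4}(13)$ recorded just before it, and Proposition~\ref{P3}. First I would substitute each of the six listed points of $\CC(\Q(\sqrt{13}))$ into the $j$-map $j = n/d^{13}$ of Theorem~\ref{Thm2} (using its explicit degree-$39$ numerator $n$); this is a direct computation returning the $j$-invariants in the statement, with $(0:0:1)$ giving $j = 0$, the three remaining rational points giving the three displayed rational $j$-invariants, and the Galois-conjugate pair $(3 \pm \sqrt{13} : 0 : 2)$ giving the conjugate pair $\tfrac{4096000}{1594323}(15996230 \pm 4436419\sqrt{13})$. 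One then checks that every one of these values (other than $0$, at $(0:0:1)$) is distinct from $1728$, so that for an elliptic curve $E$ over the relevant field carrying one of them the projective mod-$13$ image $H_{E,13}$ is well-defined and depends only on $j(E)$.

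By the modular interpretation, a $\Q$-point of $X_{S_4}(13)$ gives an $E/\Q$ with $H_{E,13} \subseteq S_4$, and a $\Q(\sqrt{13})$-point gives an $E/\Q(\sqrt{13})$ with $H_{E,13} \subseteq A_4$. Thus each of the three rational $j$-invariants belongs to a curve over $\Q$ with $H_{E,13} \subseteq S_4$, and each of the conjugate pair to a curve over $\Q(\sqrt{13})$ with $H_{E,13} \subseteq A_4$. To promote these containments to equalities I would invoke two elementary group-theoretic facts: the only subgroup of $S_4$ containing elements of order both $3$ and $4$ is $S_4$ itself (such a subgroup has order divisible by $12$, hence is $A_4$ or $S_4$, and $A_4$ has no element of order $4$); and the only subgroup of $A_4$ containing elements of order both $2$ and $3$ is $A_4$ itself. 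It therefore suffices, for each $j$-invariant, to fix one elliptic curve $E$ with that $j$-invariant and exhibit primes $\mp$ of good reduction not above $13$ for which $\bar{\rho}_{E,13}(\mathrm{Frob}_\mp)$ --- an element of $\GL_2(\F_{13})$ with characteristic polynomial $x^{2} - a_\mp x + \mathrm{Nm}\,\mp \pmod{13}$ --- has the prescribed order in $\PGL_2(\F_{13})$: orders $3$ and $4$ in the three $S_4$-cases over $\Q$, and orders $2$ and $3$ in the conjugate-pair case over $\Q(\sqrt{13})$. This is a short mechanical search.

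Finally, for the three rational $j$-invariants, base change to $\Q(\sqrt{13})$: since $\bar{\rho}_{E,13}(G_{\Q(\sqrt{13})})$ has index dividing $2$ in $\bar{\rho}_{E,13}(G_{\Q})$, while the modular interpretation forces the resulting projective image over $\Q(\sqrt{13})$ to lie in $A_4 \subsetneq S_4$, that image must be exactly the (unique) index-$2$ subgroup $A_4$ of $S_4$. (Equivalently: $\det \bar{\rho}_{E,13} = \chi_{13}$ is the mod-$13$ cyclotomic character, so the composite of $\bar{\rho}_{E,13}$ with $\GL_2(\F_{13}) \to \PGL_2(\F_{13}) \to \PGL_2(\F_{13})/\PSL_2(\F_{13}) \cong \Z/2$ is the quadratic character cutting out $\Q(\sqrt{13^\ast}) = \Q(\sqrt{13})$, as $13 \equiv 1 \pmod 4$; restricting to $G_{\Q(\sqrt{13})}$ intersects $S_4$ with $\PSL_2(\F_{13})$, giving $A_4$.) Hence in every case an elliptic curve over $\Q(\sqrt{13})$ with the given $j$-invariant has $H_{E,13} \cong A_4$; since $13 \equiv 1 \pmod{12}$, Proposition~\ref{P3} then says $(13, j(E))$ is exceptional for $\Q(\sqrt{13})$, i.e.\ $E$ is a Hasse at $13$ curve over $\Q(\sqrt{13})$. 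The bulk of the work lies in Theorem~\ref{Thm2} --- producing the degree-$39$ numerator $n$ and carrying out the substitutions without arithmetic slips --- while the step that genuinely needs care is confirming that the projective image is exactly $S_4$ (respectively $A_4$) rather than a proper subgroup; the Frobenius-order computation above is what makes this rigorous.
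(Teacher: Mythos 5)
Your proposal is correct, and the first half (evaluating the $j$-map at the six points, using the modular interpretation to get $H_{E,13}\subseteq S_4$ over $\Q$ and $H_{E,13}\subseteq A_4$ over $\Q(\sqrt{13})$, discarding $j=0$, and noting that $H_{E,13}$ depends only on $j$) coincides with the paper's. Where you diverge is in the crucial step of upgrading containment to equality. The paper does this via an explicit degree-$14$ polynomial $F_{13}(X,j)=P(X)-Xj$ built from the Hauptmodul of $X_0(13)$: its roots are in Galois-equivariant bijection with the $13$-isogenies, so $\Gal(F_{13}(X,j(E)))\cong H_{E,13}$ as a permutation group on $\P^1(\F_{13})$, and one simply computes that this Galois group is $S_4$ over $\Q$ (resp.\ $A_4$ over $\Q(\sqrt{13})$) for each listed $j$. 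You instead classify the subgroups of $S_4$ (resp.\ $A_4$) by the orders of their elements and certify the required orders by exhibiting Frobenius elements, reading off the order of $\bar\rho_{E,13}(\mathrm{Frob}_\mp)$ in $\PGL_2(\F_{13})$ from $x^2-a_\mp x+\Nm\mp \bmod 13$; this is legitimate (and is essentially Sutherland's image-computing method, which the paper cites as independent verification), with the one small point worth making explicit that the characteristic polynomial determines the order in $\PGL_2(\F_{13})$ only for semisimple elements --- which is guaranteed here because the already-established containment $H_{E,13}\subseteq S_4$ rules out elements of order $13$. Your group-theoretic reductions ($A_4$ has no element of order $4$ and no subgroup of order $6$) are correct, and your base-change argument for the three rational $j$-invariants via $S_4\cap\PSL_2(\F_{13})=A_4$ matches the paper's Lemma~\ref{lemma}. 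The trade-off: the paper's route is a single exact symbolic Galois-group computation per $j$-invariant, while yours replaces that by point counting at a few auxiliary primes plus elementary group theory, at the cost of having to choose a model of $E$ and compute traces of Frobenius (including over $\Q(\sqrt{13})$ for the conjugate pair).
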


\begin{remark}
It is known that, for $l > 13$, there are no elliptic curves $E$ over $\Q$ with $H_{E,l} \cong S_4$; in fact, Serre proved that $X_{S_4}(l)(\Q)$ is empty for $l > 13$. On page 36 of \cite{Mazur3}, Mazur reports that Serre has constructed a $\Q$-point on $X_{S_4}(13)$ corresponding to elliptic curves with complex multiplication by $\sqrt{-3}$; this point that Serre found corresponds to the point $\left(0 : 0 : 1\right)$ on the curve~$\CC$ above.
\end{remark}

\begin{remark}
The rational points on $X_{S_4}(l)$ for $l \leq 11$ have already been determined. The most interesting case is $l=11$, where Ligozat proved (\cite{Lig}) that the curve $X_{S_4}(11)$ is the elliptic curve with Cremona label 121c1.
\end{remark} 

We conclude this introduction by considering the following problem, which we
would like to solve at least for every quadratic field. This may be viewed as a generalisation of Sutherland's Theorem 2 (see \ref{Suth2}). 

\begin{problem}
\label{ExceptPairs}
Fix a number field $K$. Find all exceptional pairs over $K$. 
\end{problem}

Recently, Samuele Anni has proved (see \cite{Anni}) that there can be
only finitely many exceptional primes for a given number field $K$. In
the quadratic case, his result gives the following. 

\begin{proposition}[Anni]
A quadratic field $K$ admits at most $3$ exceptional primes. If $K =
\Q(\sqrt{l})$ for $l$ a prime $\equiv 1\pmod 4$, then the only possible
exceptional primes are $7$, $11$, and $l$. If $K \neq \Q(\sqrt{l})$, then
only $7$ and $11$ are possible exceptional primes.
\end{proposition}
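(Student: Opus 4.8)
We only indicate the strategy; the full argument is carried out in \cite{Anni}. Fix a quadratic field $K = \Q(\sqrt d)$ with $d$ squarefree, let $l$ be an exceptional prime for $K$ --- necessarily odd --- let $j_0$ be such that $(l,j_0)$ is exceptional for $K$, and let $E/K$ be an elliptic curve with $j(E) = j_0$. The key preliminary observation is that, for our fixed $K$, there is \emph{at most one} prime $l$ with $\sqrt{l^\ast} \in K$: indeed $\sqrt{l^\ast} \in K$ forces $K = \Q(\sqrt{l^\ast})$, and since $l^\ast = \pm l$ is already squarefree this recovers $l^\ast$, and hence $l$, from $K$ alone. Concretely, such a prime exists precisely when $d$ is a prime $\equiv 1\pmod 4$ (and then it is $l = d$) or $-d$ is a prime $\equiv 3\pmod 4$ (and then it is $l = -d$); when it exists, call it $l_0$. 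I would then separate the cases $\sqrt{l^\ast} \in K$ and $\sqrt{l^\ast} \notin K$.

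In the first case $l = l_0$, and Proposition~\ref{P3} applies: for $l_0$ to be exceptional the projective mod-$l_0$ image of $E$ must be one of $A_4$, $S_4$, $A_5$ (each subject to the stated congruence on $l_0$) or a suitable dihedral group, in the last case with $l_0 \equiv 1\pmod 4$. Proposition~\ref{P4} removes the non-dihedral possibilities over quadratic fields apart from the $A_4$ case, which forces $l_0 = 13$ and $K = \Q(\sqrt{13})$. In every surviving case $l_0 \equiv 1\pmod 4$; hence if the distinguished prime $l_0$ of $K$ has $l_0 \equiv 3\pmod 4$ (equivalently $K = \Q(\sqrt{-l_0})$) it is \emph{not} exceptional. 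So this case contributes at most one exceptional prime, and only when $K = \Q(\sqrt{l_0})$ with $l_0 \equiv 1\pmod 4$.

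In the second case $\sqrt{l^\ast} \notin K$, Proposition~\ref{Suther} yields: $l \equiv 3\pmod 4$; the projective mod-$l$ image of $E$ is $D_{2n}$ for some $n > 1$ dividing $\frac{l-1}{2}$, which already excludes $l = 3$, so (as $l\equiv 3\pmod 4$) in fact $l \geq 7$; and the mod-$l$ image of $E$ lies in the normaliser of a split Cartan subgroup, so that $E$ yields a $K$-rational point on $\Xsplit(l)$, equivalently --- by part~(4) of that proposition --- $E$ acquires a rational $l$-isogeny over the field $K(\sqrt{l^\ast}) = K(\sqrt{-l})$, which has degree $4$ over $\Q$. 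The task is now to bound $l$, and this is the one genuinely new input and the step I expect to be the main obstacle: one must show that for $l$ large enough there is no non-CM elliptic curve over a quadratic field with mod-$l$ image of this shape. The plan is to combine the existence of a rational $l$-isogeny over a number field of degree at most $4$ with the available bounds on the degrees of rational isogenies of non-CM elliptic curves over such fields (in the tradition of Mazur and Momose), exploiting in addition the rigidity coming from the descent to $K$ and from the image being dihedral rather than merely contained in a Borel subgroup; this should leave only $l \in \{7, 11\}$. (Over $\Q$ this is essentially \cite{BPR}, which gives $l \in \{2,3,5,7,13\}$ for $\Xsplit(l)(\Q)$, and the quadratic analysis in \cite{Anni} is harder.)

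Combining the two cases, every exceptional prime for $K$ either lies in $\{7,11\}$ or equals the distinguished prime $l_0$, so $K$ has at most three exceptional primes. If $K = \Q(\sqrt l)$ with $l$ a prime $\equiv 1\pmod 4$, then $l_0 = l$ and $l \notin \{7,11\}$, so the only possible exceptional primes are $7$, $11$ and $l$. Otherwise $l_0$ either does not exist, or it satisfies $l_0 \equiv 3\pmod 4$ and is therefore not exceptional by the first case; either way only $7$ and $11$ remain.
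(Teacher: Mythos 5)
The paper offers no proof of this proposition: it is imported wholesale from \cite{Anni}, so there is no internal argument to measure yours against. Your structural reduction is correct and is essentially how the quadratic formulation follows from Anni's general theorem. In particular, the observation that a fixed quadratic $K$ contains $\sqrt{l^\ast}$ for at most one prime $l$, and that Proposition~\ref{P3} forces $l\equiv 1\pmod 4$ in every surviving case (so that this distinguished prime can only be exceptional when $K=\Q(\sqrt{l})$ is real), correctly accounts for the ``third'' possible prime; note that the congruences in Proposition~\ref{P3} already do this on their own, so the appeal to Proposition~\ref{P4} is not needed for that step (though it is harmless and available, being proved independently in Section~\ref{sec:pf-prop}).

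The genuine gap is the second case. The entire content of the proposition beyond bookkeeping is the claim that when $\sqrt{l^\ast}\notin K$ only $l=7$ and $l=11$ can occur, and your proposal does not prove this --- it sketches a plan and defers to \cite{Anni}. The sketched mechanism is moreover not obviously adequate: the bare existence of an $l$-isogeny over the degree-$4$ field $K(\sqrt{-l})$ does not bound $l$ by $11$, since uniform isogeny bounds over general quartic fields in the Mazur--Momose tradition are not known in anything like that range; Anni's argument has to exploit the dihedral/split-Cartan structure and the descent to $K$ much more seriously to reach a bound of the shape $l\le 6[K:\Q]+1$, after which the congruence $l\equiv 3\pmod 4$, $l\ne 3$, leaves $\{7,11\}$. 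If, like the paper, you are content to cite Anni's theorem as a black box, your derivation of the quadratic statement from it is sound; as a self-contained proof it is incomplete at precisely the step that constitutes the theorem.
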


It is straightforward to determine, for a given quadratic field $K$,
the exceptional pairs of the form $(7,j_0)$; in principle all one needs
to do is determine the $j$-invariants of the $K$-points on the
elliptic curve 49a3.

In the case where $K = \Q(\sqrt{l})$ and the prime is $l$,
Problem~\ref{ExceptPairs} reduces to Question~\ref{Q3} above, which essentially asks for
quadratic points on the modular curves $\Xsplit(l)$; this is known to
be a difficult problem.

Regarding $11$ as a possible exceptional prime, we make the following
conjecture.

\begin{conjecture}
\label{Co1}
$11$ is not an exceptional prime for any quadratic field.
\end{conjecture}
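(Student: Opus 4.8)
The plan is to show that $11$ cannot occur as an exceptional prime for a quadratic field $K$ by ruling out each of the two possible structural mechanisms identified in the earlier results. First I would recall that, by Anni's proposition, if $11$ is exceptional for a quadratic field $K$ then either $K=\Q(\sqrt{l})$ with $l\equiv 1\pmod 4$, or $K\neq\Q(\sqrt{l})$ for any prime $l$; in either case we need to control the position of $\sqrt{11^\ast}=\sqrt{-11}$ relative to $K$. If $\sqrt{-11}\notin K$, then Sutherland's Proposition~\ref{Suther} applies, and since $11\equiv 3\pmod 4$ its hypotheses are consistent; the obstruction there is condition (3), which says the mod-$11$ image lies in the normaliser of a split Cartan, i.e. $E$ gives a $K$-point on $\Xsplit(11)$, together with condition (1), which forces the projective image to be dihedral $D_{2n}$ with $n>1$ an odd divisor of $\frac{11-1}{2}=5$, hence $n=5$ and the projective image is $D_{10}$. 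So in this case I must show there is no quadratic point on the relevant modular curve (the curve parametrising the precise level structure with projective image $D_{10}$) yielding such a configuration; equivalently, no such $E$ exists over any quadratic field.

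If on the other hand $\sqrt{-11}\in K$, so necessarily $K=\Q(\sqrt{-11})$, then Proposition~\ref{P3} applies with $l=11$, and one checks the congruence conditions there: $A_4$ requires $l\equiv 1\pmod{12}$, $S_4$ requires $l\equiv 1\pmod{24}$, $A_5$ requires $l\equiv 1\pmod{60}$, and the dihedral case requires $l\equiv 1\pmod 4$. Since $11\not\equiv 1$ modulo any of $12$, $24$, $60$, or $4$, none of the four bullet points of Proposition~\ref{P3} can hold, so $11$ is not exceptional for $\Q(\sqrt{-11})$ at all. This disposes of the $\sqrt{-11}\in K$ case immediately and for free.

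Thus the entire content of the conjecture reduces to the $\sqrt{-11}\notin K$ case, i.e. to showing: no elliptic curve $E$ over a quadratic field $K\neq\Q(\sqrt{-11})$ has projective mod-$11$ image $D_{10}$ with $G_{E,11}$ in the normaliser of a split Cartan. The natural strategy is to study the modular curve $X$ of this level structure: the quotient of $\Xsplit(11)$ (genus $1$) by the further quotient corresponding to cutting the split Cartan normaliser down to the dihedral group $D_{10}$, or more directly the intermediate curve between $\Xsplit(11)$ and $X(1)$ whose points record exactly image $\subseteq D_{10}$. One would compute the genus of this curve; if it has genus $\geq 2$ one wants to enumerate its quadratic points (which may itself be hard), while if it has genus $0$ or $1$ one must analyse its function field and the induced $j$-map to see whether the fibre conditions (image exactly $D_{10}$, and the precise Galois action twisting $\sqrt{-11}$ away from $K$) can ever be met over a quadratic field. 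A cleaner route, mirroring the treatment of $l=3,7$ in the introduction, may be to observe that such an $E$ would give a quadratic point on $\Xsplit(11)$; by the theorem of Bilu--Parent--Rebolledo cited in the excerpt, $\Xsplit(11)(\Q)$ consists only of cusps and CM points, and one would extend this to the quadratic setting, or use a direct Chabauty/Mordell--Weil sieve argument on the genus-$1$ curve $\Xsplit(11)$ and its relevant quotient.

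The hard part will be the last case: unlike the clean congruence obstruction that kills $\Q(\sqrt{-11})$, ruling out image $D_{10}$ over a general quadratic field $K$ requires a genuine Diophantine input — determining the quadratic points on $\Xsplit(11)$ or on the associated $D_{10}$-curve and checking that none of them produce the required twisted-by-$\sqrt{-11}$ Galois structure over a field not containing $\sqrt{-11}$. This is precisely the kind of quadratic-points problem that the excerpt itself flags as difficult (compare the remarks about $\Q(\sqrt{13})$-points on $X_{S_4}(13)$ and about quadratic points on $\Xsplit(l)$), which is why the statement is posed as a conjecture rather than a theorem; I would expect the proof to hinge on a careful rank computation for the relevant Jacobian followed by an elliptic-curve Chabauty or Mordell--Weil sieve argument, and failing that, the conjecture would remain open.
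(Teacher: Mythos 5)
The statement you are addressing is a conjecture, and the paper does not prove it: Section~\ref{sec:evidence} only assembles evidence. Your reduction coincides with that evidence. Proposition~\ref{P3} disposes of $K=\Q(\sqrt{-11})$ because $11\not\equiv1\pmod4$, and for every other quadratic field Proposition~\ref{Suther} forces $H_{E,11}\cong D_{10}$ with mod-$11$ image in the normaliser of a split Cartan, so everything reduces to a quadratic-point problem on $\Xsplit(11)$, or more precisely on the modular curve $X_{D_{10}}(11)$ parametrising the exact $D_{10}$ level structure. You correctly flag this Diophantine step as the part you cannot complete, so your proposal is an accurate reduction rather than a proof --- which is consistent with the status of the statement in the paper.

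Two points of comparison. First, the paper's evidence contains a structural ingredient you do not isolate: $X_{D_{10}}(11)$ is the $\Q(\sqrt{-11})$-twist of $X_0(121)$, and by Theorem~4.9 of \cite{Bars} the curve $X_0(121)$ has only finitely many quadratic points; hence only finitely many quadratic fields can carry a Hasse at $11$ curve, and a genuine proof would consist of pinning down those fields and checking the finitely many candidate $j$-invariants (the paper supplements this with a point search on an explicit model of $\Xsplit(11)$ and a local test using the factorisation of $\Phi_{11}(X,j_0)$ modulo many primes). This twist-plus-finiteness observation is the closest thing to a roadmap for a proof and is worth having explicitly. Second, a small factual slip: $\Xsplit(11)\cong X_0^+(121)$ has genus $2$, not $1$ (the paper's model is $y^2=$ a sextic), so your parenthetical ``genus $1$'' and the suggested ``Chabauty on the genus-$1$ curve'' need adjusting; moreover, since a genus-$2$ curve always has infinitely many quadratic points coming from the hyperelliptic map, the finiteness one must exploit lives on the twisted curve $X_{D_{10}}(11)$, not on $\Xsplit(11)$ itself.
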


In Section~\ref{sec:evidence}, we will explain our evidence for this
conjecture.

\subsection*{Acknowledgements} 
We are grateful to Jeroen Sijsling for helping us with the
computation of the $j$-function in Theorem~\ref{Thm2}, as well as
verifying that the genus~$3$ curves in Section~\ref{sec:jacobians} are
not isomorphic. We would like to thank Damiano Testa for making
interesting observations and suggestions regarding the curve
$X_{S_4}(13)$ and its rational and quadratic points, Tim and Vladimir
Dokchitser for their comments and suggestions regarding
Theorem~\ref{Thm1}, Alex Bartel for finding the
isomorphism~\ref{thnxalx}, Martin Orr for communicating to us results
about subvarieties of products of abelian varieties, and Andrew
Sutherland for verifying the mod-$13$ Galois images of the elliptic
curves in Corollary~\ref{C2}. Finally, we thank the anonymous referees
for their careful reading and comments, and for verifying our results.

All computations in this paper were carried out using either~\Sage
(see \cite{sage}) or \Magma (see \cite{magma}), or both.  Annotated
\Sage\ code which reproduces the computations in this paper concerning
$X_{S_4}(13)$, together with similar computations for $\Xsplit(13)$
and $\Xnonsplit(13)$, is available (see~\cite{SageCode}), as well as a
\Sage\ worksheet with the complete computation (see
\cite{SageWorksheet}).


\section{Preliminaries}\label{sec:prelim}
Let $l$ be an odd prime. We define $\PSL_2(\F_l)$ to be the kernel of
the map $ \det : \PGL_2(\F_l) \to \F_l^*/(\F_l^*)^2 \cong\{\pm1\}$. It
is isomorphic to $\SL_2(\F_l)/\left\{\pm I\right\}$. By
$\GL_2^+(\F_l)$ we mean the subgroup of matrices with square
determinant.

\begin{lemma}
\label{lemma}
Let $E/K$ be an elliptic curve. The following are equivalent. 

\begin{enumerate}
\item
$H_{E,l} \subseteq \PSL_2(\F_l);$
\item
$\sqrt{l^\ast} \in K$;
\item
$G_{E,l} \subseteq \GL_2^+(\F_l)$.
\end{enumerate}
\end{lemma}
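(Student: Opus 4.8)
The statement is a chain of equivalences relating the projective mod-$l$ image, membership of $\sqrt{l^\ast}$ in $K$, and the linear mod-$l$ image. The natural strategy is to prove $(2)\Leftrightarrow(3)$ and then $(1)\Leftrightarrow(3)$; the first is pure Galois theory via the Weil pairing, and the second is essentially the definition of $\PSL_2$ given just above the lemma. Throughout I would use the key fact that $\det\circ\bar\rho_{E,l} = \chi_l$, the mod-$l$ cyclotomic character, which follows from the Weil pairing on $E[l]$.

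\emph{Proof of $(2)\Leftrightarrow(3)$.} The image of $\chi_l\colon G_K \to \F_l^\times$ is trivial on $K(\zeta_l)$; more precisely, $K(\zeta_l)/K$ is cyclic and $\chi_l$ identifies $\Gal(K(\zeta_l)/K)$ with a subgroup of $\F_l^\times$. Now $\sqrt{l^\ast} \in K(\zeta_l)$ — this is the classical fact that the quadratic subfield of $\Q(\zeta_l)$ is $\Q(\sqrt{l^\ast})$, where the sign $l^\ast = (-1)^{(l-1)/2}l$ is precisely what makes the Gauss sum real or imaginary. The unique quadratic subextension of $K(\zeta_l)/K$ corresponds, under $\chi_l$, to the squares $(\F_l^\times)^2 \subseteq \F_l^\times$. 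Hence $\sqrt{l^\ast}\in K$ if and only if $\chi_l(G_K) \subseteq (\F_l^\times)^2$, i.e. if and only if $\det(G_{E,l}) \subseteq (\F_l^\times)^2$, which is exactly the condition $G_{E,l} \subseteq \GL_2^+(\F_l)$. One subtlety to handle cleanly: if $\zeta_l \in K$ already then $\chi_l$ is trivial and both sides hold; the argument above should be phrased so it covers this degenerate case (the ``quadratic subextension'' may be trivial).

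\emph{Proof of $(1)\Leftrightarrow(3)$.} By definition $\PSL_2(\F_l) = \ker(\det\colon \PGL_2(\F_l) \to \F_l^\times/(\F_l^\times)^2)$, and $H_{E,l}$ is the image of $G_{E,l}$ under the quotient $\GL_2(\F_l) \to \PGL_2(\F_l)$. Since scalars have square determinant, the determinant map on $\PGL_2(\F_l)$ is well-defined and the diagram relating $\det$ on $\GL_2$ and on $\PGL_2$ commutes; therefore $H_{E,l} \subseteq \PSL_2(\F_l)$ if and only if $\det(G_{E,l}) \subseteq (\F_l^\times)^2$, which is again $G_{E,l}\subseteq \GL_2^+(\F_l)$. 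This direction is essentially bookkeeping with the definitions, so no real obstacle arises here.

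\emph{Main obstacle.} The only genuinely non-formal input is the identification of the quadratic subfield of the $l$-th cyclotomic field as $\Q(\sqrt{l^\ast})$ together with the matching of this subextension to $(\F_l^\times)^2$ under $\chi_l$; I would cite this as standard (e.g. via quadratic Gauss sums or the conductor-discriminant formula) rather than reprove it. After that, everything reduces to the commutativity of determinant maps and the fact that $\det\circ\bar\rho_{E,l}$ is the cyclotomic character, so the proof is short.
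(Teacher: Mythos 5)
Your proposal is correct and follows exactly the same route as the paper, which simply notes that the equivalence of (1) and (3) is clear from the definitions and that (2)$\Leftrightarrow$(3) is standard Galois theory once one knows $\det\circ\bar\rho_{E,l}$ is the mod-$l$ cyclotomic character. You have merely spelled out the details (the identification of $\Q(\sqrt{l^\ast})$ as the quadratic subfield of $\Q(\zeta_l)$ and the compatibility of the determinant maps) that the paper leaves implicit.
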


\begin{proof}
The equivalence of (1) and (3) is clear. The equivalent of (2) and (3) follows from standard Galois theory upon observing that the determinant of $\bar{\rho}_{E,l}$ is equal to the mod-$l$ cyclotomic character over $K$. 
\end{proof}

In particular, if $E/\Q$ is an elliptic curve with $H_{E,13} \cong
S_4$, then after base-changing to $\Q(\sqrt{13})$ the projective image
is intersected with $\PSL_2(\F_{13})$, and becomes isomorphic
to~$A_4$.  This argument uses the fact that $13\equiv5\pmod8$.

We would like to briefly mention the Cartan subgroups of
$\GL_2(\F_l)$; for a complete treatment see Chapter XVIII, \S12 of
\cite{Lang}. There are two sorts of Cartan subgroup, \textbf{split}
and \textbf{non-split}. A split Cartan subgroup is conjugate to the
group of diagonal matrices, and hence is isomorphic to
$\F_l^\ast\times\F_l^\ast$. Its normaliser is then conjugate to the
group $\Csplus$ of diagonal and antidiagonal matrices. A non-split
Cartan subgroup is isomorphic to $\F_{l^2}^\ast$, and is conjugate to
the group $\Cns$ defined as follows:
\[ \Cns = \left\{\left(\begin{array}{cc}x & \delta y \\ y &
  x\end{array}\right) : x,y \in \F_l, \ (x,y) \neq (0,0)\right\},
\]
where $\delta$ is any fixed quadratic non-residue in $\F_l^\ast$. It
also has index two in its normaliser $\Cnsplus$.

Associated to both of the groups $\Csplus$ and $\Cnsplus$ are modular curves $\Xsplit(l)$ and $\Xnonsplit(l)$ respectively; these serve as coarse moduli spaces for elliptic curves $E$ whose mod-$l$ Galois image $G_{E,l}$ is contained in (a conjugate of) $\Csplus$ and $\Cnsplus$ respectively. Both curves are geometrically connected and defined over $\Q$. Over the complex numbers each curve has the description of being the quotient of the extended upper half plane $\mathcal{H}^\ast$ by an appropriate congruence subgroup. The curve $\Xsplit(l)$ is $\Q$-isomorphic to the quotient $X_0^+(l^2)$ of the modular curve $X_0(l^2)$ by the Fricke involution. Over $\C$, this isomorphism is established by mapping $\tau$ on $X_0^+(l^2)$ to $l\tau$ on $\Xsplit(l)$.

One of Sutherland's insights was that the notion of Hasse at $l$ curve $E$ over $K$ depends only on the projective mod-$l$ image $H_{E,l}$. Given a subgroup $H$ of $\PGL_2(\F_l)$, we say that $H$ is \textbf{Hasse} if its natural action on $\P^1(\F_l)$ satisfies the following two properties:
\begin{itemize}
\item
Every element $h \in H$ fixes a point in $\P^1(\F_l)$;
\item
There is no point in $\P^1(\F_l)$ fixed by the whole of $H$.
\end{itemize}

We then have the following:

\begin{proposition}[Sutherland]
\label{Hasse}
An elliptic curve $E/K$ is Hasse at $l$ if and only if $H_{E,l}$ is Hasse. 
\end{proposition}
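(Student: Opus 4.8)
The plan is to rephrase both halves of ``$E/K$ is Hasse at $l$'' as statements about the action of $G_{E,l}$ on $\P^1(\F_l)$, to pass from $G_{E,l}$ to $H_{E,l}$ by observing that scalar matrices act trivially on $\P^1(\F_l)$, and to bridge the local and global conditions with the Chebotarev density theorem. First I would record the \emph{global} dictionary: after choosing a basis identifying $E(\bar{K})[l]$ with $\F_l^2$ carrying its $G_K$-action through $\bar{\rho}_{E,l}$, a $K$-rational $l$-isogeny out of $E$ is the same thing as a $G_K$-stable line, i.e.\ a point of $\P^1(\F_l)$ fixed by every element of $G_{E,l}$; since scalars act trivially on $\P^1(\F_l)$, this is the same as a point fixed by every element of $H_{E,l}$. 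Hence ``$E/K$ has no $K$-rational $l$-isogeny'' is precisely the second defining property of a Hasse subgroup.

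Next I would record the \emph{local} dictionary. For a prime $\mp$ of good reduction for $E$ with $\mp\nmid l$, the reduction map gives a Galois-equivariant isomorphism $E(\bar{K})[l]\cong\tilde{E}_\mp(\bar{\F}_\mp)[l]$ on which the inertia group at $\mp$ acts trivially (good reduction, $\mp\nmid l$), so the $\Gal(\bar{\F}_\mp/\F_\mp)$-action on the reduced $l$-torsion is determined by a Frobenius element; therefore $\tilde{E}_\mp/\F_\mp$ admits an $\F_\mp$-rational $l$-isogeny if and only if $\bar{\rho}_{E,l}(\mathrm{Frob}_\mp)$ fixes a point of $\P^1(\F_l)$ (equivalently, has an eigenvalue in $\F_l$). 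I would also note that whether a matrix fixes a point of $\P^1(\F_l)$ is invariant under conjugation and depends only on the image of the matrix in $\PGL_2(\F_l)$.

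With these in hand the two implications are short. If $H_{E,l}$ is Hasse, its first property forces every element of $H_{E,l}$, hence every element of $G_{E,l}$, to fix a point of $\P^1(\F_l)$; consequently \emph{every} prime $\mp$ of good reduction with $\mp\nmid l$ has a reduction admitting an $\F_\mp$-rational $l$-isogeny, which in particular holds for a density-one set of primes, while the second property says $E/K$ has no $K$-rational $l$-isogeny. Thus $E/K$ is Hasse at $l$. Conversely, suppose $E/K$ is Hasse at $l$. The absence of a $K$-rational $l$-isogeny already gives the second defining property of a Hasse subgroup. For the first, fix $g\in G_{E,l}$: by Chebotarev the primes $\mp$ of good reduction, prime to $l$, with $\bar{\rho}_{E,l}(\mathrm{Frob}_\mp)$ conjugate to $g$ form a set of positive density, whereas the good primes prime to $l$ whose reduction lacks an $l$-isogeny form a density-zero set by hypothesis; so I may choose such a $\mp$ whose reduction \emph{does} admit an $\F_\mp$-rational $l$-isogeny, whence $\bar{\rho}_{E,l}(\mathrm{Frob}_\mp)$, and therefore $g$, fixes a point of $\P^1(\F_l)$. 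Passing to the quotient by scalars then yields the first property for $H_{E,l}$, so $H_{E,l}$ is Hasse.

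I do not anticipate a genuine obstacle here; the single point requiring a moment's care is the asymmetry of the density-one hypothesis --- in the ``if'' direction one in fact produces an $l$-isogeny at \emph{every} good prime away from $l$, while in the ``only if'' direction one uses only that a density-zero exceptional set cannot meet any positive-density Chebotarev class --- and the remaining ingredients, namely the correspondence between $K$-rational $l$-isogenies and $G_K$-stable lines and the good reduction of the $l$-torsion away from $l$, are entirely standard.
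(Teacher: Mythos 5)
Your proof is correct and is essentially the standard argument (the one Sutherland gives for this statement, which the paper quotes without reproving): translate both the global and the local isogeny conditions into fixed points of $G_{E,l}$, respectively of Frobenius elements, on $\P^1(\F_l)$, note that scalars act trivially so everything descends to $H_{E,l}$, and use Chebotarev in the "only if" direction to realise every element of the image as a Frobenius. The one delicate point — that the density-one hypothesis is used only to avoid a density-zero exceptional set meeting a positive-density Chebotarev class — is exactly the point you flag, so nothing is missing.
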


This allows us to reduce the study of exceptional pairs largely to group theory. 

\section{Proof of Theorem \ref{Thm1}}\label{sec:pf-thm}
Throughout this proof, $K = \Q(\sqrt{5})$.

Let $E/K$ have $H_{E,5} \cong D_4$. It follows from Dickson's
classification of subgroups of $\GL_2(\F_l)$ (see \cite{Dickson}) that
$G_{E,5}$ is contained in the normaliser of a Cartan subgroup. If this
Cartan subgroup were non-split, then $G_{E,5}$ would be contained in
$\Cnsplus \cap \ \GL_2^+(\F_5)$ (we take the intersection by
Lemma~\ref{lemma}), and so $H_{E,5}$ would be contained in $(\Cnsplus
\cap \ \GL_2^+(\F_5))/\text{scalars}$, which is a group of size~$6$, and
hence cannot contain a subgroup isomorphic to $D_4$; thus $G_{E,5}
\subseteq \Csplus$, and so $E/K$ corresponds to a $K$-point on
$\Xsplit(5)$. The converse is not quite true; a $K$-point on
$\Xsplit(5)$ corresponds to an elliptic curve $E'$ over $K$ with
$H_{E',5} \subseteq D_4$, but not necessarily equal to $D_4$.

We now give an expression for the $j$-map $\Xsplit(5)
\overset{j}\longrightarrow X(1)$. Since $X_0^+(25)$ is isomorphic to $\Xsplit(5)$ under the map $\tau \mapsto 5\tau$, it suffices to write down the function $j(5\tau)$ in terms of a Hauptmodul $s$ for $X_0^+(25)$.

Let $t_N$ be a Hauptmodul for $X_0(N)$. Klein found the following
formula in 1879:
\[ j(5\tau) = \frac{(t_5^2 + 250t_5 + 3125)^3}{t_5^5}.\]
We can look up an expression for $t_5$ in terms of $t_{25}$ from \cite{Maier}:
\[ t_5 = t_{25}(t_{25}^4 + 5t_{25}^3 + 15t_{25}^2 + 25t_{25} + 25).\]
We also know that the Fricke involution $w_{25}$ maps $t_{25}$ to
$5/t_{25}$. Hence a Hauptmodul for $X_0^+(25)$ is $s := t_{25} +
5/t_{25}$. It follows that
\[ j(5\tau) = \frac{((s+5)(s^2 - 5)(s^2 + 5s+10))^3}{(s^2 + 5s + 5)^5}.\]
Inserting a $K$-value for $s$ in this expression yields the $j$-invariant of an elliptic curve $E$ over $K$ with $H_{E,5} \subseteq D_4$. The condition on $s^2 - 20$ in the statement of the Theorem ensures that we have equality here, by ensuring that the image is not contained in any one of the three subgroups of order~$2$ in~$D_4$, as we now demonstrate.

Let $E$ be a curve in $\Xsplit(5)(K)$ corresponding to a choice of $s$
in $K$, so that $H_{E,5}\subseteq D_4$. The following statements are
readily seen to be equivalent to $H_{E,5}\not= D_4$:
\begin{itemize}
\item
$H_{E,5}$ is cyclic.
\item
$G_{E,5}$ is contained in (a conjugate of) $\Cs(\F_5)$.
\item
$E$ has a pair of independent $K$-rational 5-isogenies.
\item
$E$ pulls back to a $K$-point on $X_0(25)$.
\item
$t_{25} \in K$.
\end{itemize}
Since $t_{25}$ is a root of the polynomial $x^2 - sx + 5$ of
discriminant $s^2 - 20$, we have $t_{25} \in K$ if and only if $s^2 -
20$ is a square in $K$.  Thus the statement that $s^2 - 20$ is not a
square in $K$ is equivalent to $H_{E,5}$ not being cyclic, and hence
$H_{E,5}\cong D_4$.

We have, however, overlooked an issue above. For a given $j=j(E) \in
K$ satisfying \eqref{eqn:jmapV4}, there are two other values of $s \in
K$ also satisfying \eqref{eqn:jmapV4}. This is because the field
extension $K(s)/K(j)$, which has degree~$15$ and is not Galois, has
automorphism group of order~$3$, generated by
$s\mapsto((\sqrt{5}-5)s-20)/(2s+5+\sqrt{5})$. We must ensure that for
none of the Galois conjugate values is~$s^2-20$ square in~$K$, so that
$H_{E,5}$ is not contained in any of the three cyclic subgroups
of~$D_4$.  This explains the final condition in the statement of the
Theorem.

\begin{example}
To illustrate this theorem, we input $s = 3\sqrt{5} + 1$ to obtain \[
j = \frac{337876318862280\sqrt{5} + 741305345279328}{41615795893};\]
we check that the other two values of $s \in K$, namely
$\frac{\sqrt{5} - 15}{7}$ and $\frac{-22\sqrt{5} - 30}{19}$, do not
satisfy $s^2 - 20$ is a square, and hence any elliptic curve over
$\Q(\sqrt{5})$ with this $j$ has $H_{E,5} \cong D_4$. Equivalently,
the pair $(5,j)$ is exceptional for $\Q(\sqrt{5})$.

However, if we input $s=\frac{3\sqrt{5} - 80}{41}$, we get \[j = \frac{277374956280053760\sqrt{5} + 622630488102469632}{18658757027251},\] and whilst $\frac{3\sqrt{5} - 80}{41}$ does satisfy $s^2 - 20$ not being a square, this is not the case for $s=3\sqrt{5} + 2$, which yields the same $j$-value. One therefore has to be careful of these ``pretenders'', hence the last paragraph of the above proof. 

We can even insert rational values of $s$, such as $s=1$, to obtain elliptic curves over $\Q$ whose basechange to $\Q(\sqrt{5})$ are Hasse at 5, e.g., \[ j = \frac{-56623104}{161051}.\]
\end{example}

\section{Proof of Theorem \ref{Thm2}: the model}\label{sec:model}
Let $G$ be a subgroup of $\GL_2(\Z/N\Z)$ for some $N$, and consider the modular curve $X_G(N)$ over $\Q$; let us assume $\det G = (\Z/N\Z)^\ast$, so that this curve is geometrically connected. As a curve over $\C$, the curve depends only on the intersection of $G$ with $\SL_2(\Z/N\Z)$. Therefore, if $N = 13$, and $G$ is the pullback to $\GL_2(\F_{13})$ of $S_4 \subset \PGL_2(\F_{13})$, then the modular curve $X_{S_4}(13) := X_G(13)$, when considered over $\C$, depends only on $G \cap \SL_2(\F_{13})$, which modulo scalar matrices becomes $A_4 \subset \PSL_2(\F_{13})$, and has the description $\Gamma_{A_4}(13) \backslash \mathcal{H}^\ast$, where $\Gamma_{A_4}(13)$ is the pullback of $A_4 \subset \PSL_2(\F_{13})$ to $\PSL_2(\Z)$, and $\mathcal{H}^\ast$ is the extended upper half plane.

In Chapter 3 of his thesis \cite{Gal}, Steven Galbraith describes a method to compute the canonical model of any modular curve $X(\Gamma)$, provided one can compute explicitly and to some precision the $q$-expansions of a basis of $S_2(\Gamma)$, the weight-2 cuspforms of level $\Gamma$ (a congruence subgroup). Hence, to compute the desired equation, we are reduced to computing explicitly a basis of the finite-dimensional $\C$-vector space $S_2(\Gamma_{A_4}(13))$. A standard application of the Riemann-Hurwitz genus formula gives that the genus of the desired curve is 3; this is also the dimension of $S_2(\Gamma_{A_4}(13))$. We will proceed with the exposition in a series of steps. 

\subsection{Step 1. Identifying our desired space as the set of
  invariant vectors of a representation}

Since $\Gamma(13) \subset \Gamma_{A_4}(13)$, we obtain \[ S_2(\Gamma_{A_4}(13)) \subset S_2(\Gamma(13)),\] a 3-dimensional subspace of a 50 dimensional space. Upon this latter 50 dimensional space there is a right action - the ``weight 2 slash operator'' - of $\PSL_2(\Z)$ (since $\Gamma(13)$ is normal in $\PSL_2(\Z)$) which, by definition of $S_2(\Gamma(13))$, factors through the quotient $\PSL_2(\F_{13})$, which we recall contains a unique (up to conjugacy) subgroup isomorphic to $A_4$. Our desired 3-dimensional space is then the subspace of $S_2(\Gamma(13))$ fixed by $A_4$: \[ S_2(\Gamma_{A_4}(13)) = S_2(\Gamma(13))^{A_4};\] that is, the $A_4$-invariant subspace of the $\PSL_2(\F_{13})$-representation $S_2(\Gamma(13))$.

When we carry out the computation, we will work with an explicit
subgroup of~$\PSL_2(\F_{13})$ isomorphic to~$A_4$, namely that
generated by the two matrices \[ A = \left(\begin{array}{cc}-5 & 0
  \\ 0 & 5\end{array}\right) \mbox{ and } B =
  \left(\begin{array}{cc}-2 & -2 \\ -3 & 3\end{array}\right).\] A
    different choice of $A_4$ will yield an isomorphic space of
    cuspforms, which for our application (in computing an equation for
    $X_{S_4}(13)$) makes no difference. However, the present choice of
    $A_4$ is favourable for computational reasons, since it is
    normalised by the matrix $\left(\begin{array}{cc}-1 & 0 \\ 0 &
      1\end{array}\right)$; the congruence subgroup is then said to be
      of \textbf{real type} (see~\cite[2.1.3]{Cre}).

\subsection{Step 2. The conjugate representation}

Given a congruence subgroup $\Gamma$ of level~$13$, denote by
$\widetilde{\Gamma}$ the conjugate subgroup of level~$13^2$:
\[ \widetilde\Gamma :=
\left( \begin{array}{cc} 13 & 0 \\ 0 & 1 \end{array}
\right)^{-1}\Gamma\left( \begin{array}{cc} 13 & 0 \\ 0 & 1 \end{array}
\right) \supseteq \Gamma_0(13^2)\cap\Gamma_1(13).\]  In general,
$\widetilde{\Gamma}$ has level~$13^2$; in particular we
have
\[
\widetilde{\Gamma(13)} = \Gamma_0(13^2)\cap\Gamma_1(13).
\]
Then we have the important isomorphism
\begin{align*}
S_2(\Gamma) &\longrightarrow S_2(\widetilde{\Gamma}) \\ f(z) &\longmapsto f(13z),
\end{align*}
which on $q$-expansions takes $q := e^{2\pi iz}$ to $q^{13}$. The point is that we may work with $S_2(\widetilde{\Gamma})$ instead of $S_2(\Gamma)$ if we like, as we can easily pass between the two; the two spaces are only superficially different. 

This is exactly our plan for $\Gamma(13) \subset \Gamma_{A_4}(13)$. We
have $S_2(\widetilde{\Gamma_{A_4}(13)}) \subset
S_2(\widetilde{\Gamma(13)})$. This latter space is also a
representation of $\PSL_2(\F_{13})$; for $g \in \PSL_2(\F_{13})$, we
let $\gamma$ be a pullback to $\PSL_2(\Z)$ of $g$, and define, for $F \in
S_2(\widetilde{\Gamma(13)})$, \[ g\cdot F := F |_2 \tilde{\gamma} := F |_2
\left( \begin{array}{cc} 13 & 0 \\ 0 & 1 \end{array}
\right)^{-1}\gamma\left( \begin{array}{cc} 13 & 0 \\ 0 & 1 \end{array}
\right).\] We then obtain \[ S_2(\widetilde{\Gamma_{A_4}(13)}) =
S_2(\widetilde{\Gamma(13)})^{A_4}.\] Working inside the conjugated
space $S_2(\widetilde{\Gamma(13)})$ is better, since its alternative
description as $S_2(\Gamma_0(169) \cap \Gamma_1(13))$ is more amenable
to the explicit computations we wish to carry out using the computer
algebra systems~\Sage\  and \Magma.

\subsection{Step 3. Identifying the 3 relevant sub-representations}

Inside the space $S_2(\Gamma_0(169) \cap \Gamma_1(13))$ we have the
space $S_2(\Gamma_0^+(169))$, the subspace of $w_{169}$-invariants of
$S_2(\Gamma_0(169))$. We can compute this space explicitly in
\Sage. Let $q := e^{2 \pi i z}$, $\zeta_7 := e^{2 \pi i/7}$, $\zeta_7^+
:= \zeta_7 + \zeta_7^{-1}$, and $\sigma$ a nontrivial Galois
automorphism of the field $\Q(\zeta_7^+) = \Q(\zeta_7)^+$. Then an
explicit \Sage\ computation yields \[ S_2(\Gamma_0^+(169)) = \langle
g,g^\sigma,g^{\sigma^2}\rangle,\] where \[ g(z) = q - (\zeta_7^++1)q^2
+ (1-\zeta_7^+{}^2)q^3 + (\zeta_7^+{}^2 + 2\zeta_7^+ - 1)q^4 +
\cdots.\] These three forms are Galois conjugate newforms. We will
denote by $a_n$ the Fourier coefficients of $g$.

For each $r \in \F_{13}^\ast$, define the \textbf{isotypical component
  $g_r$ of $g$} as \[ g_r := \sum_{j \equiv r \mbox{ mod }13}a_jq^j,\]
and consider the $\C$-span $V_0$ of these components. Similarly define
$V_1$ and $V_2$ by replacing $g$ with $g^{\sigma}$ and $g^{\sigma^2}$
respectively. We will show in the coming sections that each $V_i$ is a
$12$-dimensional sub-representation of $S_2(\widetilde{\Gamma(13)})$
which is irreducible as $\Q[\PSL_2(\F_{13})]$-module. We may focus on
these three sub-representations, because, as we compute later, each
one contains a unique (up to scaling) $A_4$-invariant cuspform.

Since we already know that we are looking for three forms, we need not
concern ourselves with the other irreducible components
of~$S_2(\widetilde{\Gamma(13)})$.  In fact, the sum $V_0\oplus
V_1\oplus V_2$, of dimension~$36$, is the subspace of
$S_2(\Gamma_0(169) \cap \Gamma_1(13))$ spanned by the Galois
conjugates of the newform~$g$ together with their twists by characters
of conductor~$13$.  The complementary subspace of dimension~$14$ is
spanned by oldforms from level~$13$ and their twists.  Each of these
two subspaces is the base-change of a vector space over~$\Q$ which is
irreducible as $\Q[\PSL_2(\F_{13})]$-module, while the $36$-dimensional
piece splits as $\Q(\zeta_6^+)[\PSL_2(\F_{13})]$-module into three
irreducible $12$-dimensional subspaces.

Although we discovered these facts computationally, there is an
alternative representation-theoretic explanation of these spaces in
Baran's paper \cite{Baran1}, where she shows in Propositions 3.6
and 5.2 (loc.~cit.) that the spaces $V_i$ are irreducible cuspidal
representations of $\PSL_2(\F_{13})$.

\subsection{Step 4. Computing the action of $\PSL_2(\F_{13})$ on each sub-representation}
$\PSL_2(\F_{13})$ is generated by the two matrices $S$ and $T$,
where
\[ S = \left(\begin{array}{cc}0 & -1 \\ 1 & 0\end{array}\right)
  \mbox{ and } T = \left(\begin{array}{cc}1 & 1 \\ 0 &
    1\end{array}\right).\]
However, since we have conjugated the congruence subgroup, the action
we need to consider must also be conjugated by the matrix
$\left( \begin{array}{cc} 13 & 0 \\ 0 & 1 \end{array} \right)$.
Hence, $\PSL_2(\F_{13})$ acts on~$S_2(\widetilde{\Gamma(13)})$ via the
matrices $\tilde{S}$ and $\tilde{T}$:
\[ \tilde{S} =
\frac{1}{13}\left(\begin{array}{cc}0 & -1 \\ 169 & 0\end{array}\right)
  \mbox{ and } \tilde{T} = \left(\begin{array}{cc}1 & 1/13 \\ 0 &
    1\end{array}\right).\]
Observe that the action of $\tilde{S}$ is, up to a scaling that we may
ignore, the same as the Fricke involution $w_{169}$.

Thus, to describe the action of $\PSL_2(\F_{13})$ on each $V_i$, we will express the action of $\tilde{S}$ and $\tilde{T}$ on each $V_i$, explicitly as $12 \times 12$ matrices. 

\subsection{Step 5. Computing the action of $\tilde{S}$ and $\tilde{T}$}
We fix $i=0$; the other two cases are completely analogous and can be
obtained by Galois conjugation (see Lemma~\ref{lem:commutes} below).

To compute the action of $\tilde{T}$ on $V_0$, we use the definition
directly:
\[ \left(g|_2\left(\begin{array}{cc}1 & 1/13 \\ 0 & 1\end{array}\right)\right)(z) = g(z + \frac{1}{13}).\]
Recall that $a_i$ is the $i$th coefficient of $g$. We then get \[g(z + \frac{1}{13}) = \zeta_{13}q - (s+1)\zeta_{13}^2q^2 + \cdots\] which we can rearrange as \[\zeta_{13}(a_1q + a_{14}q^{14} + a_{27}q^{27} + \cdots) + \zeta_{13}^2(a_2q^2 + a_{15}q^{15} + \cdots) + \cdots.\]

Thus, in the isotypical basis for~$V_0$, the action of $\tilde{T}$ is
given simply by the $12 \times 12$ diagonal matrix \[ \left(\begin{array}{cccc}
  \zeta & & & \\ & \zeta^2 & & \\ & & \ddots & \\ & & & \zeta^{12}
\end{array}\right)\]
where we write $\zeta$ for $\zeta_{13}$. In particular, this shows
that $V_0$ is indeed invariant under the action of $\tilde{T}$.

Computing $\tilde{S}$ directly on the isotypical basis is not so easy,
so what we do is change to a basis upon which we can compute
it. Instead of the isotypical basis, we take the \textbf{twist
  basis} \[ \langle g \otimes \chi^j : 0 \leq j \leq 11\rangle,\]
where $\chi : 2 \mapsto \zeta_{12}$ is a fixed generator of the group
of Dirichlet characters of conductor~$13$, and $g \otimes \chi$
denotes the usual twist of~$g$ by~$\chi$. Note that this twist basis
consists entirely of newforms (see \cite{Atkin-Li}). Since twisting
by~$\chi$ preserves~$V_0$ and the change of basis matrix is
$(\chi^j(i))$ (for $0 \leq j \leq 11$ and $1 \leq i \leq 12$), which has
nonzero determinant, we have shown the following.

\begin{lemma}
Both the isotypical and twist bases are $\C$-bases for the $12$-dimensional
subspace~$V_0$ of~$S_2(\widetilde{\Gamma(13))})$:
\[\langle g \otimes \chi^j : 0 \leq j \leq 11\rangle = \langle g_j : 1 \leq j \leq 12\rangle.\]
\end{lemma}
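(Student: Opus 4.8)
The plan is to produce an explicit invertible change-of-basis matrix between the two spanning sets and to recognise it as a discrete Fourier transform matrix. First I would record the isotypical decomposition $g = \sum_{r=1}^{12} g_r$. This holds because $g$ is a newform of level $13^2$ with trivial nebentypus, so by Atkin--Lehner--Li theory $a_n = 0$ whenever $13 \mid n$; in particular there is no ``$g_0$'' component, and the same observation shows the naive twist $\sum_n \chi^j(n)a_n q^n$ coincides with the newform $g\otimes\chi^j$. By definition $V_0 = \langle g_1,\dots,g_{12}\rangle$, and we are granted from the later sections (cf.\ also Baran \cite{Baran1}) that $\dim V_0 = 12$; since the $g_r$ have $q$-expansions supported on the disjoint progressions $n \equiv r \pmod{13}$, this forces each $g_r \neq 0$ and makes $(g_r)_{r=1}^{12}$ a basis of $V_0$.

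Next I would observe that twisting by $\chi$ acts diagonally on the isotypical basis: twisting multiplies the coefficient of $q^n$ by $\chi(n)$, and $\chi$ is constant equal to $\chi(r)$ on the support of $g_r$, so $g_r \otimes \chi = \chi(r)\,g_r$. Hence $V_0$ is stable under twisting by $\chi$, and therefore by every power $\chi^j$, so each $g\otimes\chi^j$ lies in $V_0$; expanding $g = \sum_r g_r$ then gives
\[ g \otimes \chi^j \;=\; \sum_{r=1}^{12} \chi^j(r)\, g_r, \qquad 0 \le j \le 11, \]
so the transition matrix from $(g_r)$ to $(g\otimes\chi^j)$ is $M = (\chi^j(r))_{0\le j\le 11,\,1\le r\le 12}$. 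Since $2$ is a primitive root modulo $13$ and $\chi(2) = \zeta_{12}$, writing $r = 2^{k}$ gives $\chi^j(r) = \zeta_{12}^{jk}$, so after reindexing the columns by $k$ the matrix $M$ becomes the Vandermonde matrix on the twelve distinct points $1,\zeta_{12},\dots,\zeta_{12}^{11}$ (equivalently, the order-$12$ discrete Fourier transform matrix), whence $\det M = \pm\prod_{0\le a<b\le 11}(\zeta_{12}^{b} - \zeta_{12}^{a}) \neq 0$.

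Because $M$ is invertible, $\langle g\otimes\chi^j : 0\le j\le 11\rangle = \langle g_r : 1\le r\le 12\rangle = V_0$, and each of the two spanning families, both of size $12$, is a basis of the $12$-dimensional space $V_0$. I do not expect a genuine obstacle here: the argument is just the linear algebra of the finite Fourier transform, and its only non-formal input is $\dim V_0 = 12$ (equivalently, the nonvanishing of every isotypical component), which is supplied independently. If any step deserves attention it is simply verifying that the transition matrix really is the DFT/Vandermonde matrix and hence nonsingular; the remainder is bookkeeping with $q$-expansions.
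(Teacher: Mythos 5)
Your proposal is correct and follows essentially the same route as the paper, whose proof is the one-line observation that twisting by $\chi$ preserves $V_0$ and that the change-of-basis matrix $(\chi^j(i))$ has nonzero determinant. You simply supply the details the paper leaves implicit: that $a_n=0$ for $13\mid n$ (so $g=\sum_{r=1}^{12}g_r$ and the naive twist agrees with the newform $g\otimes\chi^j$), and that the transition matrix is a Vandermonde/DFT matrix via the primitive root $2$, hence nonsingular.
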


Recall that the action of $\tilde{S}$ is the same as the Fricke
involution $w_{169}$. It is known (see \cite{Atkin-Li}) that $w_N$ acts
on newforms $F$ of level $N$ as \[ F |_2 w_N =
\lambda_N(F)\cdot\bar{F},\] where $\bar{F}$ is the newform obtained
from the Fourier expansion of $F$ by complex conjugation, and
   $\lambda_N(F)$ is the {Atkin-Lehner pseudoeigenvalue}, an
algebraic number of absolute value 1 (Theorem 1.1 of
\cite{Atkin-Li}). In our twist basis, we have \[ \overline{g \otimes
  \chi^j} = g \otimes \chi^{12 - j},\] so we only need to compute the
pseudoeigenvalues associated to $g \otimes \chi^j$ for $0 \leq j \leq
6$;  the others may be obtained from these by complex
conjugation. Also, the pseudoeigenvalues for $j=0$ and $j=6$ are
actually eigenvalues, and may be computed directly (for example in
\Sage); we find that the eigenvalue for $j=0$ is~$+1$, and for $j=6$
is~$-1$. 

\subsection{Step 6. Computing the Atkin-Lehner pseudoeigenvalues}

In order to stay consistent with the notation of \cite{Atkin-Li}, we relabel $g$ to $F$, and we let $q=13$. By $a(q)$ we mean the $q$th Fourier coefficient of $F$, which we may check is 0. We may also check that $F$ is not a twist of an oldform of $S_2(\widetilde{\Gamma(13)})$; thus, in the language of \cite{Atkin-Li}, $F$ is \textbf{13-primitive}. We let $\chi_0$ be the trivial character modulo~$13$, so $\chi_0 = \chi^0$, and we write $\lambda(\chi)$ for the Atkin-Lehner pseudoeigenvalue of $F \otimes \chi$, for $\chi$ any character. We let $g(\chi)$ be the Gauss sum of the character $\chi$, with the convention that $g(\chi_0) = -1$. 

The main tool to compute $\lambda(\chi^j)$, for $0 \leq j \leq 11$, is Theorem 4.5 in \cite{Atkin-Li}, which in the present context is as follows:

\begin{theorem}[Special case of Theorem 4.5 of \cite{Atkin-Li}]
With the above notation and assumptions, we have, for $0 \leq j \leq 11$,
\[ (-1)^j12g(\chi^{12-j})\lambda(\chi^j) = \sum_{k=0}^{11}g(\chi^k)g(\chi^{j+k})\overline{\lambda(\chi^k)}.\]
\end{theorem}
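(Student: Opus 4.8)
The plan is to obtain this identity as a direct specialisation of \cite[Theorem 4.5]{Atkin-Li}, so that the ``proof'' amounts to checking the hypotheses and carefully matching normalisations. First I would recall the shape of the general theorem: for a newform $F$ of level $N$ with $q \mid N$ which is $q$-primitive, Atkin and Li express the pseudoeigenvalue $\lambda(F \otimes \chi)$ of the Atkin--Lehner operator $W_N$ on a twist of $F$ by a Dirichlet character $\chi$ of $q$-power conductor as a weighted sum, over a complete set of representatives $\psi$ for the characters of $q$-power conductor, of terms of the form $g(\psi)\, g(\chi\psi)\, \overline{\lambda(F \otimes \psi)}$, the weight being a Gauss sum times an elementary sign. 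In our situation $F = g$ has level $N = 13^2$, is $13$-primitive and satisfies $a(13) = 0$ (all verified above); since $13$ is prime, the characters of $13$-power conductor that intervene are exactly the trivial character together with the primitive characters modulo~$13$, i.e. the twelve powers $\chi^0, \chi^1, \dots, \chi^{11}$ of the fixed generator~$\chi$, so the abstract sum over $\psi$ becomes $\sum_{k=0}^{11}$.

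Next I would pin down the elementary constants. The factor $12$ on the left is $\phi(13)$, the number of Dirichlet characters modulo~$13$, which is the normalising constant in Atkin--Li's identity when the common conductor is the prime~$13$. The sign $(-1)^j$ is $\chi^j(-1)$: since $-1$ is the unique element of order~$2$ in the cyclic group $(\Z/13\Z)^\ast$ of order~$12$, we have $\chi(-1) = \zeta_{12}^{6} = -1$, whence $\chi^j(-1) = (-1)^j$. The remaining Gauss-sum manipulation on the right reduces to the standard identities $g(\psi)\, g(\overline\psi) = \psi(-1)\, 13$ for $\psi$ primitive modulo~$13$, together with the convention $g(\chi_0) = -1$ for the trivial character --- which is precisely how Atkin--Li incorporate the $\psi = \chi_0$ summand --- after which the general formula collapses to exactly the displayed one, with $\lambda(\chi^j) := \lambda(F \otimes \chi^j)$.

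The main difficulty is not conceptual but is the notorious fragility of such formulas under changes of normalisation: Atkin--Li's conventions for $W_N$, for the pseudoeigenvalue $\lambda$ and for the Gauss sum $g(\chi)$ must all be tracked consistently, and one must take care to land in the correct branch of their Theorem~4.5, namely the case $q^2 \mid N$ with $a(q) = 0$ rather than $q \,\|\, N$. As a safeguard I would check the specialised formula against the pseudoeigenvalues already computed directly, $\lambda(\chi^0) = +1$ and $\lambda(\chi^6) = -1$ (taking $j = 0$ in the formula must be compatible with these), and would also test it numerically in \Sage\ from the $q$-expansions of the twists $g \otimes \chi^k$ to sufficient precision; only once these checks pass would I regard the transcription as correct.
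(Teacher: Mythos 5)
Your proposal is correct and matches the paper's treatment: the paper likewise offers no independent proof, but simply verifies the hypotheses of Atkin--Li's Theorem 4.5 (that $F=g$ has $a(13)=0$ and is $13$-primitive, with the convention $g(\chi_0)=-1$) and quotes the specialised identity. Your additional care in matching the normalisations and cross-checking against $\lambda(\chi^0)=+1$, $\lambda(\chi^6)=-1$ is a sensible safeguard but does not constitute a different route.
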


This theorem gives us, for each $0 \leq j \leq 11$, a linear relation
among the $\lambda(\chi^k)$. Although there are twelve
$\lambda(\chi^k)$, we have in the previous paragraph computed two of
them, leaving us with 10. But actually, we have $\lambda(\chi^j) =
\overline{\lambda(\chi^{12-j})}$ for $0 \leq j \leq 5$, so we really
only have 5 independent unknowns. However, our strategy is, at first,
to consider that we indeed have 10 unknowns (namely, $\lambda(\chi^j)$
for $1 \leq j \leq 5$ and $7 \leq j \leq 11$) and use the theorem to
derive as many linear relations between these 10 unknowns as we can.

Doing this yields 6 independent equations, whose coefficients lie in
$\Q(\zeta_{156})$ (the field over which the Gauss sums are
defined). One is, however, able to obtain two more independent
equations, by applying Theorem 4.5 of Atkin and Li starting not with
$F=g$ (as we did previously), but rather with $F = g \otimes
\chi^6$. Thus we get

\begin{theorem}[Another special case of Theorem 4.5 of \cite{Atkin-Li}]
For $0 \leq j \leq 11$, we get
\[ (-1)^{j+1}12g(\chi^{12-j})\lambda(\chi^{6+j}) = \sum_{k=0}^{11}g(\chi^k)g(\chi^{j+k})\overline{\lambda(\chi^{6+k})}.\]
\end{theorem}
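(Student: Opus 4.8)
\section*{Proof proposal}

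The plan is to deduce this identity from a \emph{second} application of Theorem~4.5 of \cite{Atkin-Li}, now with the $13$-primitive newform $g$ replaced by its twist $g\otimes\chi^6$. The key observation is that the left-hand side of Theorem~4.5 carries a factor equal to the Fricke eigenvalue of the input newform. Made explicit in our setting (level $169$, trivial nebentypus, $\chi$ of order $12$, and a $13$-primitive newform $f$ with $a_{13}(f)=0$ and totally real coefficients, so that $w_{169}$ acts on $f$ as a genuine eigenvalue $\lambda_{169}(f)\in\{\pm1\}$), Theorem~4.5 reads
\[ \lambda_{169}(f)\,(-1)^{j}\,12\,g(\chi^{12-j})\,\lambda_{169}(f\otimes\chi^{j}) \;=\; \sum_{k=0}^{11} g(\chi^{k})\,g(\chi^{j+k})\,\overline{\lambda_{169}(f\otimes\chi^{k})},\qquad 0\le j\le 11; \]
the first of the two special cases displayed above is precisely this identity for $f=g$, where $\lambda_{169}(g)=\lambda(\chi_0)=+1$.

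So the plan is to run the same computation with $f=g\otimes\chi^6$. First I would check that $g\otimes\chi^6$ satisfies the required hypotheses: its nebentypus is $(\chi^6)^2=\chi^{12}=\chi_0$, hence trivial; its $13$th Fourier coefficient is $\chi^6(13)\,a_{13}(g)=0$; its coefficients are totally real, being those of $g$ multiplied by the rational values of the quadratic character $\chi^6$; and it is again $13$-primitive, since if $g\otimes\chi^6$ were a twist of an oldform $h$ of $S_2(\widetilde{\Gamma(13)})$ then so would be $g=(g\otimes\chi^6)\otimes\chi^{6}$, contrary to what was checked for $g$. Then I would read off the conclusion of Theorem~4.5 for this input: the twists $\{(g\otimes\chi^6)\otimes\chi^j:0\le j\le 11\}=\{g\otimes\chi^{6+j}:0\le j\le 11\}$ are, under the relabelling $j\mapsto 6+j$ in $\Z/12\Z$, exactly the newforms $g\otimes\chi^j$, so $\lambda_{169}((g\otimes\chi^6)\otimes\chi^j)=\lambda(\chi^{6+j})$; the Gauss sums are unchanged, since they depend only on the characters and not on the form; and $\lambda_{169}(g\otimes\chi^6)=\lambda(\chi^6)=-1$, a value already computed in Step~5. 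Substituting all of this into the displayed general identity produces exactly the asserted formula, the factor $\lambda_{169}(g\otimes\chi^6)=-1$ being precisely what turns the $(-1)^j$ on the left into $(-1)^{j+1}$.

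The argument is therefore a routine re-run of the earlier computation, and the only point demanding care is the handling of this sign: one must be sure that the Fricke eigenvalue of $g\otimes\chi^6$ enters with the normalisation of \cite{Atkin-Li} used above (equivalently, that the displayed general form of Theorem~4.5 is the correct one), and that the reindexing of the characters $\chi^{12-j}$ and $\chi^{j+k}$ is carried out correctly modulo $12$, with the convention $g(\chi_0)=-1$ applied whenever an exponent becomes $\equiv 0$. I expect this sign-chasing to be the main --- indeed essentially the only --- obstacle; everything else is bookkeeping identical to that in the first special case.
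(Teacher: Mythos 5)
Your proposal is exactly the paper's argument: the paper derives this identity in one line by applying Theorem~4.5 of \cite{Atkin-Li} with $F = g\otimes\chi^6$ in place of $F=g$. Your verification of the hypotheses ($13$-primitivity, trivial nebentypus, $a_{13}=0$) and your sign bookkeeping --- the eigenvalue $\lambda(\chi^6)=-1$ of the input form converting $(-1)^j$ into $(-1)^{j+1}$, with the Gauss sums unchanged and the twists reindexed by $j\mapsto 6+j$ --- correctly supply the details the paper leaves implicit.
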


As previously stated, this yields two more independent equations,
giving us a linear system of 8 independent equations in 10 unknowns.

Let $x = \lambda(\chi)$ and $y = \lambda(\chi^2)$. We obtain the
following two linear equations in the unknowns
$x,\overline{x},y,\overline{y}$: 
\begin{align}
\label{one}c_1\bar{y} + c_2 y + c_3 x + c_4\bar{x} &= c_5 \\ \label{two} c_6y + c_7x + c_8\bar{x} &= c_9;
\end{align}
here the $c_i$ are explicit elements of $\Q(\zeta_{156})$. We now use
the relations $x\bar{x} = y\bar{y} = 1$. We use \eqref{two} to
eliminate $y$ and $\bar{y}$ from \eqref{one} to obtain a linear
relation between $x$ and $\bar{x}$; now using $x\bar{x}=1$, we obtain
a quadratic in $x$. This quadratic has no root in $\Q(\zeta_{156})$;
we need to adjoin $\sqrt{-7}$, so in fact we work in the field
$\Q(\zeta_{1092})$; this might seem excessive, but the coefficients of
$g$ are anyway in $\Q(\zeta_7)^+$. This quadratic in $x$ tells us that
$x$ is one of two values, and $x$ determines all other
$\lambda(\chi^j)$.

In order to determine which of the two values $x$ really is, we computed two competing $\tilde{S}$ matrices, and took the one which satisfied the correct relations with $\tilde{T}$ to be the generators of $\PSL_2(\F_{13})$, namely, \[ \tilde{S}^2 = \tilde{T}^{13} = (\tilde{S}\tilde{T})^3 = 1.\]

\subsection{Step 7. The cuspforms}
We now have matrices giving the action of $\tilde{S}$ on the twist
basis, and the action of $\tilde{T}$ on the isotypical basis; a change
of basis matrix applied to either of these gives the action of both
matrices in terms of the same basis. Write $\rho(S)$ and $\rho(T)$ for
the $12 \times 12$ matrices giving the action of $\tilde{S}$ and
$\tilde{T}$ respectively with respect to the twist basis.

We now compute the $A_4$-invariant subspace of $V_0$. Recall that our
generators of $A_4 \subset \PSL_2(\F_{13})$ are: 
\[ A = \left(\begin{array}{cc}-5 & 0 \\ 0 & 5\end{array}\right) \mbox{
    and } B = \left(\begin{array}{cc}-2 & -2 \\ -3 &
    3\end{array}\right).\]
Writing each generator as a word in $S$ and $T$:
\begin{align*}
A &= T^5ST^{-2}ST^2ST^3ST^{-5},\\ B &= T^4ST^3ST^{-3}S.
\end{align*}
the action of $A_4$ on $S_2(\widetilde{\Gamma(13)})$ is given by the
same words in the matrices $\tilde{S},\tilde{T}$:
\begin{align*}
\tilde{A} &= \tilde{T}^5\tilde{S}\tilde{T}^{-2}\tilde{S}\tilde{T}^2\tilde{S}\tilde{T}^3\tilde{S}\tilde{T}^{-5},\\ \tilde{B} &= \tilde{T}^4\tilde{S}\tilde{T}^3\tilde{S}\tilde{T}^{-3}\tilde{S}.
\end{align*}
The action of $\tilde{A}$ and $\tilde{B}$ on our vector space
$V_0$ is given by taking the same words as above, but in $\rho(S)$ and
$\rho(T)$; we call the resulting matrices $\rho(A)$ and $\rho(B)$.

The intersection of the kernels of $\rho(A)-I$ and
$\rho(B)-I$ is one-dimensional, spanned by a vector of the
coefficients, in the twist basis, of an $A_4$-invariant cuspform in
$V_0$. These coefficients lie in the degree~$9$ field
$\Q(\zeta_7^+,\zeta_{13}^{++})$, where by $\Q(\zeta_{13}^{++})$ we
denote the unique cubic subfield of $\Q(\zeta_{13})$.  We call this
$A_4$-invariant form $f$.

We do not have to repeat the calculation for $V_1$ and $V_2$,
because of the following fact. Here we regard $V_i$ as
$\bar{\Q}[\PSL_2(\F_{13})]$-modules.

\begin{lemma}\label{lem:commutes}
Let $\gamma$ be an element of $\PSL_2(\F_{13})$. The following diagram commutes:
\begin{center}
{
$\minCDarrowwidth40pt
\begin{CD}
V_0 @>\sigma>> V_1\\
@V\gamma VV  @VV\gamma V\\
V_0 @>\sigma>> V_1
\end{CD}$
}
\end{center}
\end{lemma}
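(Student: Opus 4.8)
We sketch the proof. The plan is to make the vertical arrow $\sigma$ completely explicit and then reduce the commutativity to the two generators $S$ and $T$ of $\PSL_2(\F_{13})$. Recall that $g$ has Fourier coefficients in $\Q(\zeta_7^+)$, that every element of $V_0$ has coefficients in the compositum of this field with $\Q(\zeta_{156})$ (the twist basis elements $g\otimes\chi^j$ have coefficients in $\Q(\zeta_7^+,\zeta_{12})$), and that the $12\times 12$ matrices $\rho(S),\rho(T)$ describing the action on $V_0$ in the twist basis have entries in $\Q(\zeta_{156},\sqrt{-7})$ --- this is where the Atkin--Lehner pseudoeigenvalues of Step~6 live. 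Since $\Q(\zeta_7^+)$ is linearly disjoint over $\Q$ from $\Q(\zeta_{156},\sqrt{-7})$ --- a short check with Galois groups inside $\Q(\zeta_{1092})$, using that the cubic subfield $\Q(\sqrt{-7})$ of $\Q(\zeta_7)$ is cut out by a subgroup of $(\Z/7)^\ast$ not contained in $\{\pm1\}$ --- we may fix $\tilde\sigma\in\Gal(\bar{\Q}/\Q)$ restricting to $\sigma$ on $\Q(\zeta_7^+)$ and to the identity on $\Q(\zeta_{156},\sqrt{-7})$. Conjugating $q$-expansions by $\tilde\sigma$ sends $g\mapsto g^\sigma$, hence $g\otimes\chi^j\mapsto g^\sigma\otimes\chi^j$ and $g_r\mapsto (g^\sigma)_r$ (as $\tilde\sigma$ fixes $\zeta_{12}$ and $\zeta_{13}$), so $\tilde\sigma(V_0)=V_1$ and the map in the statement is $\sigma:=\tilde\sigma|_{V_0}$. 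Because $\tilde\sigma$ is multiplicative on matrices, it suffices to verify the diagram for $\gamma=S$ and $\gamma=T$, i.e.\ that $\rho_1(\gamma)=\tilde\sigma(\rho_0(\gamma))$ for these two, where $\rho_0,\rho_1$ denote the matrices of the action on $V_0,V_1$ in the respective twist bases.

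For $\gamma=T$ this is immediate from the computation of the $\tilde T$-action above: $\tilde T$ multiplies the $q^n$-coefficient by $\zeta_{13}^n$, so on the isotypical basis of $V_i$ it is $\mathrm{diag}(\zeta_{13},\dots,\zeta_{13}^{12})$ for both $i=0,1$, and the isotypical-to-twist change of basis is the same matrix $(\chi^j(r))$ in both cases; hence $\rho_0(T)=\rho_1(T)$, with entries in $\Q(\zeta_{156})$, which $\tilde\sigma$ fixes. The case $\gamma=S$ is the heart of the matter. From the description of $\tilde S=w_{169}$ above, $\tilde S$ acts on the twist basis by $h\otimes\chi^j\mapsto\lambda(h\otimes\chi^j)\,(h\otimes\chi^{12-j})$ for $h=g$ or $h=g^\sigma$, so $\rho_0(S)$ and $\rho_1(S)$ are the ``anti-diagonal'' matrices carrying the pseudoeigenvalues $\lambda(g\otimes\chi^j)$, resp.\ $\lambda(g^\sigma\otimes\chi^j)$. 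I would prove that these two families of pseudoeigenvalues are literally equal, which gives $\rho_0(S)=\rho_1(S)$; as its entries lie in $\Q(\zeta_{156},\sqrt{-7})$, fixed by $\tilde\sigma$, we then get $\rho_1(S)=\tilde\sigma(\rho_0(S))$, and the lemma follows.

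The remaining point --- that $\lambda(g\otimes\chi^j)=\lambda(g^\sigma\otimes\chi^j)$ for all $j$ --- is the main obstacle, and I would establish it by re-reading the computation of Step~6. Both $g$ and $g^\sigma$ are $13$-primitive newforms of level $169$ and trivial nebentypus with vanishing $13$-th coefficient, so the two special cases of Theorem~4.5 of \cite{Atkin-Li} used there produce the \emph{same} linear system for both: the only coefficients occurring are Gauss sums of powers of $\chi$, which depend on the prime $13$ alone and not on the form. The two ``seed'' eigenvalues that start the recursion, namely those of $w_{169}$ at $j=0$ and $j=6$, are also the same for $g^\sigma$ as for $g$, since $g$ and $g\otimes\chi^6$ both lie in $S_2(\Gamma_0(169))$, on which $w_{169}$ is an operator defined over $\Q$ and hence commutes with $\tilde\sigma$ --- so $g^\sigma$ and $g^\sigma\otimes\chi^6$ have the same Atkin--Lehner eigenvalues $+1$ and $-1$. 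Finally the disambiguation among the two roots of the resulting quadratic is made by requiring $\rho_i(S)^2=\rho_i(T)^{13}=(\rho_i(S)\rho_i(T))^3=I$, a condition identical for $i=0$ and $i=1$ because $\rho_0(T)=\rho_1(T)$; so the same root, hence the same pseudoeigenvalue, is selected in each case, completing the argument. (Alternatively one could try to bypass the explicit route by pinning down the precise cyclotomic field of definition of the Fricke involution $w_{169}$ on the curve $X_{\Gamma_0(169)\cap\Gamma_1(13)}$ and checking that $\tilde\sigma$ fixes it, but verifying that field-of-definition claim looks no easier than the direct computation above.)
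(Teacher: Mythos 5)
Your proposal is correct and follows essentially the same route as the paper, whose proof is just the terse observation that, in the twist bases of $V_0$ and $V_1$, the matrices of $\tilde S$ and $\tilde T$ are literally the same because the coefficients computed for $V_0$ (the $\zeta_{13}$-powers and the Atkin--Lehner pseudoeigenvalues) are $\sigma$-invariant; you have simply supplied the justification the paper leaves implicit, namely that the Atkin--Li linear system, the seed eigenvalues at $j=0,6$, and the disambiguation via the relations $S^2=T^{13}=(ST)^3=1$ are identical for $g$ and $g^\sigma$. (One cosmetic slip: $\Q(\sqrt{-7})$ is the \emph{quadratic} subfield of $\Q(\zeta_7)$, cut out by the order-$3$ subgroup of squares in $(\Z/7)^\ast$.)
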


\begin{proof}
Each $V_i$ admits a twist basis, corresponding to $g^{\sigma^i}$ and its twists under powers of $\chi$. Fixing this twist basis for each $V_i$, we find that the action of $\tilde{S}$ and $\tilde{T}$ is exactly the same; this is because the coefficients in $\tilde{S}$ and $\tilde{T}$ we found for $V_0$ are invariant under the action of $\sigma$. 
\end{proof}


The lemma allows us to conclude that for $i=0,1,2$, the conjugate
$f^{\sigma^i}$ spans the $A_4$-invariant subspace of~$V_i$, and hence
that $\left\{f,f^\sigma,f^{\sigma^2}\right\}$ is a basis of
$S_2(\widetilde{\Gamma_{A_4}(13)})$.   Next we replace this
basis with one defined over a smaller field, namely
$\Q(\zeta_{13}^{++})$.

Write $f$ as \[ f = F + \zeta_7^+G + \zeta_7^{+2}H,\] where $F,G,H$
have coefficients in $\Q(\zeta_{13}^{++})$.  The forms $F,G,H$ form a
basis for the same space, with coefficients in the smaller field:

\begin{lemma}
The following two $\mathbb{C}$-spans are the same: \[ \langle f,f^\sigma,f^{\sigma^2}\rangle = \langle F,G,H\rangle.\]
\end{lemma}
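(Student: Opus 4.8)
The plan is to write down an explicit invertible change-of-basis matrix relating the two triples. The point is that $F$, $G$, $H$ have coefficients in $\Q(\zeta_{13}^{++})$, which is fixed by $\sigma$ (we use the extension of $\sigma$ to the compositum $\Q(\zeta_7^+,\zeta_{13}^{++})$ that is trivial on $\Q(\zeta_{13}^{++})$; this is legitimate since the two cyclic cubic fields $\Q(\zeta_7^+)$ and $\Q(\zeta_{13}^{++})$ are ramified at different primes, so they intersect in $\Q$). Consequently, applying $\sigma^i$ to the coefficients of $f = F + \zeta_7^+ G + \zeta_7^{+2} H$ acts only on the scalars, and
\[ f^{\sigma^i} = F + (\zeta_7^+)^{\sigma^i} G + \bigl((\zeta_7^+)^{\sigma^i}\bigr)^2 H \qquad (i=0,1,2). \]

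First I would set $\alpha_i := (\zeta_7^+)^{\sigma^i}$ for $i = 0,1,2$; these are exactly the three roots of the minimal polynomial of $\zeta_7^+$ over $\Q$, and in particular they are pairwise distinct. The displayed relations then say that $(f, f^\sigma, f^{\sigma^2})^{t} = M \cdot (F,G,H)^{t}$, where $M = (\alpha_i^{\,j})_{0 \le i,j \le 2}$ is a Vandermonde matrix. Its determinant equals $\prod_{0 \le i < j \le 2}(\alpha_j - \alpha_i) \neq 0$, so $M \in \GL_3(\Q(\zeta_7^+)) \subseteq \GL_3(\C)$; multiplying the relation by $M^{-1}$ expresses $F$, $G$, $H$ as $\C$-linear combinations of $f$, $f^\sigma$, $f^{\sigma^2}$. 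Hence the two $\C$-spans coincide.

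There is really no obstacle here; the only thing deserving a sentence of justification is that $\sigma$ genuinely fixes the coefficients of $F$, $G$, $H$ (immediate from their lying in $\Q(\zeta_{13}^{++})$) and that the three conjugates of $\zeta_7^+$ are distinct (so the Vandermonde determinant is nonzero). One could equivalently phrase the invertibility of $M$ as the statement that $\{1, \zeta_7^+, \zeta_7^{+2}\}$ being a $\Q$-basis of $\Q(\zeta_7^+)$ and $\{1,\sigma,\sigma^2\} = \Gal(\Q(\zeta_7^+)/\Q)$ forces the base-change matrix between the "power basis" and the "Galois basis" to be nonsingular, a standard consequence of linear independence of characters.
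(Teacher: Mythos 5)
Your proof is correct and is essentially the paper's own argument: both write $(f,f^\sigma,f^{\sigma^2})^t = M(F,G,H)^t$ with $M = \bigl(\sigma^i(\zeta_7^{+j})\bigr)$ and conclude from the nonvanishing of $\det M$. You merely make explicit what the paper leaves implicit, namely that $M$ is a Vandermonde matrix in the three distinct conjugates of $\zeta_7^+$ and that $\sigma$ fixes the coefficients of $F,G,H$.
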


\begin{proof}
We have
\[\left(\begin{array}{c}f \\ f^\sigma
  \\ f^{\sigma^2}\end{array}\right) = \left(\begin{array}{ccc}1 &
  \zeta_7^+ & \zeta_7^{+2} \\1 & \sigma(\zeta_7^+) &
  \sigma(\zeta_7^{+2}) \\1 & \sigma^2(\zeta_7^{+}) &
  \sigma^2(\zeta_7^{+2})\end{array}\right)\left(\begin{array}{c}F \\ G
  \\ H\end{array}\right)\]
where the matrix has nonzero determinant.
\end{proof}

As a final flourish, we apply the following nonsingular
transformation \[\left(\begin{array}{ccc}1 & 4 & 3 \\-4 & -3 & 1 \\6 &
  -2 & 5\end{array}\right)\] to obtain the following cuspforms (where
  again $\zeta = \zeta_{13}$) which are a basis for $S_2(\widetilde{\Gamma_{A_4}(13)})$:
{\small
\begin{align*}
f =\ &-q + (-\zeta^{11} - \zeta^{10} - \zeta^3 - \zeta^2)q^2 + (\zeta^{11} + \zeta^{10} - \zeta^9 - \zeta^7 - \zeta^6 - \zeta^4 + \zeta^3 + \zeta^2 - 2)q^3 + \cdots \\
g =\ &(-\zeta^{11} - \zeta^{10} - \zeta^9 - \zeta^7 - \zeta^6 - \zeta^4 - \zeta^3 - \zeta^2 - 1)q + (-\zeta^{11} - \zeta^{10} - \zeta^9 - \zeta^7 - \zeta^6 - \zeta^4 - \zeta^3 - \zeta^2 - 2)q^2 + \\ &(-\zeta^{11} - \zeta^{10} - \zeta^3 - \zeta^2 - 1)q^3 + \cdots \\
h =\ &(\zeta^{11} + \zeta^{10} + \zeta^3 + \zeta^2 + 3)q + (-\zeta^{11} - \zeta^{10} - \zeta^9 - \zeta^7 - \zeta^6 - \zeta^4 - \zeta^3 - \zeta^2 - 3)q^2 + q^3 + \cdots.
\end{align*}
} The final transformation was chosen retrospectively, solely for
cosmetic reasons; it moves three of the rational points on the curve
to $[1:0:0], [0:1:0], [0:0:1]$.

Having obtained the $q$-expansions, we may proceed with the canonical
embedding algorithm of Galbraith, to obtain the smooth quartic
equation for the model~$\CC$ given in the introduction.  In practice
this simply amounts to finding a linear relation between
the~$q$-expansions of the $15$ monomials of degree~$4$ in $f,g,h$.
Although these $q$-expansions have coefficients defined over a cubic
field (and there is no basis with rational $q$-expansions), the
relation we find has rational coefficients.

\begin{remark}
In her paper \cite{Baran1}, Burcu Baran uses a different method to
compute the equation of the modular curve $\Xnonsplit(13)$; her method
would also work for the present curve $X_{S_4}(13)$; one would need an
analogue of her Proposition 6.1 for the subgroup at hand, which can be
proved using her formulae in \S 3 of loc.cit.
\end{remark}

\begin{remark}
We also implemented a variation of the approach detailed here, using a
modular symbol space of level $169$, dual to the spaces $V_i$ above.
This second approach saved us from having to find the
pseudoeigenvalues, since the matrices of both $S$ and~$T$ on modular
symbols are easily computed.  This variation is also easy to adapt to
find models for the curves $\Xsplit(13)$ and~$\Xnonsplit(13)$.  Full
details (including the cases $\Xsplit(13)$ and $\Xnonsplit(13)$) may
be found in the annotated \Sage\ code \cite{SageCode} and \Sage\ worksheet
\cite{SageWorksheet}.

\end{remark}

\section{Proof of Theorem \ref{Thm2}: the $j$-map}\label{sec:jmap}
In this section we explicitly determine the $j$-map \[ X_{S_4}(13)
\overset{j}\longrightarrow X(1) \cong \P^1_\Q\] as a rational function
on $X_{S_4}(13)$. This is a function of degree~$91$, which we seek to
express in the form \[ j(X,Y,Z) = \frac{n(X,Y,Z)}{d_0(X,Y,Z)},\] where
$n$ and $d_0$ are polynomials of the same degree over $\Q$.  We first
find a suitable denominator~$d_0(X,Y,Z)$.  The poles of~$j$ are all of
order~$13$ and are at the $7$ cusps of $X_{S_4}(13)$, so we will find
these, as $\overline{\Q}$-rational points on~$X_{S_4}(13)$.  Then we
find a cubic~$d$ in $\Q[X,Y,Z]$ which passes through these $7$ points
(there is no quadratic which does), and set ~$d_0=d^{13}$.  Having
found~$d_0$ we determine the numerator~$n$ using linear algebra on
$q$-expansions.

\begin{remark}
It would also be possible, in principal, to follow \cite{Baran1} by
computing the zeros of~$j$ numerically to sufficient precision to be
able to recognise them as algebraic points, as then we would have the
full divisor of the function~$j$ from which $j$ itself could be
recovered using an explicit Riemann-Roch space computation.  Our
method has the advantage of not requiring any numerical
approximations.
\end{remark}

We first need to find which points on our model~$\CC$
for~$X_{S_4}(13)$ are the $7$ cusps.  It turns out that there are three
which are defined and conjugate over the degree~$3$ subfield
$\Q(\alpha)$ of $\Q(\zeta)$, where $\zeta=\zeta_{13}$ and
$\alpha=\zeta+\zeta^5+\zeta^8+\zeta^{12}$, and the other four are
defined and conjugate over the degree~$4$ subfield $\Q(\beta)$ of
$\Q(\zeta)$, where $\beta=\zeta+\zeta^3+\zeta^9$.

\begin{proposition}\label{prop:cusps}
On the model~$\CC$ for $X_{S_4}(13)$, the $7$ cusps are given by
the three Galois conjugates of
\[
   [-3\alpha^2-7\alpha+1 : 4\alpha^2+11\alpha-3 : 5]
\]
and the four conjugates of
\[
  [3\beta^3+6\beta^2+6\beta-15 : \beta^3+\beta^2-4\beta-4 : 9],
\]
where $\alpha$ and~$\beta$ have minimal polynomials
$x^3+x^2-4x+1$ and $x^4 + x^3 + 2x^2 - 4 x + 3$ respectively.
\end{proposition}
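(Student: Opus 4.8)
The plan is to identify the seven cusps of $X_{S_4}(13)$ first combinatorially, and then to pin them down as explicit points on the model~$\CC$ by evaluating the canonical map $P\mapsto[f(P):g(P):h(P)]$ at them, using the $\PSL_2(\F_{13})$-action matrices $\rho(S),\rho(T)$ constructed in Section~\ref{sec:model}.

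Since $\Gamma(13)\subseteq\Gamma_{A_4}(13)$, the cusps of $X_{S_4}(13)$ are the orbits of $A_4\subset\PSL_2(\F_{13})$ acting on the cusps of $X(13)$, i.e.\ on the coset space $\PSL_2(\F_{13})/\langle T\rangle$, which we identify with the set of nonzero column vectors in $\F_{13}^2$ taken up to sign (a set of $84$ elements). A direct enumeration of these orbits shows there are exactly seven, each of size~$12$; moreover, since $|A_4|=12$ is prime to~$13$ while the image of $T$ has order~$13$, no nontrivial power of $T$ is conjugate into $A_4$, so every cusp has width~$13$. In particular $j$ has a pole of order~$13$ at each cusp, accounting for $\deg j = 91 = 7\cdot 13$. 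The action of $\Gal(\overline{\Q}/\Q)$ on the cusps factors through $\Gal(\Q(\zeta)/\Q)\cong(\Z/13\Z)^\ast$ (with $\zeta=\zeta_{13}$), and reading off the induced permutation of the seven orbits from the standard description of the cusps of $X(13)$ shows that they break up into one Galois orbit of size~$3$, defined over the cubic subfield $\Q(\alpha)$ of $\Q(\zeta)$, and one of size~$4$, defined over the quartic subfield $\Q(\beta)$; here $\alpha=\zeta+\zeta^5+\zeta^8+\zeta^{12}$ and $\beta=\zeta+\zeta^3+\zeta^9$ are the usual Gaussian periods, with minimal polynomials $x^3+x^2-4x+1$ and $x^4+x^3+2x^2-4x+3$.

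To obtain the coordinates, choose for each of the seven orbits a representative cusp $\gamma\cdot\infty$ with $\gamma\in\SL_2(\Z)$, and write the image of $\gamma$ in $\PSL_2(\F_{13})$ as a word in $S$ and~$T$. Applying that same word in the matrices $\rho(S),\rho(T)$ to the coefficient vectors of $f,g,h$ (invoking Lemma~\ref{lem:commutes} to get the action on $V_1$ and $V_2$ from that on $V_0$) yields the $q$-expansions of $f|_2\gamma,\,g|_2\gamma,\,h|_2\gamma$ at~$\infty$, and the triple of their leading coefficients in the appropriate local parameter is the image $[f:g:h]\in\P^2$ of the cusp on~$\CC$. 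Carrying this out and recognising the resulting algebraic numbers produces the three conjugates of $[-3\alpha^2-7\alpha+1:4\alpha^2+11\alpha-3:5]$ and the four conjugates of $[3\beta^3+6\beta^2+6\beta-15:\beta^3+\beta^2-4\beta-4:9]$. (Alternatively one could, as in the remark following the proposition, evaluate $f,g,h$ and $j$ numerically at points $\tau$ approaching a cusp and watch $[f:g:h]$ converge while $j\to\infty$; we prefer the exact computation above.) One finishes by verifying directly that each of these seven points satisfies the equation of~$\CC$, and that the triple (resp.\ quadruple) is $\Gal(\overline{\Q}/\Q)$-conjugate over $\Q(\alpha)$ (resp.\ $\Q(\beta)$), which also cross-checks the orbit structure found in the previous step.

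The main obstacle is essentially bookkeeping rather than anything deep: because $f,g,h$ were built inside the \emph{conjugated} space $S_2(\widetilde{\Gamma_{A_4}(13)})$ of level~$169$ (obtained from level~$13$ via $z\mapsto 13z$), one must be careful about which local uniformiser and which normalisation of ``the value at a cusp'' is in play, so that all seven points are computed on one and the \emph{same} model~$\CC$ as in Theorem~\ref{Thm2} and genuinely form a $\Gal$-stable configuration. Once these conventions are fixed the computation is routine linear algebra on $q$-expansions, and recognising the coordinates is unproblematic since, by the combinatorial step, the only fields of definition that can occur are $\Q(\alpha)$ and~$\Q(\beta)$.
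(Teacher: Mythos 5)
Your proposal is correct and follows essentially the same route as the paper: the coordinates of each cusp are obtained as $[a_1(f|\gamma):a_1(g|\gamma):a_1(h|\gamma)]$ for a suitable $\gamma\in\PSL_2(\Z)$ sending the cusp to $\infty$, computed via the explicit matrices for the action of $S$ and $T$ on the cusp forms, and the Galois-orbit structure $3+4$ is identified so the coordinates can be recognised over $\Q(\alpha)$ and $\Q(\beta)$. The only cosmetic difference is that you derive the orbit sizes combinatorially from the $A_4$-action on the cusps of $X(13)$, whereas the paper invokes Stevens' description of the Galois action (and notes that this advance knowledge is not strictly needed).
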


The degree 3 cusps are easy to obtain; the cusp corresponding to the
point $i\infty$ on the extended upper half-plane~$\mathcal{H}^\ast$
has coordinates given by the leading coefficients of the three basis
cuspforms $f,g,h$; denoting by $\varphi$ the map
\begin{align*}
\varphi : \Gamma_{A_4}(13) \backslash \mathcal{H}^\ast
&\overset\sim\longrightarrow X_{S_4}(13) \\ \Gamma_{A_4}(13)\cdot z
&\longmapsto[f(z) : g(z) : h(z)],
\end{align*}
we see that $\varphi(i\infty) = [a_1(f) : a_1(g) : a_1(h)]$.
Expressing these coordinates in terms of $\alpha$ gives the degree $3$
cusp given in the proposition.

It is possible to determine in advance the Galois action on the cusps,
as in the following Lemma.  However, note that in practice our method
to compute the cusps algebraically, given below, does not require this
knowledge in advance.

\begin{lemma}
The absolute Galois group of~$\Q$ acts on the seven cusps with two
orbits, of sizes $3$ and~$4$.
\end{lemma}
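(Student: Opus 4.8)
The plan is to read off the Galois action from the standard group-theoretic model of the cusps of $X_{S_4}(13)$, using that this curve has a canonical model over~$\Q$, and then to perform a small finite computation.

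First I would recall the cusp description. Let $G\subset\GL_2(\F_{13})$ be the preimage of $S_4\subset\PGL_2(\F_{13})$, so that $|G|=288$ and $G$ contains all scalar matrices; let $U$ be the group of upper unitriangular matrices and $\pm U$ the order-$26$ group generated by $U$ and $-I$. Then the cusps of $X_{S_4}(13)$ are in canonical bijection with the double cosets $(\pm U)\backslash\GL_2(\F_{13})/G$, as for any modular curve $X_G(N)$ with $N$ prime (cf.\ \cite{Baran1} and the references there to Deligne--Rapoport). Since $|S_4|=24$ is coprime to~$13$, one has $gGg^{-1}\cap(\pm U)=\{\pm I\}$ for every~$g$, so every double coset has cardinality $|\pm U|\cdot|G|/2=3744$ and there are $|\GL_2(\F_{13})|/3744=7$ of them. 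This matches the complex picture $X_{S_4}(13)(\C)=\Gamma_{A_4}(13)\backslash\mathcal{H}^\ast$, whose cusps are the orbits of $A_4\subset\PSL_2(\F_{13})$ on the $84$ cusps of $X(13)$: this action is free because $\gcd(12,13)=1$, so there are $84/12=7$ orbits.

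Next I would invoke the theory of the canonical model: because $X_{S_4}(13)$ is defined over~$\Q$, the group $\Gal(\bar\Q/\Q)$ permutes the cusps, the action is unramified outside~$13$, and it factors through $\Gal(\Q(\zeta_{13})/\Q)\cong(\Z/13\Z)^\ast$, with $u\in(\Z/13\Z)^\ast$ acting (via the cyclotomic character) on $(\pm U)\backslash\GL_2(\F_{13})/G$ by left multiplication by $d_u:=\left(\begin{array}{cc}u&0\\0&1\end{array}\right)$. This is well-defined since $d_u$ normalises~$\pm U$ and commutes with right multiplication by~$G$; geometrically it records the Galois action on the $\mu_{13}$-part of the level structure on the Néron polygon at each cusp. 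Granting this, the lemma reduces to a finite check: fix a primitive root modulo~$13$ (say $u=2$), choose seven explicit coset representatives, and compute the permutation they undergo under left multiplication by~$d_2$. I expect this to be a single $3$-cycle together with a single $4$-cycle, so that the Galois orbits have sizes~$3$ and~$4$; equivalently the stabilisers in $(\Z/13\Z)^\ast$ have orders~$4$ and~$3$, so the two orbits are rational over the cubic and quartic subfields of $\Q(\zeta_{13})$, in agreement with Proposition~\ref{prop:cusps} (which is nonetheless established below independently of this lemma).

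The delicate point is pinning down the Galois formula precisely --- the exact normalisation of~$d_u$ (left versus right multiplication, $u$ versus~$u^{-1}$, and which coordinate is scaled). None of these choices affects the conclusion, however: each one gives a $(\Z/13\Z)^\ast$-action through a single torus element of $\GL_2(\F_{13})$, and the resulting orbit sizes come out the same by the same elementary count, which in practice we carry out in \Magma.
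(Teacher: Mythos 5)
Your argument is correct in outline but takes a genuinely different route from the paper's. The paper stays close to its explicit model and $q$-expansions: it applies Stevens' Theorem 1.3.1, which reads off the cyclotomic Galois action on cusps from $q$-expansions, but since the function field of $\Gamma_{A_4}(13)$ is generated only by functions with $q$-expansions in the cubic field $\Q(\alpha)$, Stevens' hypothesis holds only over $\Q(\alpha)$; the resulting order-$4$ subgroup of $(\Z/13\Z)^*$ is found to fix the three known cubic cusps and to permute the remaining four cyclically, which already forces the orbit partition $3+4$ for the full Galois group. You instead work with the abstract moduli-theoretic description -- cusps as double cosets $(\pm U)\backslash\GL_2(\F_{13})/G$ with $\sigma_u$ acting through left multiplication by $\mathrm{diag}(u,1)$ -- and reduce everything to a finite permutation computation; this is model-free and yields the full $(\Z/13\Z)^*$-action in one stroke, at the cost of having to pin down the normalisation of the Galois formula. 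Your dismissal of that ambiguity is essentially justified: scalars lie in $G$ and act trivially on the double cosets, $u\mapsto u^{-1}$ is an automorphism of the acting group, and the transposed convention is intertwined with yours by $g\mapsto g^{-1}$, so the cycle type is unaffected in every case. Do note, though, that the conclusion (a $3$-cycle plus a $4$-cycle for a generator, rather than, say, orbits of sizes $1+6$ or $1+2+4$, all of which are compatible with a $C_{12}$-action on $7$ points) is not forced by counting, so the \Magma\ computation you defer is an essential, not merely confirmatory, step.
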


\begin{proof}
We know \textit{a priori} that the cusps are all defined over
$\Q(\zeta_{13})$.  Theorem 1.3.1 in \cite{Stevens} explains how to
compute the action of $\Gal(\Q(\zeta_{N})/\Q)\cong (\Z/N\Z)^*$ on the
cusps of a modular curve~$X$ of level~$N$, provided that the field of
rational functions on~$X$ is generated by rational functions whose
$q$-expansions have rational coefficients.  This does not apply here,
since the field of modular functions for $\Gamma_{A_4}(13)$ is not
generated by functions with rational $q$-expansions, but rather by
functions with $q$-expansions in the cubic field~$\Q(\alpha)$.  But
following Stevens' method we can compute the action of the absolute
Galois group of $\Q(\alpha)$, which acts through the cyclic subgroup
of order~$4$ of $(\Z/13\Z)^*$ fixing~$\alpha$.  We find that it fixes
three cusps (which we already know from above, as they are defined
over~$\Q(\alpha)$), and permutes the remaining four cyclically.  It
follows that the other four cusps are also permuted cyclically by the
full Galois group, and hence have degree~$4$ as claimed.
\end{proof}

It remains to find the coordinates of one cusp of degree~$4$.

Let $c \in \Gamma_{A_4}(13) \backslash \P^1(\Q)$ be any cusp.  Then
there exists $\gamma \in \PSL_2(\Z) \backslash \Gamma_{A_4}(13)$ such
that $\gamma(c) = \infty$, and hence, \[\alpha(c) = [a_1(f|\gamma) :
  a_1(g|\gamma) : a_1(h|\gamma)].\] Since we already computed in the
previous section the action of $\PSL_2(\Z)$ on the cuspforms $f,g,h$,
we can compute the right-hand side of this equation for any $\gamma$.
With some work one can show that the cubic cusps are obtained using
$c=\infty, 1$ and~$7/6$, while the quartic cusps are obtained from
$c=2,3,6$ and~$9$; or we can simply choose random $\gamma \in
\PSL_2(\Z)$ until we find a point which is not one of the three
conjugates we already have. This proves Proposition~\ref{prop:cusps}.

Next we find a cubic curve passing through these 7 points.

\begin{proposition}
The following cubic passes through the seven cusps:
\[
5 X^{3} - 19 X^{2} Y - 6 X Y^{2} + 9 Y^{3} + X^{2} Z - 23 X Y Z - 16
Y^{2} Z + 8 X Z^{2} - 22 Y Z^{2} + 3 Z^{3}.
\]
\end{proposition}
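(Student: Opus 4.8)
The plan is to reduce the claim to a finite verification, after a dimension count that makes the existence of such a $\Q$-rational cubic automatic. The space of plane cubics in $\P^2_\Q$ is the $10$-dimensional $\Q$-vector space spanned by the ten degree-$3$ monomials in $X,Y,Z$. By Proposition~\ref{prop:cusps} the seven cusps form two Galois orbits: a triple conjugate over the cubic field $\Q(\alpha)$ (with $\alpha$ a root of $x^3+x^2-4x+1$), and a quadruple conjugate over the quartic field $\Q(\beta)$ (with $\beta$ a root of $x^4+x^3+2x^2-4x+3$). A cubic with rational coefficients that vanishes at one point of a Galois orbit vanishes at all points of that orbit, so ``passes through all seven cusps'' is equivalent to ``vanishes at the displayed degree-$3$ cusp and at the displayed degree-$4$ cusp''. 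Evaluating a general cubic at the degree-$3$ cusp gives an element of $\Q(\alpha)$, whose vanishing amounts to three $\Q$-linear conditions on the coefficients (the vanishing of its coordinates in the basis $1,\alpha,\alpha^2$); similarly the degree-$4$ cusp imposes four $\Q$-linear conditions.

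This gives a homogeneous $\Q$-linear system of at most seven equations in ten unknowns, hence a solution space of dimension at least three; in particular a nonzero $\Q$-rational cubic through the seven cusps exists, so the proposition is not vacuous. To complete the proof one simply exhibits the displayed polynomial as a member of the solution space: substitute for $(X,Y,Z)$ the coordinates $[-3\alpha^2-7\alpha+1 : 4\alpha^2+11\alpha-3 : 5]$, expand, reduce modulo $x^3+x^2-4x+1$, and check that the result is $0$ in $\Q(\alpha)$; then repeat with the coordinates $[3\beta^3+6\beta^2+6\beta-15 : \beta^3+\beta^2-4\beta-4 : 9]$, reducing modulo $x^4+x^3+2x^2-4x+3$ in $\Q(\beta)$. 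By the orbit remark, these two checks cover all seven cusps. Both are short mechanical computations, performed in \Sage\ or \Magma.

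No step here is a genuine obstacle: the only content is the substitution-and-reduction check, and existence is free from the dimension count. Two ancillary points are worth recording for the later determination of the $j$-map. First, one should verify that no conic passes through the seven cusps, i.e.\ that the analogous linear system in the $6$-dimensional space of conics has only the zero solution; this short check justifies building the denominator from a cubic rather than a conic, so that $d_0=d^{13}$ has the correct degree. Second, the solution space of admissible cubics has dimension greater than one, so many cubics would serve; the particular one above is chosen because, together with the model~$\CC$, it makes the subsequent linear-algebra computation of the numerator $n$ on $q$-expansions yield the coefficients recorded in Theorem~\ref{Thm2}.
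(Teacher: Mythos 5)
Your argument is correct and is essentially the paper's own: the same count (a $10$-dimensional space of cubics cut down by the seven cusp conditions, Galois-grouped into $3+4$ rational linear conditions, leaving a $3$-dimensional system defined over $\Q$) followed by a mechanical verification that the displayed cubic vanishes at one representative of each Galois orbit. The only cosmetic difference is the stated reason for picking this particular element of the $3$-dimensional system: the paper selects it by LLL-reduction so that it does not vanish at any rational point of $\CC$ (to avoid poles of $d^{13}$ at the points where $j$ is later evaluated), rather than for the sake of the numerator computation.
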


\begin{proof}
The full linear system of degree~$3$ associated
to~$\mathcal{O}_{\P^2}(1)$ has dimension~$10$, and the subsystem
passing though the $7$ cusps has dimension~$3$ with a basis in
$\Q[X,Y,Z]$.  Using LLL-reduction we found a short element which does
not pass through any rational points on~$\CC$ (to simplify the
evaluation of the $j$-map at these points later).
\end{proof}

Since all cusps have ramification degree 13 under the $j$-map, a
possible choice for the denominator of the $j$-map is to
take the 13th power~$d_0=d^{13}$ of this cubic.

Next we turn to the numerator~$n(X,Y,Z)$, which is a polynomial of
degree~$39$.  The idea is to consider an arbitrary degree~$39$
polynomial in the $q$-expansions of the cusp forms $f,g,h$, and
compare it with the known $q$-expansion of $j\cdot d(f,g,h)^{13}$.  This
gives a system of linear equations which can be solved.

The full linear system of degree~$39$ has degree~$820$, but modulo the
defining quartic polynomial for~$\CC$ we can reduce the number of
monomials needing to be considered to only~$154$.  We chose those
monomials in which either $X$ does not occur, or else $Y$ does not
occur and $X$ has exponent~$1$ or~$2$, but this is arbitrary.

Now we consider the equation \[ n(X,Y,Z) - j(X,Y,Z)\cdot d(X,Y,Z)^{13} =
0,\] as a $q$-expansion identity after substituting $f,g,h$ for
$X,Y,Z$.  Using $250$ terms in the $q$-expansions (giving a margin to
safeguard against error) and comparing coefficients gives $250$
equations for the unknown coefficients of $n(X,Y,Z)$.  There is a
unique solution, which has rational coefficients.  Although we have
apparently only shown that the equation holds modulo~$q^{250}$, it
must hold identically, since we know that there is exactly one
solution.

The expression for $n(X,Y,Z)$ we obtain this way is too large to
display here (it has $151$ nonzero integral coefficients of
between~$46$ and $75$ digits), but can easily be used to evaluate the
$j$-map at any given point on the curve~$\CC$.  For the sake of
completeness, however, we give here explicitly the zeroes of the
$j$-map from which (together with the poles) it may be recovered; the
complete expression may be seen in \cite{SageWorksheet}.

The $91$ zeroes of $j$ consist of $29$ points with multiplicity~$3$
and $4$ with multiplicity~$1$, all defined over the number field
$M=\Q(\delta)$, where $\delta$ has minimal polynomial
\[
x^8 - 9x^6 + 32x^4 - 9x^2 + 1,
\]
which is Galois with group~$D_4$.  This field is the splitting field
of the polynomial~$P(t)$ defined in the next section, so is also the
field of definition of the points in the fibre over~$j=0$ of the
covering map $X_0(13)\to X(1)$.
Some of the $33$ zeroes are defined over the quartic subfields
$\Q(\alpha)$ and $\Q(\beta)$, where $\alpha$ and~$\beta$ have minimal
polynomials~$x^4 + 13x^2 - 39$ and~$x^4 - 13x^2 + 52$ respectively.
Their coordinates are as follows (together with all Galois
conjugates);  with multiplicity~$1$ we have
\[
[3\beta^3 + 2\beta^2 - 15\beta - 14 :
-3\beta^3 + 4\beta^2 + 29\beta - 22 :
-3\beta^3 - 4\beta^2 + 25\beta + 46 ],
\]
and with multiplicity~$3$ we have the rational point~$[1:0:0]$, the
degree~$4$ points
\begin{gather*}
[2(-\alpha^2 + 5\alpha - 4) : 
-\alpha^3 - \alpha^2 + \alpha + 6 :
\alpha^3 + 14\alpha - 35],\\
[\beta^3 - 2\beta^2 - 9\beta + 14 :
2(\beta^2 - \beta - 2) :
2(-\beta^3 - 2\beta^2 + 7\beta + 16) ],\\
[4(\beta - 1) :
\beta^3 - 7\beta - 10 :
2(\beta^3 + 2\beta^2 - 7\beta - 12) ],
\end{gather*}
and the degree~$8$ points
\begin{multline*}
[\delta^7 + 2\delta^6 - 8\delta^5 - 8\delta^4 +  36\delta^3 +
  12\delta^2 - 5\delta - 2 :
8(\delta^3 + \delta^2) :\\
-\delta^7 - 2\delta^6 + 4\delta^5 - 28\delta^3 - 8\delta^2 + 5\delta +
2]
\end{multline*}
and 
\begin{multline*}
[2(-\delta^6 + 4\delta^5 - 10\delta^3 + 4\delta - 1) :
-3\delta^7 + 2\delta^6 + 28\delta^5 - 32\delta^4 - 52\delta^3 +
16\delta^2 + 7\delta - 2 :\\
\delta^7 - 2\delta^6 - 4\delta^5 + 4\delta^4 + 4\delta^3 + 8\delta^2 -
9\delta + 2 ]. 
\end{multline*}

\section{Proof of Corollary \ref{C2}}\label{sec:pf-cor}
Evaluating the $j$-map at the six points in $\CC(\Q(\sqrt{13}))$
exhibited in the introduction yields the five $j$-invariants listed in
the statement of Corollary~\ref{C2}, together with $j=0$, which is the
image of~$[1:0:0]$. We know that any elliptic curve~$E$ over
$\Q(\sqrt{13})$ with one of these six $j$-invariants has $H_{E,13}
\subseteq A_4$.  Any elliptic curve with $j=0$ has complex
multiplication, with mod-$13$ image contained in a split Cartan
subgroup (split since $13\equiv1\pmod3$).  Hence what remains to prove
in this section is that $H_{E,13} \cong A_4$ for the five nonzero
$j$-invariants listed.

\begin{lemma}
Let $l$ be a prime for which $X_0(l)$ has genus~$0$ (that is, $l = 2,3,5,7,13$). There is an explicit polynomial $F_l(X,Y) \in \Z[X,Y]$ such that, if $E/K$ is an elliptic curve over a number field, then \[ H_{E,l} \cong \Gal(F_l(X,j(E))).\]
\end{lemma}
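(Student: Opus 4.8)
The plan is to exploit the fact that when $X_0(l)$ has genus~$0$, it is a $\mathbb{P}^1$ over $\Q$ with Hauptmodul $t_l$, and the $j$-map $X_0(l)\to X(1)$ is a rational function $\Phi_l(t_l)=j$ of degree $l+1$. Concretely, following Sutherland, I would take the classical formulae: there is $\Phi_l(T)\in\Q(T)$ with $\Phi_l(t_l)=j$, so that $t_l$ satisfies a polynomial equation $\Phi_l(T)-j(E)=0$ of degree $l+1$ in $T$ over $K(j(E))$. Clearing denominators gives a polynomial $F_l(X,Y)\in\Z[X,Y]$, of degree $l+1$ in $X$, such that $F_l(X,j(E))$ is (a scalar multiple of) the minimal-type polynomial whose roots are the $t_l$-coordinates of the points of $X_0(l)$ lying above $j(E)$.

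The key observation is that these $l+1$ roots are in natural bijection with the $l+1$ subgroups of order $l$ in $E[l]$, equivalently with the $l+1$ points of $\mathbb{P}^1(\F_l)=\mathbb{P}(E[l])$; and this bijection is $G_K$-equivariant. Hence the splitting field of $F_l(X,j(E))$ over $K$ is exactly the fixed field of the kernel of the action of $G_K$ on $\mathbb{P}^1(\F_l)$ induced by $\bar\rho_{E,l}$, which is precisely the action factoring through $H_{E,l}=G_{E,l}/(\text{scalars})\subseteq\PGL_2(\F_l)$. Therefore $\Gal(F_l(X,j(E))/K)$, viewed as a permutation group on the roots, is conjugate in $S_{l+1}$ to $H_{E,l}$ acting on $\mathbb{P}^1(\F_l)$. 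Since $H_{E,l}$ is itself well-defined only up to conjugacy in $\PGL_2(\F_l)$, and for $j(E)\neq 0,1728$ depends only on $j(E)$, this gives the asserted isomorphism $H_{E,l}\cong\Gal(F_l(X,j(E)))$, where the right-hand side denotes the Galois group of the splitting field. I would also note the standard caveat that this requires $F_l(X,j(E))$ to be separable, which holds whenever $j(E)\neq 0,1728$ and $E$ has potentially good reduction away from the relevant primes — equivalently, outside the finitely many $j$-values where the fibre of $X_0(l)\to X(1)$ is non-reduced (the cusps, $j=0$, $j=1728$); for our application the $j$-invariants in Corollary~\ref{C2} are none of these.

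First I would recall, for each $l\in\{2,3,5,7,13\}$, an explicit Hauptmodul and the corresponding rational $j$-map (for $l=5,13$ these are exactly the Klein-type formulae already used in Section~\ref{sec:pf-thm}; the $l=13$ map $\Phi_{13}$ being the one tabulated by, e.g., Maier or classically); then set $F_l(X,Y):=$ numerator of $\Phi_l(X)-Y$ after clearing denominators in $\Q[X,Y]$, scaled to have content~$1$ in $\Z[X,Y]$. Second, I would establish the moduli-theoretic bijection between roots of $F_l(X,j(E))$ and the $l+1$ order-$l$ subgroups of $E[l]$, and check its $G_K$-equivariance — this is where one uses that $X_0(l)$ is a (coarse) moduli space for $(E,C)$ with $C$ cyclic of order $l$, and that the $t_l$-coordinate of such a point is a rational function of $(E,C)$ defined over the field of definition of the pair. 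Third, I would translate equivariance of the bijection into the statement that $G_K$ acts on the roots through the image of $\bar\rho_{E,l}$ in $\PGL_2(\F_l)$, i.e. through $H_{E,l}$, and conclude.

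The main obstacle is the second step: making the $G_K$-equivariant identification of the root set with $\mathbb{P}(E[l])$ genuinely precise, including handling the fact that $X_0(l)$ is only a coarse (not fine) moduli space, so that a priori one has a well-defined map on points but must argue that it is defined over the correct field and is Galois-compatible. In practice this is handled by observing that the Hauptmodul $t_l$, being a modular function for $\Gamma_0(l)$, can be written as a rational function in $j$ and a $j$-invariant of the cyclic $l$-isogenous curve (or equivalently in terms of the Weber/Fricke functions), so that $t_l(E,C)\in K(j(E),j(E/C))$ transforms correctly under $G_K$; the separability caveat noted above disposes of the degenerate fibres. Everything else is bookkeeping with the explicit $\Phi_l$.
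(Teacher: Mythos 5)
Your proposal is correct and follows essentially the same route as the paper: both define $F_l(X,Y)$ by clearing denominators in the Hauptmodul expression for the degree-$(l+1)$ map $X_0(l)\to X(1)$ (the paper writes $j=P(t)/t$ and sets $F_l(X,Y)=P(X)-YX$), and both identify the roots of $F_l(X,j(E))$ Galois-equivariantly with the $l+1$ cyclic $l$-isogenies, i.e.\ with $\P^1(\F_l)$ carrying the faithful $H_{E,l}$-action. Your additional remarks on separability and the coarse-moduli subtlety are handled in the paper simply by noting that the $j$-map is unramified away from $j=0,1728$.
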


\begin{proof}
The function field of $X_0(l)$ is generated by a single modular
function~$t$ (the so-called ``Hauptmodul''), and classically there is
a canonical choice of such, for each $l$.  The $j$-function is a
rational function of $t$ of degree $l+1$ of the form $P(t)/t$, where
$P$ is an explicit integral polynomial of degree $l+1$.

Define $F_l(X,Y) = P(X) - YX \in \Z[X,Y]$. Let $E/K$ be an elliptic
curve over a number field, and consider the set of roots of
$F_l(X,j(E)) \in K[X]$ over $\bar{\Q}$. As a set, this is in bijection
with the set of preimages~$t$ of~$j(E)$ under the $j$-map $X_0(l)\to
X(1)$ (which is unramified away from $j=0$ and $j=1728$), and hence is
in Galois equivariant bijection with the $l$-isogenies on $E$.  Hence
the Galois action on the set of $l+1$ isogenies is isomorphic to the
Galois action on the roots of $F_l(X,j(E))$.
\end{proof}


For $l=13$, we have $P(t)=(t^{2} + 5t + 13) \cdot (t^{4} +
7t^{3} + 20t^{2} + 19t + 1)^{3}$, and hence
\[
 F_{13}(X,Y) = (X^{2} + 5 X + 13) \cdot (X^{4} + 7 X^{3} + 20 X^{2} +
 19 X + 1)^{3} - XY.
\]

For each $j$-invariant listed in Corollary~\ref{C2} we may verify that
$F_{13}(X,j)$ has Galois group isomorphic to~$A_4$
over~$\Q(\sqrt{13})$, and for the rational $j$-values, isomorphic
to~$S_4$ over~$\Q$.


\section{Proof of Proposition \ref{P3}}\label{sec:pf-gp}
By Proposition~\ref{Hasse} and Lemma~\ref{lemma}, Proposition~\ref{P3} is equivalent to the following purely group theoretic statement.

\begin{proposition}\label{prop:ell-congruence}
Let $H \subseteq \PSL_2(\F_l)$. Then $H$ is Hasse if and only if one of the following holds.
\begin{enumerate}
\item
$H \cong A_4$ and $l \equiv 1\pmod{12}$;
\item
$H \cong S_4$ and $l \equiv 1\pmod{24}$;
\item
$H \cong A_5$ and $l \equiv 1\pmod{60}$;
\item
$H \cong D_{2n}$ and $l \equiv 1\pmod{4}$, where $n > 1$ is a divisor of $\frac{l-1}{2}$, and the pullback of $H$ to $\GL_2(\F_l)$ is contained in the normaliser of a split Cartan subgroup.
\end{enumerate}
\end{proposition}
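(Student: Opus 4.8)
The plan is to translate the "Hasse" condition into an entirely combinatorial statement about the action of a subgroup $H \subseteq \PSL_2(\F_l)$ on the projective line $\P^1(\F_l)$, and then run through the Dickson-style classification of subgroups of $\PSL_2(\F_l)$, checking the two defining properties (every element has a fixed point; $H$ has no global fixed point) case by case. The first reduction is standard: if $H$ has a global fixed point then it is conjugate into the image of a Borel, i.e.\ into the image of the upper-triangular matrices; such an $H$ is automatically \emph{not} Hasse by the second bullet, so we may assume from the start that $H$ fixes no point of $\P^1(\F_l)$. The substantive work is then to decide, for each remaining conjugacy class of subgroups, whether every individual element of $H$ nonetheless fixes a point; an element $g \in \PGL_2(\F_l)$ fixes a point of $\P^1(\F_l)$ exactly when its image in $\PGL_2(\F_l)$ lifts to a matrix in $\GL_2(\F_l)$ with an eigenvalue in $\F_l$, equivalently when the characteristic polynomial of a lift has a root in $\F_l$; for $g$ of order $m$ this amounts to a congruence condition on $l$ modulo $m$ governing whether the relevant eigenvalue-ratio (a primitive $m$-th root of unity in $\overline{\F_l}$) already lies in $\F_l$. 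So each case reduces to: list the element orders occurring in $H$, and require $\F_l$ to contain the corresponding roots of unity.

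Concretely I would organise it as follows. By Dickson's classification (already invoked in Section~\ref{sec:pf-thm}), a subgroup $H \subseteq \PSL_2(\F_l)$ with no fixed point on $\P^1(\F_l)$ is, up to conjugacy, one of: a dihedral group $D_{2n}$ sitting inside the image of the normaliser of a Cartan subgroup (split or non-split) but not inside the Cartan itself; one of the exceptional groups $A_4$, $S_4$, $A_5$; or all of $\PSL_2(\F_l)$ or (when it occurs in $\PSL_2$) $\PGL_2$. First dispose of the large groups: $\PSL_2(\F_l)$ and $\PGL_2(\F_l)$ always contain an element of order $l$ (a unipotent, which fixes exactly one point) but also contain an element with no fixed point — e.g.\ an element of order $\frac{l+1}{2}$ coming from a non-split Cartan, whose lift has irreducible characteristic polynomial — so these are never Hasse, eliminating the "big" cases. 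Next, the dihedral case: if $H \cong D_{2n}$ lies in the normaliser of a \emph{non-split} Cartan, it contains an element of order $n \mid \frac{l+1}{2}$ with irreducible characteristic polynomial, hence with no fixed point, so $H$ is not Hasse; if instead $H$ lies in the normaliser of a \emph{split} Cartan, every cyclic element is diagonalisable over $\F_l$ (fixed points are the two eigenlines) and every "reflection" element is an order-$2$ antidiagonal element, whose lift $\begin{pmatrix}0 & b\\ c & 0\end{pmatrix}$ has characteristic polynomial $t^2 - bc$, which has a root in $\F_l$ precisely when $bc$ is a square, i.e.\ precisely when $-1$ is a square in $\F_l$, i.e.\ $l \equiv 1 \pmod 4$ (using that the determinant lies in the squares by Lemma~\ref{lemma}); combined with $n>1$ (needed so there is no global fixed point) this yields case (4). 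Finally the exceptional groups: $A_4$ has elements of orders $1,2,3$, so Hasseness requires $\F_l$ to contain primitive cube roots of unity and the order-$2$ reflections to have split characteristic polynomial — working out both conditions gives $l\equiv 1\pmod{12}$; $S_4$ additionally has elements of order $4$, forcing $l\equiv 1\pmod{24}$; $A_5$ has elements of orders $1,2,3,5$, forcing $l\equiv 1\pmod{60}$. In each exceptional case one must separately check that no global fixed point exists — but an exceptional subgroup is non-abelian and simple-ish enough that it cannot be conjugated into a Borel, so this is automatic; and one must check the congruence is not only necessary but sufficient, i.e.\ that when $l$ satisfies it the group $H$ of the stated isomorphism type actually embeds in $\PSL_2(\F_l)$ and every element genuinely has a fixed point.

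For the converse direction the key point is that the congruence conditions I extracted are exactly the conditions under which (a) a subgroup of the stated abstract isomorphism type exists inside $\PSL_2(\F_l)$, and (b) each of its elements, being of order dividing one of the listed integers, has characteristic polynomial (of any lift) splitting over $\F_l$ — so each element fixes a point — while the subgroup as a whole, being non-cyclic and not inside a Borel, fixes none; hence $H$ is Hasse. The cleanest way to package this is a short lemma: for $g \in \PSL_2(\F_l)$ of order $m$, $g$ fixes a point of $\P^1(\F_l)$ iff either $p \mid m$ (unipotent case, $m = l$) or the primitive $m$-th roots of unity lie in $\F_l$, together with the parity bookkeeping coming from $\det \in (\F_l^\times)^2$ that distinguishes the split-Cartan elements from the non-split ones. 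I would state and prove this lemma first, then the case analysis becomes essentially mechanical.

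The main obstacle I anticipate is the dihedral case (4), specifically getting the "split vs.\ non-split" dichotomy and the role of the determinant condition right: one has to be careful that we are working in $\PSL_2$ rather than $\PGL_2$, so that reflections in the normaliser of a split Cartan really do have square determinant, and that it is precisely $l\equiv 1\pmod 4$ (and not, say, a condition involving $n$) that makes their characteristic polynomials split; conversely one must rule out that an element of $D_{2n}$ inside a non-split-Cartan normaliser could accidentally fix a point. The exceptional cases are more bookkeeping than difficulty, but verifying sufficiency — that the stated congruences really do force \emph{every} element to have a rational fixed point, including checking there are no "extra" element orders hiding in $S_4$ or $A_5$ modulo scalars — requires care with the passage between $\SL_2$ and $\PSL_2$.
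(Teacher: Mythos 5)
Your proposal is correct and shares the paper's overall skeleton (Section~\ref{sec:pf-gp}): reduce via Dickson's classification to the cyclic/dihedral/exceptional trichotomy, extract the congruences on $l$ from the orders of elements that must fix a point of $\P^1(\F_l)$, and check separately that no point is fixed by all of $H$. Where you genuinely diverge is in the proof of the central divisibility criterion. The paper's Lemma~\ref{divides} shows that an element $h$ of order $r$ fixing a point satisfies $r \mid \frac{l-1}{2}$ by counting orbits of $\langle h\rangle$ on $\P^1(\F_l)$ and invoking Sutherland's observation that the sign of $h$ as a permutation of $\P^1(\F_l)$ equals the quadratic character of $\det h$, hence is $+1$ for $H \subseteq \PSL_2(\F_l)$; the mirror count $s=(l+1)/r$ for fixed-point-free elements gives $r \mid \frac{l+1}{2}$, and coprimality of $\frac{l-1}{2}$ and $\frac{l+1}{2}$ closes the converse. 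You instead work with eigenvalues of a lift to $\GL_2^+(\F_l)$: the eigenvalue ratio is a square, hence of order dividing $\frac{l-1}{2}$ (split case) or $\frac{l+1}{2}$ (non-split case). Both routes are valid and of comparable length; yours makes the role of the determinant condition (Lemma~\ref{lemma}) more transparent, while the paper's reuses Sutherland's permutation-sign machinery. For the absence of a global fixed point the paper deploys the double-coset decomposition of $\P^1(\F_l)$ as an $H$-set (Lemma~\ref{Mackey}), whereas your remark that a non-cyclic subgroup of order prime to $l$ cannot lie in a Borel is simpler and equally conclusive. Two points to tighten: the correct criterion is that the characteristic polynomial of a lift of $h$ splits over $\F_l$, not merely that the eigenvalue \emph{ratio} lies in $\F_l$ (an order-$2$ element of a non-split Cartan normaliser has ratio $-1 \in \F_l$ yet fixes nothing when $l \equiv 3 \pmod 4$) --- your ``parity bookkeeping'' carries real weight there and should be spelled out; and your list of fixed-point-free subgroups should include cyclic subgroups of a non-split Cartan, which are excluded from being Hasse by the first bullet rather than the second.
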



We begin the forward implication of this Proposition by quoting the
following lemma of Sutherland, which is a small piece of his Lemma 1.

\begin{lemma}[Sutherland]
If $H \subseteq \PSL_2(\F_l)$ is Hasse, then $l \nmid |H|$.
\end{lemma}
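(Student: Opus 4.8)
The plan is to argue by contradiction: assume $l \mid |H|$ and produce a violation of one of the two Hasse conditions. First I would invoke Cauchy's (or Sylow's) theorem to extract a subgroup $P \leq H$ of order exactly~$l$; this order is forced because $l$ exactly divides $|\PSL_2(\F_l)| = \tfrac12 l(l^2-1)$. Any nontrivial element of $P$ is a unipotent element of $\PSL_2(\F_l)$, and a unipotent element has a \emph{unique} fixed point on $\P^1(\F_l)$; hence $P$ fixes a unique point $x_P \in \P^1(\F_l)$. The key auxiliary observation is that the stabiliser in $\PSL_2(\F_l)$ of any point of $\P^1(\F_l)$ is a Borel subgroup, which contains a single subgroup of order~$l$ (its unipotent radical); so two \emph{distinct} order-$l$ subgroups can never share a fixed point. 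Equivalently, $P' \mapsto x_{P'}$ is an injection from the set of Sylow $l$-subgroups of $H$ into $\P^1(\F_l)$, and it is $H$-equivariant for the conjugation action on the source.

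Next I would split on the number $n_l$ of Sylow $l$-subgroups of $H$, which satisfies $n_l \equiv 1 \pmod l$. If $n_l = 1$, then $P$ is normal in $H$, so every $h \in H$ conjugates $P$ to itself and therefore fixes its unique fixed point $x_P$; thus $H$ fixes $x_P$, contradicting the second Hasse condition. If $n_l > 1$, then $n_l \geq l+1$, while the injection above forces $n_l \leq |\P^1(\F_l)| = l+1$; hence $n_l = l+1$ and the map is a bijection. Since $H$ permutes its Sylow $l$-subgroups transitively by conjugation, equivariance shows that $H$ acts \emph{transitively} on $\P^1(\F_l)$.

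To finish the case $n_l > 1$ I would apply Burnside's counting lemma to this transitive action: $1 = \tfrac{1}{|H|}\sum_{h \in H}\#\mathrm{Fix}(h)$. The first Hasse condition gives $\#\mathrm{Fix}(h) \geq 1$ for every $h$, while the identity contributes $\#\mathrm{Fix}(e) = l+1$, so $\sum_{h} \#\mathrm{Fix}(h) \geq |H| + l > |H|$, making the orbit count exceed~$1$ — a contradiction. Both branches being impossible, $l \nmid |H|$.

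I do not expect a serious obstacle here; the only point requiring a little care is the claim that distinct order-$l$ subgroups have distinct fixed points (and the ensuing equivariance), which rests on a Borel subgroup of $\PSL_2(\F_l)$ having a unique subgroup of order~$l$. An alternative, shorter route would bypass the Sylow bookkeeping by quoting Dickson's classification: a subgroup of $\PSL_2(\F_l)$ whose order is divisible by the prime~$l$ is either contained in a Borel subgroup — hence fixes a point of $\P^1(\F_l)$, violating the second Hasse condition — or equals all of $\PSL_2(\F_l)$, which contains an element (the image of a generator of a non-split maximal torus) fixing no point of $\P^1(\F_l)$, violating the first. I would present the self-contained Sylow–Burnside argument and simply remark on this Dickson shortcut.
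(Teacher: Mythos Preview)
Your argument is correct, and both the self-contained Sylow--Burnside route and the Dickson shortcut you sketch are valid. Note, however, that the paper does not actually prove this lemma: it merely states it as ``a small piece of [Sutherland's] Lemma~1'' and defers to \cite{Drew} for the proof. So there is no in-paper argument to compare against; you have supplied a proof where the authors chose to cite one.
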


We may now invoke the following classical result (see \cite[Theorem
  XI.2.3]{LangModForms}).

\begin{fact}
\label{classical}
Let $H$ be a subgroup of $\PGL_2(\F_l)$ with $l \nmid |H|$, and let $G$ denote its pullback to $\GL_2(\F_l)$. Then one of the following occurs:

\begin{itemize}
\item
$H$ is cyclic, and $G$ is contained in a Cartan subgroup;
\item
$H$ is dihedral, and $G$ is contained in the normaliser of a Cartan subgroup;
\item
$H$ is isomorphic to $A_4$, $S_4$ or $A_5$.
\end{itemize}
\end{fact}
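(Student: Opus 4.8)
This is the standard classification of the prime-to-$l$ subgroups of $\PGL_2(\F_l)$ (the ``tame'' part of Dickson's theorem), and my plan is to run Klein's classical orbit-counting argument over $\overline{\F}_l$ in place of $\C$. Throughout recall that $l$ is odd, and note first that since $l\nmid|H|$ and $l\nmid l-1=|\F_l^\ast|$, the pullback $G$ also has order prime to~$l$. Two facts drive the argument: (i) every non-identity element $h\in\PGL_2(\overline{\F}_l)$ of order prime to~$l$ is semisimple (its minimal polynomial is separable), hence conjugate to a non-identity diagonal element, and so has exactly two fixed points on $\P^1(\overline{\F}_l)$; and (ii) the stabiliser in $H$ of a point of $\P^1(\overline{\F}_l)$ is a prime-to-$l$ subgroup of a Borel subgroup, so it injects into the maximal torus of that Borel and is cyclic.

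Assume $H\neq 1$ (else $H$ is cyclic, done). Let $P\subseteq\P^1(\overline{\F}_l)$ be the set of points with non-trivial $H$-stabiliser, and count pairs $(h,x)$ with $h\neq 1$ fixing $x$ in two ways; writing $t$ for the number of $H$-orbits on $P$ and $n_1,\dots,n_t$ for the corresponding stabiliser orders, one obtains the Klein relation $\sum_{i=1}^{t}(1-1/n_i)=2-2/|H|$. Since each $n_i\geq 2$ and the right-hand side is $<2$, this forces $t\leq 3$; solving the relation under the constraint $n_i\mid|H|$ leaves exactly the possibilities $t=2$ with $n_1=n_2=|H|$ (so $H$ fixes two points of $\P^1(\overline{\F}_l)$, lies in the image of a torus, and is cyclic); $t=3$ with $(n_1,n_2,n_3)=(2,2,m)$ for some $m$; and $t=3$ with $(n_1,n_2,n_3)$ one of $(2,3,3)$, $(2,3,4)$, $(2,3,5)$, forcing $|H|=12,24,60$ respectively.

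To identify the $t=3$ cases I would use that $H$ acts faithfully on every orbit (a non-identity M\"obius transformation fixes at most two points). In the $(2,3,3)$ case the faithful action on an orbit of size~$4$ embeds $H$ into $S_4$ with image of order~$12$, so $H\cong A_4$. For $(2,3,4)$ and $(2,3,5)$ I would pin down the Sylow subgroups using one recurring mechanism: a central involution $z$ of a subgroup forces that subgroup into $C_{\PGL_2(\overline{\F}_l)}(z)$, which is the normaliser of a torus, and every element there outside the torus squares to the identity; while the centraliser of a non-central semisimple element is a torus. This excludes $Q_8$ as a subgroup of $\PGL_2(\overline{\F}_l)$ (as well as the abelian non-cyclic groups of order~$8$, which would lie in a torus hence be cyclic), so a prime-to-$l$ subgroup of order~$8$ is $\Z/8$ or $D_8$; a short argument with the normaliser of a Sylow subgroup then yields $H\cong S_4$ when $|H|=24$ and shows $H$ has more than one Sylow $5$-subgroup, hence is simple, hence $\cong A_5$, when $|H|=60$. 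The same mechanism settles $(2,2,m)$: the index-$2$ cyclic subgroup lies in the image of a torus $T$, $H$ normalises $T$, and every element outside $T$ in the projective normaliser is an involution, so $H$ is dihedral (or cyclic if $m\leq 2$). Alternatively, all three exceptional cases can be disposed of at once by lifting the $2$-dimensional representation of $G$ to characteristic~$0$ (possible because $\F_l[G]$ is semisimple), realising $H$ as a finite subgroup of $\PGL_2(\C)$, and quoting Klein's classical classification there.

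It remains to upgrade from $H$ to $G$ and from $\overline{\F}_l$ to $\F_l$. If $H$ is cyclic then $G$ is abelian and consists of semisimple elements (all of order prime to~$l$), hence is simultaneously diagonalisable over $\overline{\F}_l$; if $G$ is non-scalar it contains a regular semisimple $g_0\in\GL_2(\F_l)$, so $G\subseteq C_{\GL_2(\overline{\F}_l)}(g_0)=:T$, a maximal torus defined over $\F_l$, whence $G\subseteq T(\F_l)$, a Cartan subgroup. If $H$ is dihedral with normal cyclic part $C$ of order $\geq 2$, let $G'$ be the pullback of $C$; by the previous step $G'$ lies in a maximal torus $T$ defined over $\F_l$ with $T=C_{\GL_2(\overline{\F}_l)}(G')$, and since $G$ normalises $G'$ it normalises $T$, so $G$ lies in the normaliser of the Cartan subgroup $T(\F_l)$. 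I expect the fiddliest point to be the polyhedral identification --- ruling out the order-$24$ and order-$60$ groups other than $S_4$ and $A_5$ --- which is precisely why falling back on the characteristic-$0$ lifting and Klein's theorem is attractive.
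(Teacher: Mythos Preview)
The paper does not prove Fact~\ref{classical} at all: it is stated as a classical result and attributed to Lang's \emph{Introduction to Modular Forms}, Theorem~XI.2.3. So there is no ``paper's own proof'' to compare against; your proposal supplies a proof where the paper simply cites one.

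Your argument is essentially the classical Klein orbit-count transported to~$\overline{\F}_l$, and it is correct in outline. A few small comments. First, your parenthetical that abelian non-cyclic groups of order~$8$ ``would lie in a torus hence be cyclic'' is not quite right as stated: an abelian subgroup of~$\PGL_2$ need not lift to an abelian subgroup of~$\GL_2$, so you cannot immediately invoke simultaneous diagonalisation. The correct mechanism is the one you already isolated---pick a regular semisimple element and note that its $\PGL_2$-centraliser is a torus (or, for an involution, the normaliser of a torus)---and then argue directly that $(\Z/2)^3$ and $\Z/4\times\Z/2$ cannot fit inside such a normaliser. Second, your ``short argument with the normaliser of a Sylow subgroup'' for the $(2,3,4)$ and $(2,3,5)$ cases is genuinely short but not entirely trivial; you are right to flag it as the fiddliest point. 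Your alternative---lifting the faithful $2$-dimensional representation of~$G$ to characteristic~$0$ via the semisimplicity of~$\F_l[G]$ (since $l\nmid|G|$) and quoting Klein over~$\C$---is cleaner and entirely valid, and is in fact how many modern references dispatch these cases. Finally, your descent from~$\overline{\F}_l$ to~$\F_l$ in the cyclic and dihedral cases (via the centraliser of a regular semisimple element being an $\F_l$-defined maximal torus) is correct and is the standard way to recover the Cartan-subgroup statement.
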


Clearly $H$ being cyclic is incompatible with $H$ being Hasse, so either $H \cong D_{2n}$ for $n > 1$, or $H$ is one of $A_4$, $S_4$ or $A_5$. 

\begin{lemma}
\label{divides}
Let $H \subseteq \PSL_2(\F_l)$ be Hasse, and let $h \in H$. Then the order of $h$ must divide $\frac{l-1}{2}$. 
\end{lemma}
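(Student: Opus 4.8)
The plan is to use the key Hasse property that every element $h \in H$ fixes a point of $\P^1(\F_l)$, together with the constraint from the previous lemma that $l \nmid |H|$, hence $l \nmid \ord(h)$. First I would lift $h$ to an element $\tilde h \in \GL_2^+(\F_l)$ (possible since $H \subseteq \PSL_2(\F_l)$, by Lemma \ref{lemma}), and analyse $h$ according to whether its lift is semisimple or not. Since $l \nmid \ord(h)$, the element $\tilde h$ is semisimple, so it is diagonalisable over $\overline{\F_l}$; its eigenvalues lie either in $\F_l^\ast$ (the split case) or in $\F_{l^2}^\ast \setminus \F_l^\ast$ (the non-split case). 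The fact that $h$ fixes a point of $\P^1(\F_l)$ forces at least one eigenvector of $\tilde h$ to be $\F_l$-rational, which rules out the non-split case entirely: if both eigenvalues were Galois-conjugate elements of $\F_{l^2} \setminus \F_l$, then neither eigenspace, nor (since the two eigenvalues are distinct) any other line, would be defined over $\F_l$. Hence $\tilde h$ is conjugate in $\GL_2(\F_l)$ to a diagonal matrix $\mathrm{diag}(a,b)$ with $a, b \in \F_l^\ast$.

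Next I would compute the order of the image of $\mathrm{diag}(a,b)$ in $\PSL_2(\F_l)$. In $\PGL_2(\F_l)$ this image has order equal to the multiplicative order of $a/b \in \F_l^\ast$, hence divides $l-1$. To get the sharper bound $\frac{l-1}{2}$, I would use that $h \in \PSL_2(\F_l)$, i.e. that $\mathrm{diag}(a,b)$ has square determinant modulo scalars: one can scale so that $ab$ is a square in $\F_l^\ast$. Writing $d = a/b$, squareness of $ab = b^2 d$ means $d$ is a square, say $d = e^2$; then $\mathrm{diag}(a,b)$ is a scalar multiple of $\mathrm{diag}(e, e^{-1})$, which lies in $\SL_2(\F_l)$, and its image in $\PSL_2(\F_l) = \SL_2(\F_l)/\{\pm I\}$ has order equal to $\ord(e^2) = \ord(e)/\gcd(\ord(e),2)$, which divides $\frac{l-1}{2}$. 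So $\ord(h) \mid \frac{l-1}{2}$, as claimed.

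The main obstacle I anticipate is not any single step but getting the bookkeeping with scalars exactly right: one must be careful that "order of $h$ in $\PSL_2$" is computed after quotienting by \emph{all} scalars and then by $\pm I$, and that the passage from "square determinant up to scalar" to "diagonalisable inside $\SL_2$ up to scalar" genuinely gains the factor of $2$ rather than being an artefact. A clean way to package this is to note that $\PSL_2(\F_l)$, being the determinant-square subgroup of $\PGL_2(\F_l)$ (as defined in Section \ref{sec:prelim}), meets the image of the diagonal torus in the image of $\{\mathrm{diag}(e,e^{-1})\}$, a cyclic group of order $\frac{l-1}{2}$ (since $e$ and $-e^{-1}\cdot(-1)=e^{-1}$... more carefully, $\mathrm{diag}(e,e^{-1})$ and $\mathrm{diag}(e',e'^{-1})$ have the same image in $\PGL_2$ iff $e' = \pm e^{\pm 1}$), so every element of $H$ lying in this torus has order dividing $\frac{l-1}{2}$; combined with the non-split case being excluded, this finishes the proof.
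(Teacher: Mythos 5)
Your proof is correct, but it reaches the crucial factor of $2$ by a genuinely different mechanism than the paper. Both arguments share the first half: the hypothesis that $h$ fixes a point of $\P^1(\F_l)$, together with $l\nmid\ord(h)$ from Sutherland's lemma, forces a lift $\tilde h$ to be diagonalisable over $\F_l$ --- you see this via rationality of eigenvalues, the paper via Fact~\ref{classical} and the split/non-split dichotomy for Cartan subgroups. From there the paper counts orbits: every non-identity power of $h$ has exactly two fixed points, so Burnside gives $s=2+\frac{l-1}{r}$ orbits for $r=\ord(h)$, and the parity of $s$ is pinned down by the fact that the sign of $h$ as a permutation of $\P^1(\F_l)$ equals the quadratic character of $\det\tilde h$, which is $+1$ since $h\in\PSL_2(\F_l)$; hence $\frac{l-1}{r}$ is even. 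You instead note that $\ord(h)$ equals the multiplicative order of the eigenvalue ratio $a/b=ab/b^{2}$, which is a square because $\det\tilde h=ab$ is, and therefore lies in the unique subgroup of $\F_l^{\ast}$ of order $\frac{l-1}{2}$. The two mechanisms are equivalent ($r\mid\frac{l-1}{2}$ iff $a/b$ is a square iff the number of free orbits is even), but yours is more self-contained, needing neither the orbit count nor the permutation-sign identity; the paper's version has the mild advantage that the same orbit-counting template is reused for the converse direction of Proposition~\ref{prop:ell-congruence}, where one must handle elements with \emph{no} fixed points. Your closing worry about scalar bookkeeping is unfounded: since $\PSL_2(\F_l)$ is by definition a subgroup of $\PGL_2(\F_l)$, the order of $h$ is unambiguous, and the detour through $\mathrm{diag}(e,e^{-1})\in\SL_2(\F_l)$ can be replaced by the one-line observation that a square in $\F_l^{\ast}$ has order dividing $\frac{l-1}{2}$.
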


\begin{proof}
Write $H' := \langle h\rangle$, a cyclic group of order $r$ say, prime
to $l$. By Fact~\ref{classical}, the pullback $G'$ of $H'$ to
$\GL_2(\F_l)$ is contained in a Cartan subgroup. If this Cartan
subgroup were non-split, then the elements of $G'$ would \emph{not} be
diagonalisable, and hence $h$ would not fix an element of
$\P^1(\F_l)$, contradicting the Hasse assumption. Thus the Cartan
subgroup must be split, so the elements of $G'$ are diagonalisable,
and thus $h$ has two fixed points; the same is true for every
non-identity element of $H'$. We now apply the orbit counting lemma to
$H'$:
\begin{equation}
\label{orbit}
s := |\P^1(\F_l)/H| = 2 + \frac{l-1}{r}.
\end{equation} 

Note that this formula says that there are $\frac{l-1}{r}$ non-trivial orbits of $\P^1(\F_l)$ under $h$ (a trivial orbit being a fixed point). The sizes of these $\frac{l-1}{r}$ non-trivial orbits all divide $r$ and sum to $l-1$, and hence they are all equal to $r$. 

We claim that $s$ must be even. This is clear if $r$ is odd, by \eqref{orbit}. If $r$ is even, then \[\mbox{sign}(h) = (-1)^{s-2} = (-1)^s,\] where sign means the sign as a permutation. The key observation, which proves the claim, is that sign$(h)$ must be 1, because it coincides with $\det h$. 

As $s$ must be even, we look finally at \eqref{orbit} to conclude that $r$ must divide $l-1$ with even remainder, and the lemma is proved.
\end{proof}

A part of the previous proof is worth framing, for it explains why the pullback of the dihedral group $D_{2n}$ is contained in the normaliser of a \emph{split} Cartan subgroup.

\begin{lemma}
Let $H \subseteq \PSL_2(\F_l)$ be Hasse, and let $h \in H$. Let $H' := \langle h\rangle$, and let $G'$ be the pullback of $H'$ to $\GL_2(\F_l)$. Then $G'$ is contained in a {split} Cartan subgroup.
\end{lemma}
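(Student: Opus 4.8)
The plan is to isolate the relevant portion of the proof of Lemma~\ref{divides} and rephrase it as a standalone argument, splitting into the trivial and nontrivial cases for~$h$.

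First, suppose $h=1$. Then $H'=\{1\}$, and its pullback $G'$ to $\GL_2(\F_l)$ is precisely the group $\F_l^\ast I$ of scalar matrices, which lies in every Cartan subgroup; choosing a split one, there is nothing to prove. So assume $h\neq1$. Since $H'=\langle h\rangle$ is cyclic and $|H'|$ divides $|H|$, Sutherland's lemma quoted above gives $l\nmid|H'|$, so Fact~\ref{classical} applies to~$H'$ and shows that $G'$ is contained in some Cartan subgroup~$C$. The only thing left is to exclude the possibility that $C$ is non-split.

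Suppose $C$ were non-split. Then $C\cong\F_{l^2}^\ast$, and every non-scalar element of~$C$ has irreducible characteristic polynomial over~$\F_l$, hence no eigenvector in $\F_l^2$ and thus no fixed point on $\P^1(\F_l)$. Now any lift $g\in G'$ of~$h$ is non-scalar — for a scalar lift would force $h$ to be trivial in $\PGL_2(\F_l)$, hence in $\PSL_2(\F_l)$, contrary to $h\neq1$ — so $g$, equivalently its image $h$, fixes no point of $\P^1(\F_l)$. This contradicts the Hasse hypothesis on~$H$, which requires that every element of~$H$, in particular~$h$, fix a point of $\P^1(\F_l)$. Hence $C$ is split, and $G'\subseteq C$ lies in a split Cartan subgroup, as claimed.

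There is essentially no serious obstacle: the argument is a direct consequence of Fact~\ref{classical} together with the fact that non-scalar elements of a non-split Cartan act fixed-point-freely on $\P^1(\F_l)$. The only points requiring a word of care are the degenerate case $h=1$ (where a split Cartan is simply chosen at will) and the observation that a lift to $\GL_2(\F_l)$ of a nontrivial element of $\PSL_2(\F_l)$ is automatically non-scalar, which is exactly what allows the fixed-point contradiction to be invoked.
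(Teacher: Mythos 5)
Your proof is correct and is essentially the paper's own argument: the paper presents this lemma as a re-framing of the corresponding passage in the proof of Lemma~\ref{divides} (Fact~\ref{classical} puts $G'$ in a Cartan subgroup, and the Hasse fixed-point condition rules out the non-split case). Your version is in fact slightly more careful than the paper's, which loosely says the elements of $G'$ "would not be diagonalisable" in the non-split case; your explicit handling of scalars and of the case $h=1$ tightens that without changing the route.
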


Lemma~\ref{divides} implies that the $n$ in $D_{2n}$ divides
$\frac{l-1}{2}$, and also the congruence restrictions for $A_4$, $S_4$
and $A_5$; indeed, since $A_4$ contains elements of order 1, 2 and 3,
we must have that 2 and 3 divide $\frac{l-1}{2}$, or equivalently, $l
\equiv 1\pmod{12}$; the same argument works for $S_4$ and $A_5$. This
proves the forward implication of the group theoretic proposition
above.

We now prove the converse, that is, if $H$ is isomorphic to one of the four subgroups listed above, then it satisfies the Hasse condition. 

The easier thing to prove is that every element $h$ in $H$ fixes a
point of $\P^1(\F_l)$, so we address this first. Suppose, for a
contradiction, that we have $h \in H$ which fixes no point of
$\P^1(\F_l)$, let $r$ be the order of this $h$, and let $s := |\P^1(\F_l)/h|$ be the number of orbits. Proposition 2 of \cite{Drew} says that sign($h$) = $(-1)^s$; in particular, $s$ must be even. Applying the orbit counting lemma to the action of $\langle h\rangle$ on $\P^1(\F_l)$ yields the formula $s = (l+1)/r$, and hence $r$ must divide $\frac{l+1}{2}$. We now do a case-by-case elimination. Suppose first that $H \cong D_{2n}$ with all the other conditions expressed above. The order of any element of this group must divide $\frac{l-1}{2}$. Since no numbers divide both $\frac{l-1}{2}$ and $\frac{l+1}{2}$, we obtain the desired contradiction. The argument in the other cases is similar.

We are left with proving that, in the four cases, no point of
$\P^1(\F_l)$ is fixed by the whole of $H$. This follows from the
following well-known fact from group theory; see for example Theorem 80.27 in \cite{CurtisReiner}.

\begin{lemma}
\label{Mackey}
Let $G$ be a group, $S$ a transitive left $G$-set, and $H$ a subgroup of $G$. Denote by $H\backslash S$ the set of orbits of $S$ under $H$. Let $B$ denote the $G$-stabiliser of any point of $S$. Then we have an isomorphism of $H$-sets
\[ S \cong \bigsqcup_{g}H/(H\cap B^g),\]
where $g$ runs over a set of double coset representatives for $H\backslash G/B$; here we regard the $S$ on the left as an $H$-set.
\end{lemma}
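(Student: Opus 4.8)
The plan is to reduce immediately to the case $S = G/B$ and then decompose the restricted $H$-action orbit by orbit; this is the standard Mackey double-coset decomposition. First, since $S$ is transitive, fix a point $s_0\in S$ and let $B = \mathrm{Stab}_G(s_0)$ be its $G$-stabiliser. The orbit map $g\mapsto g\cdot s_0$ descends to $G/B\to S$, $gB\mapsto g\cdot s_0$, which is well-defined and injective because $gB = g'B \iff g^{-1}g'\in B \iff g\cdot s_0 = g'\cdot s_0$, and surjective by transitivity; it is visibly $G$-equivariant. So it suffices to prove the statement for $S = G/B$ with its left translation action, restricted to $H$.

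Second, I would identify the $H$-orbits on $G/B$. Two cosets $gB$ and $g'B$ lie in the same $H$-orbit precisely when $g'\in HgB$, so the set of $H$-orbits on $G/B$ is in canonical bijection with the set of double cosets $H\backslash G/B$. Fixing a set $\{g\}$ of double-coset representatives, we obtain a decomposition into $H$-subsets $G/B = \bigsqcup_g H\cdot(gB)$.

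Third, for a fixed representative $g$ I compute $\mathrm{Stab}_H(gB)$: one has $h\cdot gB = gB \iff g^{-1}hg\in B \iff h\in gBg^{-1}$, so $\mathrm{Stab}_H(gB) = H\cap gBg^{-1} = H\cap B^g$ (with the convention $B^g = gBg^{-1}$). Hence the orbit map $h\mapsto h\cdot gB$ induces an isomorphism of $H$-sets $H/(H\cap B^g)\xrightarrow{\sim} H\cdot(gB)$, and taking the disjoint union over all representatives $g$ gives $S\cong G/B\cong\bigsqcup_g H/(H\cap B^g)$ as $H$-sets, as required.

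The only point that needs care — and the one place the argument could go wrong if one is careless — is the bookkeeping of conventions: on which side $H$ acts, left versus right cosets, and whether $B^g$ denotes $gBg^{-1}$ or $g^{-1}Bg$; with left cosets and the left translation action the stabiliser computation forces $B^g = gBg^{-1}$, and under the opposite conjugation convention one simply re-indexes the representatives by $g\mapsto g^{-1}$. There is no genuine obstacle here; the whole argument is a routine verification, and it could equally well be cited directly, as in Theorem 80.27 of \cite{CurtisReiner}.
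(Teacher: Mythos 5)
Your proof is correct and complete: the reduction to $S\cong G/B$, the identification of $H$-orbits on $G/B$ with double cosets $HgB$, and the orbit--stabiliser computation $\mathrm{Stab}_H(gB)=H\cap gBg^{-1}$ together give exactly the stated decomposition, and your remark about the $B^g=gBg^{-1}$ versus $g^{-1}Bg$ convention (re-indexing by $g\mapsto g^{-1}$, which carries representatives of $H\backslash G/B$ to representatives of $B\backslash G/H$) disposes of the only notational pitfall. The paper itself gives no proof of this lemma, citing Theorem 80.27 of Curtis--Reiner instead; your argument is the standard one behind that citation, so there is nothing further to compare.
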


This allows us to prove that, in the four cases, there is no point of
$\P^1(\F_l)$ fixed by all of $H$.  We apply the lemma with $G =
\PSL_2(\F_l)$, $S = \P^1(\F_l)$, and $B$ the stabiliser of $\infty$,
that is, the Borel subgroup. By the lemma, an orbit of size~$1$
corresponds to a double coset representative $g$ for which $H
\subseteq B^g$. But this inclusion is impossible, since each $H$
contains $D_4$ and $B$ does not. This finishes the proof.

\section{Proof of Proposition \ref{P4}}\label{sec:pf-prop}
Let $E/\Q(\sqrt{l})$ be a non-dihedral Hasse at $l$ curve. Then $l
\equiv 1\pmod{12}$, $\pmod{24}$ or~$\pmod{60}$, according as the
projective image of~$\bar{\rho}_{E,l}$ is $A_4$, $S_4$ or~$A_5$, by
Proposition~\ref{prop:ell-congruence}. However, there is the following
general result of David regarding the projective mod-$p$ image, for
which we are grateful to {Nicolas Billerey} for bringing to our
attention.  For $F$ a number field, and $p$ a prime, let
\[ e_p := \max_\mp\left\{e_\mp\right\},\]
where $e_\mp$ denotes the ramification index of the prime $\mp | p$.

\begin{fact}\cite[Lemme 2.4]{David}
For an elliptic curve defined over a number field $F$, the projective
mod-$p$ image contains an element of order at least~$\frac{p-1}{4e_p}$.
\end{fact}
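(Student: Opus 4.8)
The plan is to produce the required element already inside the image of an inertia group above~$p$. Fix a prime~$\mp$ of~$F$ with $e_\mp=e_p$ and an inertia group $I_\mp\subseteq G_F$ at~$\mp$; the image of~$I_\mp$ in~$\PGL_2(\F_p)$ is a subgroup of the projective mod-$p$ image, so it suffices to find an element of order at least $\frac{p-1}{4e_p}$ in the image of~$I_\mp$ under~$\bar{\rho}_{E,p}$ modulo scalars. We may assume the claimed bound exceeds~$1$, i.e.\ $4e_p<p-1$; in particular $p\ge7$ and $\mu_p\not\subseteq F$ (otherwise $\Q(\mu_p)\subseteq F$ would force $e_p\ge p-1$). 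I would work locally over~$F_\mp$ and split according to the reduction type of~$E$ at~$\mp$. Two elementary facts will be used repeatedly: an upper-triangular matrix over~$\overline{\F}_p$ whose diagonal entries $a,b$ satisfy $a\neq b$ is conjugate to $\mathrm{diag}(a,b)$, hence has projective order $\ord(a/b)$; and $\bar{\chi}_p$, the mod-$p$ cyclotomic character, restricted to~$I_\mp$ has image of order $e\bigl(F_\mp(\mu_p)/F_\mp\bigr)=(p-1)/\gcd(p-1,e_\mp)\ge(p-1)/e_p$.

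If $E$ has potentially multiplicative reduction, the theory of the Tate curve shows that $\bar{\rho}_{E,p}|_{I_\mp}$ is upper-triangular with the ratio of its two diagonal characters equal to $\bar{\chi}_p|_{I_\mp}$ (the quadratic twist possibly needed to reach a Tate curve disappears on taking the ratio, and is in any case trivial on inertia if unramified). Choosing $\gamma\in I_\mp$ with $\bar{\chi}_p(\gamma)$ of maximal order, the matrix $\bar{\rho}_{E,p}(\gamma)$ has distinct diagonal entries and projective order $\ord(\bar{\chi}_p(\gamma))\ge(p-1)/e_p>\frac{p-1}{4e_p}$, and we are done.

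If $E$ has potentially good reduction, let $L/F_\mp$ be the minimal extension over which $E$ acquires good reduction. The ramified part of $\Gal(L/F_\mp)$ embeds into $\Aut(\tilde{E})$, which for $p\ge5$ has order dividing~$4$ unless $j(E)=0$. Assuming for now $j(E)\neq0$, we get $e(L/F_\mp)\mid4$, hence $e_L:=e(L/\Q_p)=e(L/F_\mp)\,e_\mp\le4e_p<p-1$. Since $E[p]$ extends to a finite flat group scheme over~$\mathcal{O}_L$, Raynaud's theorem bounds the characters of~$I_L$ on $E[p]\otimes\overline{\F}_p$ by products of fundamental characters with exponents at most~$e_L$. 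In the ordinary case the connected-\'etale sequence $0\to\mu_p\to E[p]\to\Z/p\to0$ over~$\mathcal{O}_L$ shows that $I_L$ acts through $\bar{\chi}_p$ and~$1$, so the image of a tame generator of~$I_L$ is upper-triangular with diagonal ratio $\bar{\chi}_p|_{I_L}$, of order $\ge(p-1)/e_L$. In the supersingular case $I_L$ acts irreducibly via a level-$2$ fundamental character~$\psi$, with eigenvalues $\psi^{a+bp},\psi^{b+ap}$ for integers $0\le a,b\le e_L$ with $a\neq b$ (the two being distinct since the action is irreducible); their ratio $\psi^{(a-b)(1-p)}$ has order $(p+1)/\gcd(p+1,a-b)\ge(p+1)/e_L$. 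Either way $\bar{\rho}_{E,p}(I_\mp)$ contains an element of projective order at least $(p-1)/e_L\ge\frac{p-1}{4e_p}$. It remains to treat $j(E)=0$: then $E$ has complex multiplication by~$\Q(\sqrt{-3})$, so $\bar{\rho}_{E,p}(G_F)$ lies in the normaliser of a Cartan subgroup~$C$, split if $p\equiv1\pmod3$ and non-split otherwise, and an analysis of the Hecke character attached to~$E$ shows that its image meets~$C$ in a subgroup projecting onto $C$ modulo scalars (a cyclic group of order $p+1$ or $p-1$) with index bounded independently of~$p$; this again produces an element of projective order at least $\frac{p-1}{4e_p}$.

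The main obstacle is the potentially good supersingular case: one must invoke the precise description of the $I_L$-action on~$E[p]$ in terms of Raynaud's (equivalently Serre's) fundamental characters, together with the bound on their exponents, which is valid exactly when $e(L/\Q_p)\le p-1$, precisely the range in which the asserted inequality is not already vacuous; and one must then check that passing from the semisimplification back to the true representation does not lower the projective order, which it does not, since in each case the relevant eigenvalue ratio is non-trivial. Keeping track of the residue degree of~$\mp$ (which only enlarges the fundamental characters, hence only helps) and disposing cleanly of the CM case $j(E)=0$ are the remaining points requiring care.
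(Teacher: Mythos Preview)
The paper does not itself prove this statement: it is quoted as a Fact from \cite[Lemme~2.4]{David} and applied as a black box in Section~\ref{sec:pf-prop}. There is therefore no proof in the paper against which to compare yours.

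Your strategy---exhibit a large-order element in the image of an inertia group above~$p$, splitting on reduction type---is the right one, and is essentially how Serre~\cite{SerreGal} and David proceed. But your potentially-good case contains a genuine error. You claim that for $p\ge5$ and $j(E)\ne0$ the ramification $e(L/F_\mp)$ divides~$4$, because inertia in $\Gal(L/F_\mp)$ embeds in~$\Aut(\tilde E)$. However $|\Aut(\tilde E)|$ is controlled by $j(\tilde E)$, not by $j(E)$: one has $|\Aut(\tilde E)|=6$ whenever $j(\tilde E)=0$, and this only requires $j(E)\equiv0\pmod{\mp}$, not $j(E)=0$. Thus $e(L/F_\mp)\in\{3,6\}$ can occur for curves with no complex multiplication, and your separate CM treatment does not cover them (that treatment is in any case only a sketch: ``index bounded independently of~$p$'' is not a proof of the precise constant~$4$). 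With $e_L\le6e_p$ rather than $4e_p$, your ordinary-reduction estimate degrades to $(p-1)/(6e_p)$, and the constant genuinely matters here---in the application in this paper it is exactly what forces $l=13$ and excludes all larger primes. Recovering the constant~$4$ requires a sharper analysis: one should describe the image of $I_\mp$ directly rather than first passing to~$I_L$, and combine Raynaud's exponent bounds with the determinant constraint $\det\bar\rho_{E,p}=\bar\chi_p$ (which pins down $a+b$, not merely $0\le a,b\le e_L$), as is done in~\cite{David}.
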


Applying this with $F=\Q(\sqrt{l})$ and $p=l$, we see that
\begin{itemize}
\item
$A_4$ can occur only when $l \le 25$ and $l\equiv1\pmod{12}$, so only for
  $l=13$;
\item
$S_4$ can occur only when $l \le 33$ and $l\equiv1\pmod{24}$, so
  cannot occur;
\item
$A_5$ can occur only when $l \le 41$ and $l\equiv1\pmod{60}$, so
  cannot occur.
\end{itemize}
Thus only $A_4$ is possible, for the prime $l=13$.

\section{The Jacobian of $X_{S_4}(13)$}\label{sec:jacobians}
Over the complex numbers, there are precisely 3 modular curves of level 13 and genus~$3$; they are $\Xsplit(13)$, $\Xnonsplit(13)$, and $X_{S_4}(13)$; see for example the table of \cite{CumminsPauli}. Observe that all of these curves are defined over $\Q$ and are geometrically connected. 

In her recent work \cite{Baran1}, \cite{Baran2}, Burcu Baran proved, in two different ways, that the curves $\Xsplit(13)$ and $\Xnonsplit(13)$ are in fact $\Q$-isomorphic. Her first proof in \cite{Baran1} was computational; she computed models of both curves and showed that they give isomorphic curves. Her second proof was more conceptual, establishing that the Jacobians $\Jsplit(13)$ and $\Jnonsplit(13)$ are isomorphic, with an isomorphism preserving the canonical polarisation of both Jacobians; the Torelli theorem then gives the result. 

The $\Q$-points on $\Xsplit(13)$ have not yet been determined; in fact, as discussed in the final section of \cite{BPR}, $p=13$ is the \emph{only} prime $p$ for which the $\Q$-points on $\Xsplit(p)$ have \emph{not} yet been determined, and Baran's result, linking $\Xsplit(13)$ and $\Xnonsplit(13)$, may give some reason for why this $p=13$ case is so difficult: the determination of $\Q$-points on $\Xnonsplit(p)$ is known to be a difficult problem. 

Another reason for this difficulty is that $\Jsplit(13)(\Q)$ is likely to have Mordell-Weil rank 3, which equals the genus, so the method of Chabauty to determine the rational points does not apply. By likely, we mean that the analytic rank of this Jacobian is 3, so under the Birch-Swinnerton-Dyer conjecture, we would have that the Mordell-Weil rank is also 3.  

The curves $\Xsplit(13)$ and $X_{S_4}(13)$ are \emph{not} isomorphic, even over $\C$; this may be verified using the explicit models of both curves, by computing certain invariants of genus~$3$ curves and observing that they are different -- we are grateful to Jeroen Sijsling for carrying out this computation. 

Nevertheless, their Jacobians are isogenous. 

\begin{proposition}
The Jacobians \emph{$\Jsplit(13)$} and $J_{S_4}(13)$ of the modular curves \emph{$\Xsplit(13)$} and $X_{S_4}(13)$ are $\Q$-isogenous.
\end{proposition}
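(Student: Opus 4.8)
The plan is to show that each of the two Jacobians is $\Q$-isogenous to the same modular abelian variety, namely the $3$-dimensional abelian variety $A_g$ attached to the level-$169$ newform~$g$ of Section~\ref{sec:model}. For $\Xsplit(13)$ this is essentially immediate: as recalled in Section~\ref{sec:prelim}, $\Xsplit(13)$ is $\Q$-isomorphic to $X_0^+(169)=X_0(169)/w_{169}$, and since $X_0(13)$ has genus~$0$ the space $S_2(\Gamma_0(169))$ is spanned by newforms of level exactly~$169$; by the computation in Section~\ref{sec:model} its $w_{169}$-invariant part $S_2(\Gamma_0^+(169))$ is precisely the Galois orbit $\{g,g^{\sigma},g^{\sigma^2}\}$. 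Hence $\Jsplit(13)$ is $\Q$-isogenous to~$A_g$, which is $\Q$-simple of dimension~$3$ with $\Q$-endomorphism algebra $\Q(\zeta_7)^+$.

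For $X_{S_4}(13)$ I would argue through its space of cusp forms. The abelian variety $J_{S_4}(13)$ carries the usual Hecke correspondences (for $p\nmid 13$) over~$\Q$, so up to $\Q$-isogeny it decomposes according to the newforms occurring in $S_2(\widetilde{\Gamma_{A_4}(13)})$; by the analysis in Section~\ref{sec:model} (equivalently, by Baran's irreducibility statements in \cite{Baran1}) this $3$-dimensional space lies inside $V_0\oplus V_1\oplus V_2$, the $\Q$-span of the Galois conjugates of~$g$ together with their twists by the Dirichlet characters $\chi^j$ of conductor~$13$. Consequently the simple factors of $J_{S_4}(13)$, up to $\Q$-isogeny, occur among the modular abelian varieties $A_{g\otimes\chi^j}$. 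A dimension count finishes the identification: if $\chi^j$ has order greater than~$2$ then the Hecke field of $g\otimes\chi^j$ strictly contains $\Q(\zeta_7)^+$, so $A_{g\otimes\chi^j}$ is $\Q$-simple of dimension $6$ or~$12$; the only $\Q$-simple candidates of dimension~$3$ are therefore $A_g$ itself and its quadratic twist $A_{g\otimes\chi_{13}}$ by the character $\chi_{13}$ of $\Q(\sqrt{13})/\Q$ (the case $\chi^j=\chi^6$). Since $J_{S_4}(13)$ has dimension~$3$, it is $\Q$-isogenous to one of these two.

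It remains to exclude the quadratic twist, and for this a single prime suffices: $A_g$ and $A_{g\otimes\chi_{13}}$ become isogenous over $\bar{\Q}$, so their Frobenius traces at a prime~$p$ with $\left(\frac{p}{13}\right)=-1$ differ only in sign. I would pick a small such prime of good reduction for the model~$\CC$, count points of $\CC$ over $\F_p$ and $\F_{p^2}$ to read off the trace of Frobenius on $J_{S_4}(13)$, and check that it equals the trace from $\Q(\zeta_7)^+$ to~$\Q$ of $a_p(g)$ rather than its negative; this forces $J_{S_4}(13)\sim_\Q A_g\sim_\Q\Jsplit(13)$. The step needing the most care is the containment $S_2(\widetilde{\Gamma_{A_4}(13)})\subset V_0\oplus V_1\oplus V_2$ used above --- that is, that the cusp forms of $X_{S_4}(13)$ are built only from twists of the single newform~$g$ --- but this is exactly what the representation-theoretic decomposition of Section~\ref{sec:model} provides. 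Alternatively, one can prove the proposition purely computationally by comparing the $L$-polynomials of the two explicit genus-$3$ models for all primes below a bound determined by their conductors and invoking the isogeny theorem; this is less conceptual but entirely mechanical in \Magma\ or \Sage.
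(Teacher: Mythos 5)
Your proposal is correct, but it takes a genuinely different route from the paper. The paper's proof is group-theoretic: it exhibits (by a \Magma\ computation) an isomorphism of permutation $\Q[\GL_2(\F_{13})]$-modules relating $\Q[G/\Csplus]$ and $\Q[G/\pi^{-1}(S_4)]$, applies the Kani--Rosen/de Smit--Edixhoven functor $\Hom_{\Q[G]}(-,J(13)(R))$ to get a $\Q$-isogeny $J_{S_4}^2\to\Jsplit^2$ after the genus-zero terms drop out, and then uses the $\Q$-simplicity of $\Jsplit(13)$ (Baran) to strip the squares. You instead identify each Jacobian, up to $\Q$-isogeny, with the modular abelian variety $A_g$ attached to the level-$169$ newform $g$: immediately for $\Jsplit(13)\cong\Jac(X_0^+(169))$, and for $J_{S_4}(13)$ via the containment $S_2(\widetilde{\Gamma_{A_4}(13)})\subset V_0\oplus V_1\oplus V_2$ together with a Hecke-field dimension count that leaves only $A_g$ and its quadratic twist $A_{g\otimes\chi^6}$, the latter excluded by a Frobenius-trace computation at an inert prime. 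Both arguments are sound. Your approach has the merit of being more explicit --- it pins down the isogeny class of $J_{S_4}(13)$ as $A_g$ and hence its $L$-function, and it reuses the cusp-form computation of Section~\ref{sec:model} rather than requiring a separate Brauer relation --- but it needs the extra twist-exclusion step (which is genuinely necessary, since the $36$ forms $g^{\sigma^i}\otimes\chi^j$ are pairwise distinct, so $A_{g\otimes\chi^6}$ is \emph{not} $\Q$-isogenous to $A_g$), and it leans on the standard but not entirely trivial fact that the $\Q$-isogeny decomposition of the Jacobian of a Hecke-stable quotient of $X_1(169)$ is read off from the newform orbits occurring in its cotangent space. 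The paper's route avoids both of these points and, as noted in its closing remarks, also yields control on the isogeny degree (a power of $13$), which your argument does not provide.
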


\def\PB{\pi^{-1}}
\begin{proof}
Let $G = \GL_2(\F_{13})$, $B$ the Borel subgroup of $G$, and for $K$
any subgroup of $\PGL_2(\F_{13})$, denote by $\pi^{-1}(K)$ the pullback of
$K$ to $G$. One first verifies (for example in \Magma) that there is a
$\Q[G]$-module isomorphism as follows:

\begin{align}
\label{thnxalx}
&2\Q[G/\Csplus] \oplus \Q[G/\PB(C_{13}\ltimes C_3)] \oplus \Q[G/\PB(C_{13} \ltimes C_4)] \cong \notag\\ &2\Q[G/\PB(S_4)] \oplus \Q[G/\PB(D_{26})] \oplus \Q[G/B].
\end{align}
For $R$ any $\Q$-algebra, apply the contravariant functor
$\Hom_{\Q[G]}(-,J(13)(R))$ to this formula; this yields, by a
well-known method of Kani and Rosen (\cite{KaniRosen}, but see also
\cite{EdixDeSmit}), the following $\Q$-isogeny between Jacobians of
modular curves of level~$13$: 

\begin{equation}
J_{S_4}^2 \oplus J_{\PB(D_{26})} \oplus J_{B} \quad\rightarrow\quad  \Jsplit^2 \oplus J_{\PB(C_{13}\ltimes C_3)} \oplus J_{\PB(C_{13} \ltimes C_4)};
\end{equation}
here we have, for simplicity, denoted the Jacobian of the modular curve $X_H(13)$ simply as $J_H$.

However, as may be checked by computing genera of these curves, most of these terms are zero, leaving us with a $\Q$-isogeny \[ J_{S_4}^2 \to \Jsplit^2.\]
Restricting this isogeny to the first component yields an isogeny between $J_{S_4}$ and its image in $\Jsplit^2$. This image must have dimension 3, and since $\Jsplit$ is simple over $\Q$ (as shown in Section 2 of \cite{Baran2}), the image is isogenous to $\Jsplit$. 
\end{proof}

\begin{remark}
One may still wonder whether $\Jsplit$ is isomorphic to $J_{S_4}$ or not. They are indeed not isomorphic; for if they were, then the arguments in Section 3 of \cite{Baran2} would apply, and we would conclude that the curves $\Xsplit$ and $X_{S_4}$ were isomorphic, which we know is not true. 
\end{remark}

\begin{remark}
With additional work one may show that there is a $\Q$-isogeny between $\Jsplit$ and $J_{S_4}$ of degree a power of 13, and furthermore that 13 must divide the degree of any isogeny. 
\end{remark}
\section{The evidence for Conjecture \ref{Co1}}\label{sec:evidence}
There can be no Hasse at $11$ curve over $\Q(\sqrt{-11})$, because $11$ is not congruent to~$1\pmod 4$ (see Proposition~\ref{P3}). Thus, we let $K$ be any other quadratic field. Sutherland's result (Proposition~\ref{Suther}) tells us that, if $E/K$ is a Hasse at $11$ curve over $K$, then it corresponds to a $K$-point on the modular curve $\Xsplit(11)$. A model for this curve, as well as an expression for the $j$-map $\Xsplit(11) \overset{j}\longrightarrow X(1)$, may be computed along the lines of that for $X_{S_4}(13)$; we obtain a singular projective model \[\Xsplit(11) : y^2 = 4X^6 - 4X^4 - 2X^3 + 2X^2 + \frac{3}{2}X + \frac{1}{4}.\] We used \Magma\ to search for $K$-points on this curve, for every quadratic field with absolute discriminant up to $10^7$, and evaluated the $j$-map at these points, giving many potential $j$-invariants of Hasse at $11$ curves over quadratic fields. 

Given such a $j$-invariant $j_0 \in K$, we considered the polynomial
$\Phi_{11}(X,j_0) \in K[X]$, that is, the classical modular polynomial
at~$11$, evaluated at $Y=j_0$.

\begin{proposition}
The pair $(11,j_0)$ is exceptional for $K$ if and only if
$\Phi_{11}(X,j_0) \in K[X]$ satisfies the following:
\begin{itemize}
\item
It has no linear factor over $K$;
\item
Modulo every prime $\mp$ in a density one set, it has a linear factor.
\end{itemize}
\end{proposition}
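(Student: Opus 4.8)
The plan is to translate both conditions in the statement into properties of the fibres over~$j_0$ of the modular covering $X_0(l)\to X(1)$, by means of the moduli interpretation of the classical modular polynomial~$\Phi_l$; I argue for an arbitrary odd prime~$l$, the Proposition being the case $l=11$. Throughout, $j_0\in K$ with $j_0\neq 0,1728$, which holds automatically for an exceptional pair. Since $j_0\neq 0,1728$ there is an elliptic curve $E/K$ with $j(E)=j_0$, and $\Phi_l(X,j_0)\in K[X]$ is monic and separable of degree $l+1$; its $l+1$ roots in~$\bar K$ are exactly the values $j(E/C)$ as~$C$ runs through the $l+1$ cyclic subgroups of order~$l$ of $E[l]$. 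The assignment $C\mapsto j(E/C)$ is a bijection onto this root set, and it is $G_K$-equivariant: as $E$ is defined over~$K$, one has $\sigma\big(j(E/C)\big)=j\big(E/\bar{\rho}_{E,l}(\sigma)C\big)$ for all $\sigma\in G_K$, while $G_K$ acts on the set of cyclic order-$l$ subgroups through the composite of~$\bar{\rho}_{E,l}$ with $\GL_2(\F_l)\to\PGL_2(\F_l)$. Hence a root of $\Phi_l(X,j_0)$ lies in~$K$ if and only if the corresponding subgroup~$C$ is $G_K$-stable, equivalently if and only if $E\to E/C$ is a $K$-rational $l$-isogeny; so $\Phi_l(X,j_0)$ has a linear factor over~$K$ if and only if some --- equivalently, since this depends only on the $j$-invariant, every --- elliptic curve over~$K$ with $j$-invariant~$j_0$ admits a $K$-rational $l$-isogeny.

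The second step runs the same argument over residue fields. Because $\Phi_l\in\Z[X,Y]$, for every prime~$\mp$ of~$K$ not dividing the denominator of~$j_0$ we have $\Phi_l(X,j_0)\bmod\mp=\Phi_l(X,j_0\bmod\mp)$. Discard the finitely many primes~$\mp$ dividing~$l$, of bad reduction for~$E$, dividing the denominator of~$j_0$, or with $j_0\bmod\mp\in\{0,1728\}$; what remains is a set of primes of density one. For such~$\mp$ the covering $X_0(l)\to X(1)$ is \'etale over $j_0\bmod\mp$, since its only branch points are $0$, $1728$ and~$\infty$ and~$l$ is invertible in~$\F_\mp$; thus $\Phi_l(X,j_0\bmod\mp)$ is separable of degree $l+1$ and the same $\Gal(\overline{\F_\mp}/\F_\mp)$-equivariant bijection identifies its roots with the cyclic order-$l$ subgroups of $\tilde{E}_\mp[l]$. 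Exactly as before, $\Phi_l(X,j_0)\bmod\mp$ has a linear factor over~$\F_\mp$ if and only if $\tilde{E}_\mp$ admits an $\F_\mp$-rational $l$-isogeny.

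Finally I would combine these two steps with the definition of an exceptional pair recalled in the Introduction: $(l,j_0)$ is exceptional for~$K$ precisely when no elliptic curve over~$K$ with $j$-invariant~$j_0$ admits a $K$-rational $l$-isogeny, while for such a curve~$E$ the reductions~$\tilde{E}_\mp$ admit an $\F_\mp$-rational $l$-isogeny for a set of primes~$\mp$ of density one (the primes of good reduction not dividing~$l$ already forming such a set). By the first step the former is the condition that $\Phi_l(X,j_0)$ has no linear factor over~$K$, and by the second step --- using that altering a set of density one by a finite set preserves this property --- the latter is the condition that $\Phi_l(X,j_0)$ has a linear factor modulo every prime in some set of density one. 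Specialising to $l=11$ gives the Proposition.

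The two points needing care are the $G_K$-equivariance of the correspondence between the roots of $\Phi_l(X,j_0)$ and the cyclic $l$-subgroups of $E[l]$, and the bookkeeping of which finitely many primes must be excluded so that the local hypothesis genuinely concerns a set of density one; the remaining ingredients --- existence of a curve over~$K$ with prescribed $j$-invariant, separability of $\Phi_l(X,j_0)$ for $j_0\neq 0,1728$, and $X_0(l)\to X(1)$ being \'etale off its branch locus --- are standard. I do not expect a real obstacle here: the argument is just the reformulation, in terms of the modular polynomial, of the local-global criterion that is expressed elsewhere in the paper through $K$-points on~$\Xsplit(l)$.
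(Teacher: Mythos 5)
Your overall strategy is the same as the paper's: reduce both bullets to the statement that, over a field $F$ of characteristic prime to $l$, the polynomial $\Phi_l(X,j(E))$ has a linear factor over $F$ if and only if $E$ admits a cyclic $F$-rational $l$-isogeny, applied once with $F=K$ and once with $F=\F_\mp$. The paper simply quotes this equivalence as a theorem of Igusa and is done in one line; you instead try to prove it from the moduli interpretation of $\Phi_l$, and that is where there is a genuine gap.

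The gap is your claim that $C\mapsto j(E/C)$ is a \emph{bijection} onto the root set of $\Phi_l(X,j_0)$ (equivalently, that $\Phi_l(X,j_0)$ is separable). What is true in general is only the identity $\Phi_l(X,j_0)=\prod_C\bigl(X-j(E/C)\bigr)$, i.e.\ a Galois-equivariant surjection from cyclic subgroups onto roots; it fails to be injective exactly when two distinct subgroups $C_1\neq C_2$ give $E/C_1\cong E/C_2$, which forces an extra endomorphism of $E$ of degree $l^2$. So injectivity fails precisely at CM $j$-invariants over $K$, and it fails routinely over the residue fields $\F_\mp$, where every reduction $\tilde E_\mp$ has complex multiplication or is supersingular (the collisions are visible in the structure of $l$-isogeny volcanoes and at supersingular points). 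Your appeal to \'etaleness of $X_0(l)\to X(1)$ over $j_0$ does not repair this: \'etaleness gives $l+1$ distinct points of $X_0(l)$ in the fibre, but the roots of $\Phi_l(X,j_0)$ are the images of those points under the \emph{other} degeneracy map $j'$, and the plane model $\Phi_l=0$ is singular (two branches cross) exactly where $j'$ fails to separate fibre points. Without injectivity, the implication you need in both bullets — a root in $F$ forces a $G_F$-stable subgroup — does not follow: a Galois-fixed root could a priori have as preimage a set of several subgroups permuted nontrivially by Galois. Closing this gap in full generality is precisely the content of Igusa's theorem; either cite it, as the paper does, or restrict to non-CM $j_0$ over $K$ and supply a separate argument (Deuring lifting/volcano structure) for a density-one set of $\mp$, neither of which is in your write-up. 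The easy direction (rational subgroup $\Rightarrow$ rational root) and your bookkeeping of excluded primes are fine.
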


\begin{proof}
This is a direct consequence of the fact, proved by Igusa in \cite{Igusa}, that, for an elliptic curve $E$ over any field $F$, and an integer $N$ with char $F \nmid N$, the existence of a cyclic $F$-rational $N$-isogeny on $E$ is equivalent to $\Phi_N(X,j(E)) \in F[X]$ having a linear factor.
\end{proof}

We found that, for all of our potential $j$-invariants,
$\Phi_{11}(X,j_0)$ had many reductions with no linear factor -- too many to be of density zero. This suggested to us that Conjecture~\ref{Co1} should be true.

The results of Serre's paper \cite{SerreGal} imply the following.

\begin{proposition}
Let $K$ be a quadratic field. If $E/K$ is Hasse at 11, then either
\begin{itemize}
\item
$11$ ramifies in $K$; or
\item
$E$ has additive reduction at all  places $v$ of $K$ dividing~$11$.
\end{itemize}
\end{proposition}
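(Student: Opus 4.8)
The plan is to argue by contradiction, analysing the restriction of $\bar\rho_{E,11}$ to a decomposition group at a prime $v\mid 11$ and comparing Serre's description of its local shape \cite{SerreGal} with the global constraint $H_{E,11}\cong D_{10}$. First one disposes of $K=\Q(\sqrt{-11})$: since $11\not\equiv1\pmod4$, Proposition~\ref{P3} shows there is no Hasse at $11$ curve over $\Q(\sqrt{-11})$, so the assertion is vacuous in that case. Hence we may assume $K\neq\Q(\sqrt{-11})$, so $\sqrt{11^\ast}=\sqrt{-11}\notin K$ and Proposition~\ref{Suther} applies: $H_{E,11}\cong D_{2n}$ with $n>1$ an odd divisor of $\frac{11-1}{2}=5$, forcing $H_{E,11}\cong D_{10}$, with $G_{E,11}$ contained in (a conjugate of) the normaliser $\Csplus$ of a split Cartan subgroup $\Cs$. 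Now assume $11$ does not ramify in $K$; we must show $E$ has additive reduction at each $v\mid 11$, i.e. neither good nor multiplicative reduction.

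I would set up three facts. (i) Since $11$ is unramified in $K$ we have $e_v=1$, so $K_v(\mu_{11})/K_v$ is totally ramified of degree $10$ and the mod-$11$ cyclotomic character $\chi=\det\bar\rho_{E,11}$ restricts to a surjection $I_v\twoheadrightarrow\F_{11}^\ast$; in particular $\det$ maps $\bar\rho_{E,11}(I_v)$ onto $\F_{11}^\ast$. (ii) $11\nmid|G_{E,11}|$: indeed $G_{E,11}$ is the pullback of $D_{10}$ to $\GL_2(\F_{11})$, of order $10\cdot(11-1)=100$ (equivalently, use Sutherland's lemma $l\nmid|H|$ together with $l\nmid l-1$). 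Consequently $\bar\rho_{E,11}|_{I_v}$ factors through the tame quotient of $I_v$, which is pro-cyclic, so $\bar\rho_{E,11}(I_v)$ is cyclic. (iii) A cyclic subgroup of $\Csplus$ is either contained in $\Cs$ or generated by an anti-diagonal element $M$, in which case $M^2$ is a scalar matrix.

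Then I would split according to the reduction type of $E/K_v$. If $E$ has good ordinary or multiplicative reduction at $v$, then by \cite{SerreGal} (the connected--\'etale sequence, respectively the theory of the Tate curve) $E[11]$ restricted to $G_{K_v}$ is reducible, with $I_v$ acting on the distinguished sub through $\chi$ and on the quotient through an unramified, hence $I_v$-trivial, character; since $\bar\rho_{E,11}(I_v)$ is cyclic and the "action on the sub" map sends it onto $\F_{11}^\ast$, a chosen generator has eigenvalues $a_0,1$ with $a_0$ a primitive root mod $11$, hence distinct, so $\bar\rho_{E,11}(I_v)$ is conjugate to $\langle\mathrm{diag}(a_0,1)\rangle$ and its image in $\PGL_2(\F_{11})$ is cyclic of order $10$. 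But $D_{10}$ has no element of order $10$, a contradiction. If $E$ has good supersingular reduction at $v$, then by \cite{SerreGal} $\bar\rho_{E,11}|_{I_v}$ is given by a fundamental character of level $2$, hence acts irreducibly; since $\Cs$ acts reducibly, $\bar\rho_{E,11}(I_v)$ must be generated by an anti-diagonal $M=\left(\begin{smallmatrix}0&b\\c&0\end{smallmatrix}\right)$ with $M^2=bc\cdot I$ and $bc$ a quadratic non-residue mod $11$ (otherwise $M$ would have eigenvalues in $\F_{11}$). Then $\det\bar\rho_{E,11}(I_v)=\langle\det M\rangle=\langle-bc\rangle$; but $-1$ is a non-residue mod $11$ because $11\equiv3\pmod4$, so $-bc$ is a square, whence $\langle-bc\rangle\subseteq(\F_{11}^\ast)^2\subsetneq\F_{11}^\ast$, contradicting (i). In all cases $E$ has additive reduction at $v$, proving the proposition.

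The main obstacle is marshalling the precise local input from \cite{SerreGal}: one needs the exact form of $\bar\rho_{E,l}$ on a decomposition (and inertia) group at $l$ in the good-ordinary, good-supersingular, and multiplicative cases. One should also note that it is exactly the congruence $11\equiv3\pmod4$ (equivalently $-1\notin(\F_{11}^\ast)^2$) that eliminates the supersingular case — the ordinary/multiplicative case uses only $e_v=1$ — and verify the elementary group-theoretic claims (iii) about cyclic subgroups of $\Csplus$ and (ii) about $11\nmid|G_{E,11}|$.
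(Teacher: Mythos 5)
Your proof is correct and follows essentially the same route as the paper's: the paper simply cites Serre's description (Section 4 of \cite{SerreGal}) of the image of inertia at a place $v\mid 11$ of good or multiplicative reduction when $11$ is unramified, and asserts that in every case this image is incompatible with the projective image being $D_{10}$. Your write-up supplies the details of that incompatibility (projective order $10$ in the ordinary/multiplicative case, the determinant/irreducibility argument in the supersingular case), all of which check out.
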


\begin{proof}
If 11 is unramified in $K$ and $E$ has a place $v$ of good or multiplicative reduction above 11, then the results of \cite{SerreGal} (see in particular Section 4) gives the image of the inertia subgroup at $v$ of $G_K$ under $\bar{\rho}_{E,11}$, which in all cases is incompatible with the projective image being isomorphic with $D_{10}$. 
\end{proof}

We can also say, by part (a) of Proposition~\ref{Suther}, that $E/K$
is Hasse at 11 if and only if $H_{E,11} \cong D_{10}$, and so
corresponds to a $K$-point on the modular curve $X_{D_{10}}(11)$
parametrising such elliptic curves. This modular curve is the
$\Q(\sqrt{-11})$-twist of the more usual modular curve $X_0(121)$,
which, by Theorem 4.9 of \cite{Bars}, has only finitely many quadratic
points. Thus, we can say that there are only finitely many quadratic
fields over which a Hasse at 11 curve might exist. If we could
determine exactly which quadratic fields $K$ arise for the twist
$X_{D_{10}}(11)$ of $X_0(121)$, we could prove the conjecture by
determining the $K$-points on $\Xsplit(11)$, find the finite list of
potential $j$-invariants, and show that none of them yield $H_{E,5}
\cong D_{10}$ (this last step can be established by recent work of
Sutherland, who has implemented an algorithm to determine the mod-$p$
Galois image of any elliptic curve over any number field). The methods
of Freitas, Le~Hung and Siksek, who determine the quadratic points on
$X_0(N)$ for certain $N$ in \cite{FreitasSiksek}, may be of use to us
here.


\newcommand{\etalchar}[1]{$^{#1}$}

\end{document}